\setlist[enumerate]{leftmargin=*}
\newcommand\be{\begin{equation}}
\newcommand\ee{\end{equation}}
\def\no[#1]{\| #1\|}
\def\dd[#1,#2]{\frac{#1}{#2}}
\def\peter[#1]{\langle #1\rangle}
\def\sno[#1]{\no[\mathcal{D}.#1]}
\newcommand{\Q}{\mathbb{Q}}
\newcommand{\R}{\mathbb{R}}
\newcommand{\Z}{\mathbb{Z}}
\newcommand{\G}{\mathbf{G}}
\newcommand{\SL}{\mathrm{SL}}
\newcommand{\SO}{\mathrm{SO}}
\newcommand{\lieg}{\mathfrak{g}}
\newcommand{\liea}{\mathfrak{a}}
\newcommand{\lieu}{\mathfrak{u}}
\newcommand{\supp}{\mathrm{supp}}
\newcommand{\vol}{\mathrm{vol}_H}
\newcommand{\diag}{\mathrm{diag}}
\newcommand{\e}{\epsilon}
\newcommand{\vp}{\varphi}
\newcommand{\bv}{\boldsymbol{v}}
\newcommand{\bx}{\boldsymbol{x}}
\newcommand{\by}{\boldsymbol{y}}
\newcommand{\lb}{\left\lbrace}
\newcommand{\rb}{\right\rbrace}
\newcommand{\al}{\left\langle}
\newcommand{\ar}{\right\rangle}
\newcommand{\ra}{\rightarrow}
\newcommand{\mA}{\mathcal{A}}
\newcommand{\mC}{\mathcal{C}}
\newcommand{\mE}{\mathcal{F}}
\newcommand{\dint}{\displaystyle\int}
\newcommand{\dsum}{\displaystyle\sum}
\newcommand{\ol}{\overline}
\newcommand{\mU}{\mathcal{U}}
\newcommand{\mM}{\mathcal{M}}
\newcommand{\mL}{\mathcal{L}}
\theoremstyle{plain}
\newtheorem{thm}{Theorem}
\newtheorem{cor}{Corollary}
\newtheorem{lem}{Lemma}
\newtheorem{prop}{Proposition}
\newtheorem*{remark*}{Remark}
\author{Kathryn Dabbs}
\address{Department of Mathematics \\ Univerisity of Texas \\1 University Station, Austin, TX 78712}
\email{kdabbs@math.utexas.edu}
\author{Michael Kelly}
\address{Department of Mathematics \\ Univerisity of Michigan \\530 Church St., Ann Arbor, MI 48109}
\email{michaesk@umich.edu}
\author{Han Li}
\address{Department of Mathematics and Computer Science \\ Wesleyan Univerisity \\265 Church Street, Middletown, CT 06459}
\email{hli03@wesleyan.edu}
\title[Effective Equidistribution]{Effective equidistribution of translates of \\ maximal horospherical measures in the space of lattices}
\date{\today}
\begin{document}
\vspace*{-.4cm}
\maketitle
\vspace*{-.7cm}
\begin{abstract}
	Recently Mohammadi and Salehi-Golsefidy gave necessary and sufficient conditions under which certain translates of homogeneous measures converge, and they determined the limiting measures in the cases of convergence. The class of measures they considered includes the maximal horospherical measures. In this paper we prove the corresponding effective equidistribution results in the space of unimodular lattices. We also prove the corresponding results for probability measures with absolutely continuous densities in rank two and three. Then we address the problem of determining the error terms in two counting problems also considered by Mohammadi and Salehi-Golsefidy. In the first problem, we determine an error term for counting the number of lifts of a closed horosphere from an irreducible, finite-volume quotient of the space of positive definite $n\times n$ matrices of determinant one that intersect a ball with large radius. In the second problem, we determine a logarithmic error term for the Manin conjecture of a flag variety over $\Q$.
\end{abstract}

\section{Introduction}

Several important and recurring problems in homogeneous dynamics concern the equidistribution properties of closed unipotent orbits. These problems have been studied for many years by many authors, and they are important because of their connections to geometry and number theory. It often happens that one is interested in proving not only an equidistribution result, but also a quantitative bound on the discrepancy of the equidistribution. There are two reasons for this: (1) knowledge of the rate of equidistribution sheds light on the regularity and rigidity of the dynamics; and (2) in applications, particularly in counting problems, effective rates of equidistribution play a fundamental role in determining an error term for any relevant estimates. \newline 
\indent A fundamental example of the equidistribution of closed unipotent orbits is the equidistribution of long closed horocycles in $M=\SO_{2}(\R)\backslash\SL_2(\R)/\SL_2(\Z)$, the modular surface with the Poincar\'e metric. For any $y>0$ there exists a unique closed horocycle $h_y$ in $M$ of length $\tfrac{1}{y},$ and $h_y$ equidistributes in $M$ as $y$ tends to zero. See for example \cite{Dani1978} or \cite{Zagier}. That is, if $\nu_y$ is the probability measure on $M$ that puts uniform mass on $h_y$, then $\nu_y$ converges weakly to the uniform probability measure on $M$. See Sarnak's paper \cite{Sarnak} for a generalization to general non-compact, finite-volume Riemann surfaces. From a dynamical point of view, using the fact that the horocycles $h_{y}$ are geodesic translates of any fixed closed horocycle, one can prove the equidistribution using the mixing properties of the geodesic flow. This idea originates in the thesis of G. Margulis \cite{M}.\newline
\indent The discrepancy estimates for this equidistribution problem are well studied. There are currently two main approaches to obtain such estimates. In the method of Sarnak and Zagier \cite{Sarnak, Zagier} one associates an Eisenstein series to each $\nu_{y}$ and uses the analytic continuation of the Eisenstein series (due to Selberg \cite{Selberg}) to produce an effective rate. Alternatively, using the ideas originating in Margulis's thesis, one can use the spectral gap for $M$ to achieve an effective rate for the equidstribution of the long closed horocycles. It is a well known result of Zagier \cite{Zagier} that the rate of equidistribution is $O(y^{3/4-\epsilon})$ for each $\epsilon>0$ if and only if the Riemann hypothesis is true. He also showed that the rate of equidistribution is at least $o(y^{1/2})$, which is of the same strength as the prime number theorem. \newline
\indent  The story for long closed horocycles in general rank one spaces is similar: the full horocycle is expanded or contracted depending on the direction it is translated. In higher rank, however, the closed horospheres (the closed maximal unipotent orbits) can be simultaneously contracted and expanded as they are translated along a given geodesic. This complication has caught the attention of many mathematicians over the years, and there are many special circumstances for which we know how to control it. For instance, when investigating certain problems concerning Diophantine approximation with weights, Kleinbock and Weiss \cite{KW} proved an equidistribution theorem for the translates of {\it minimal} horospherical measures\footnote{ A horospherical subgroup is the unipotent radical of a proper parabolic subgroup. A horospherical subgroup is {\it minimal} if it is the unipotent radical of a {\it maximal} parabolic subgroup, and it is {\it maximal} if it is the unipotent radical of a {\it minimal} parabolic subgroup.} in the presence of simultaneous expansion and contraction. An effective form of their result was obtained in \cite{KM} and was generalized in the recent paper \cite{KSW}.  \newline 
	\indent Another situation in which we know how to control simultaneous expansion and contraction was the subject of the recent work of Mohammadi and Salehi-Golsefidy \cite{MS}. They provide necessary and sufficient conditions under which translates of certain homogeneous measures (including the maximal horospherical measures) converge, and they determine the limiting measures in the cases of convergence. Similar results can be found in an earlier work of Shah and Weiss \cite{SW}, in which a similar collection of translates is considered. To reiterate: the difficulty one encounters in the higher rank setting is that a closed horospherical orbits can both expand and contract while being translated in a particular direction. This phenomenon makes it difficult to determine the convergence of translates. Once more, it makes it difficult to achieve effective rates of convergence. \newline
	\indent It is our objective in this paper to establish the rates of convergence for the main results in \cite{MS} in the space of unimodular lattices. Our first theorem establishes effective equidistribution for translates of maximal horospherical measures. This result is an analogue of a similar result for minimal horospherical measures originally obtained in an ineffective form in \cite{KW} and effective form in \cite{KM}. Our method of proof closely mirrors that of Kleinbock-Margulis \cite{KM} for the minimal horospherical case.
	
\subsection{Statement of Results} Let $n>2$ be an integer, $G=\SL_{n}(\R)$, $\Gamma=\SL_{n}(\Z)$, and $A$ be the subgroup of $G$ consisting of positive diagonal matrices. For an element $a\in A$ we will use the notation 
	\[
		a=\diag(a_{1},...,a_{n}).
	\]
Let $\Delta=\{ \alpha_{1},...,\alpha_{n-1}\}$ be simple roots of $G$ with respect to $A$ given by 
	\[
		\alpha_{i}(a)= \dfrac{a_{i}}{a_{i+1}},
	\]
 and let $ \lambda_{\alpha_{1}},...,\lambda_{\alpha_{n-1}}$ be the corresponding fundamental weights
 	\[
		\lambda_{\alpha_{i}}(a)=a_{1}\cdots a_{i}.
	\]

For each $E\subset \Delta$, let $P_{E}$ be the associated standard parabolic subgroup (see \cite{Hum}). For example, $P_{\Delta}=G$ and $P_{\emptyset}$ is the group of upper triangular matrices in $G$. Let $Q_{E}$ be the group generated by the one parameter unipotent subgroups of $P_{E}$. The group $Q_{\emptyset}$ is a maximal unipotent subgroup of $G$, and for it we reserve the special notation $U=Q_{\emptyset}$. Finally, we let $\mu_{E}$ be the unique invariant probability measure supported on $Q_{E}\Gamma$ in $G/\Gamma$. We use the notation $\mu=\mu_{\emptyset}$, and set $m=\mu_{\Delta}$ for the $G$-invariant probability measure on $G/\Gamma$.  We will now state a special case\footnote{In \cite{MS} this theorem is proved in much greater generality, e.g. $G$ does not necessarily have to be $\Q$-split.} of the main result of \cite{MS}.


\begin{thm}[{\cite[Theorem 1]{MS}}] \label{MS}
Let $\{a_{k}\}\subset A$ and $E\subset \Delta$. Then 
\begin{enumerate}
	\item If $\lambda_\alpha(a_{k})\ra0$ as $k\ra\infty$ for some $\alpha\not\in E$, then $a_{k}\mu_{E}$ diverges in the space of Borel probability measures on $G/\Gamma$. 
	\item Let $E\subset F\subset \Delta$. If $\lambda_{\alpha}(a_{k})=1$ for any $\alpha\not\in F$ and $\lambda_{\alpha}(a_{k})\ra\infty$ as $k\ra\infty$ for any $\alpha\in F\backslash E$, then $a_{k}\mu_{E}$ converges to $\mu_{F}$ as $k\ra\infty$.
\end{enumerate}
\end{thm}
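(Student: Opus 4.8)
Part (1) is elementary; part (2) is the substantial one.

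For part (1) I would apply Mahler's compactness criterion together with Minkowski's first theorem. Fix a simple root $\alpha_{i}\notin E$ and let $V_{i}=\langle e_{1},\dots,e_{i}\rangle$. Since $Q_{E}\subset P_{E}$ and $P_{E}$ stabilises the partial flag indexed by the simple roots outside $E$, the group $Q_{E}$ stabilises $V_{i}$; and, being generated by unipotent one-parameter subgroups, $Q_{E}$ has no nontrivial character, hence acts on $V_{i}$ with determinant one. Thus for every $q\in Q_{E}$ the lattice $q\Z^{n}\cap V_{i}=q(\Z e_{1}\oplus\cdots\oplus\Z e_{i})$ has covolume one in $V_{i}$. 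The diagonal matrix $a_{k}$ also stabilises $V_{i}$ and scales this covolume by $\det(a_{k}|_{V_{i}})=\lambda_{\alpha_{i}}(a_{k})$, so by Minkowski's theorem the lattice $a_{k}q\Z^{n}\cap V_{i}$ contains a nonzero vector of norm $O(\lambda_{\alpha_{i}}(a_{k})^{1/i})$. Hence if $\lambda_{\alpha_{i}}(a_{k})\to 0$ then \emph{every} lattice in the orbit $a_{k}Q_{E}\Gamma$ acquires arbitrarily short vectors; since the length of a shortest nonzero vector is continuous on $G/\Gamma$, the orbit $a_{k}Q_{E}\Gamma$ eventually leaves every fixed compact set, so $a_{k}\mu_{E}(K)=0$ for $k$ large and $a_{k}\mu_{E}$ escapes to infinity. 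This also exhibits the necessity of the hypothesis.

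For part (2) the first observation is that the hypothesis ``$\lambda_{\alpha}(a_{k})=1$ for all $\alpha\notin F$'' says exactly that $a_{k}\in Q_{F}$: the diagonal elements of $Q_{F}$ are those lying in the semisimple part $[L_{F},L_{F}]$ of the standard Levi $L_{F}$ of $P_{F}$, which—after imposing $\det=1$—is precisely $\{a\in A:\lambda_{\alpha}(a)=1\text{ for all }\alpha\notin F\}$. Consequently $a_{k}\mu_{E}$ is supported in the closed, finite-volume orbit $Q_{F}\Gamma\cong Q_{F}/(Q_{F}\cap\Gamma)$, on which—since $Q_{F}$ is unimodular—$\mu_{F}$ is the \emph{unique} $Q_{F}$-invariant probability measure. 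So the whole problem takes place inside the homogeneous $Q_{F}$-space $Q_{F}\Gamma$: one must show that the $Q_{E}$-invariant probability measures $a_{k}\mu_{E}$ (with $a_{k}\in Q_{F}$ normalising $Q_{E}$, since $A$ normalises every root-generated subgroup) converge to the Haar probability measure $\mu_{F}$.

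I would then run the usual three steps. (i) \emph{Non-divergence:} the family $\{a_{k}\mu_{E}\}$ is uniformly tight; the escape mechanism of part (1) is excluded because $\lambda_{\alpha}(a_{k})\ge 1$ for all $\alpha\notin E$, while the simple roots in $E$ are harmless since $\mu_{E}$ is invariant under the root $\SL_{2}$'s in $[L_{E},L_{E}]$. Quantifying this by the non-divergence estimates for polynomial trajectories (Dani--Margulis, Eskin--Mozes--Shah, or the $(C,\alpha)$-good functions of Kleinbock--Margulis) shows that every weak-$*$ subsequential limit $\nu$ is a probability measure, supported in $Q_{F}\Gamma$ and $Q_{E}$-invariant. (ii) \emph{Upgrading invariance:} $\nu$ is in fact $Q_{F}$-invariant; since $Q_{F}$ is generated by $Q_{E}$ together with the negative simple-root groups $U_{-\alpha}$, $\alpha\in F\setminus E$, it suffices to extract $U_{-\alpha}$-invariance of $\nu$ from the hypothesis $\lambda_{\alpha}(a_{k})\to\infty$. (iii) \emph{Conclusion:} a $Q_{F}$-invariant probability measure on $Q_{F}\Gamma$ equals $\mu_{F}$ by the uniqueness above, so every subsequential limit is $\mu_{F}$ and $a_{k}\mu_{E}\to\mu_{F}$.

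The crux is step (ii), and it is exactly the difficulty emphasised in the introduction: conjugation by $a_{k}$ can simultaneously expand and contract root directions \emph{inside} $Q_{E}$, so the classical ``expanding horospherical measure'' mixing argument does not apply verbatim. My plan to handle it: pass to a subsequence along which every $\beta(a_{k})$ converges in $[0,\infty]$ and split the root groups of $Q_{E}$ into expanding, bounded, and contracting pieces. The bounded and contracting pieces lie in $Q_{E}\subseteq Q_{F}$, so $\nu$ is already invariant under them and they need not be unfolded; the genuinely expanding pieces, together with the hyperbolic part of $a_{k}$, are precisely what $\lambda_{\alpha}(a_{k})\to\infty$ (for $\alpha\in F\setminus E$) supplies—via the exterior-power realisations $\lambda_{\alpha_{i}}\leftrightarrow\wedge^{i}\R^{n}$ and the block structure of $L_{F}$—and one feeds these into a thickening-and-mixing argument (equivalently, an induction on $n$ reducing to the honestly expanding horospherical case and ultimately to the classical equidistribution of long horocycles). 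This is also the point at which the effective theorems of the present paper, following Kleinbock--Margulis, replace steps (ii)--(iii) by a quantitative mixing/spectral-gap estimate that produces an explicit rate of convergence.
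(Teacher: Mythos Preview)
This theorem is not proved in the present paper: it is quoted verbatim from \cite{MS} as background, and the paper's contribution is the \emph{effective} versions (\autoref{main1}, \autoref{main2}, \autoref{main3}). So there is no ``paper's own proof'' of \autoref{MS} to compare against. That said, the paper's proof of the effective analogue \autoref{main2} (which specialises to part (2)) takes a route genuinely different from your sketch. You propose the classical soft scheme: tightness of $\{a_{k}\mu_{E}\}$, then upgrading the $Q_{E}$-invariance of any weak-$*$ limit to $Q_{F}$-invariance, then invoking uniqueness of Haar on $Q_{F}\Gamma$. The paper never passes to subsequential limits or argues about extra invariance. Instead (see \autoref{overview} and \autoref{pf1}) it writes $a=a_{t}b_{t}$ with $a_{t}$ in the interior of the positive Weyl chamber $\mathcal{A}$ and $b_{t}$ still in the convergence cone $\mathcal{C}$, then uses the $U$-invariance of $\mu$ to replace $\int\varphi(a_{t}b_{t}z)\,d\mu(z)$ by $\int\int\xi(u)\varphi(a_{t}u\,b_{t}z)\,d\nu_{U}(u)\,d\mu(z)$, so that the inner integral becomes a genuine \emph{fully expanding} horospherical average to which the Kleinbock--Margulis mixing estimate (\autoref{equi}) applies. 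Non-divergence enters only to control the set where $b_{t}z$ leaves a compact set (\autoref{nondiv2}). For general $E\subset F$ (\autoref{pf2}) the paper decomposes $Q_{F}$ into the $\SL_{k_{j}}$ diagonal blocks and the unipotent block $W_{\mE}$, shows $a$ leaves $\nu_{W_{\mE}}$ invariant (\autoref{Winvar}), and applies \autoref{main1} block by block.

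Your outline is a reasonable roadmap toward the \emph{ineffective} statement and is closer in spirit to the original argument in \cite{MS}. But be aware that your step (ii) is where essentially all the content lies, and as written it has a gap: you propose to extract $U_{-\alpha}$-invariance of the limit $\nu$ from $\lambda_{\alpha}(a_{k})\to\infty$, yet the naive ``conjugation contracts, hence the limit is invariant'' argument would require $\alpha(a_{k})\to\infty$ and would require $\mu_{E}$ itself to be $U_{-\alpha}$-invariant, neither of which holds. Since $a_{k}$ normalises $Q_{E}$, every limit $\nu$ is a priori only $Q_{E}$-invariant, and getting beyond that needs either Ratner/linearization (as in \cite{MS}, \cite{SW}) or the factorisation-and-mixing trick above. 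The paper's method buys an explicit rate; your scheme, once completed via measure classification, buys the full ineffective statement in greater generality but gives no rate. Your part (1) argument via Minkowski and Mahler is correct and standard.
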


\indent \autoref{MS} can be thought of as identifying ``cones'' in $A$ that govern the convergence of the translates of the measures $\mu_{E}$. For each $E\subset \Delta$, let
$$
\mathcal{C}_{E}=\lb a\in A : \lambda_{\alpha}(a)>1 \text{ for each } \alpha\in \Delta \setminus E\rb.
$$
If $\{a_k\}$ tends to infinity {\it away from the boundary} in $\mC_{E}$ (a notion made precise by the above theorem), then $a_k.\mu$ tends to $\mu_{E}$. If $E=\emptyset$, then we call the set $\mC=\mC_{\emptyset}$  the convergence cone. Each of these cones contains the cone 
$$
\mathcal{A}=\lb a\in A : \alpha(a)>1 \text{ for each }\alpha\in\Delta\rb
$$
which we call the {\it positive} or {\it fundamental} Weyl chamber. 

Our main result is an effective version of \autoref{MS} for the translates of the maximal horospherical measure $\mu$.
\begin{thm}\label{main1}
There exists a constant $\delta=\delta(n)>0$ such that for any $\vp\in C^{\infty}_{comp}(G/\Gamma)$ there exists a constant $C=C(\vp,n)>0$ such that  for any  $a\in A$ 
\begin{equation}\label{equiTheoremMain}
\Big|\int_{G/\Gamma}\vp(a.z)d\mu(z)-\int_{G/\Gamma}\vp(z) dm(z)\Big| < C\left(\min_{\alpha\in \Delta}\lambda_{\alpha}(a)\right)^{-\delta}.
\end{equation}
\end{thm}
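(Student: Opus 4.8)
The plan is to adapt the thickening-and-mixing scheme of Kleinbock--Margulis \cite{KM}. After subtracting its mean we may assume $\int_{G/\Gamma}\vp\,dm=0$, and we may assume $T:=\min_{\alpha\in\Delta}\lambda_\alpha(a)$ is large, since otherwise \eqref{equiTheoremMain} holds trivially with $C=2\|\vp\|_\infty$. Write $x_0=e\Gamma$ and $\Gamma_U=U\cap\Gamma$, so that $Ux_0=U\Gamma/\Gamma\cong U/\Gamma_U$ is compact with $U$-invariant probability measure $\mu$, and the task is to bound $F(a):=\int_{U/\Gamma_U}\vp(a\,u\,x_0)\,d\mu(u)$. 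The new difficulty, absent in the minimal horospherical case of \cite{KW,KM} and absent whenever $a$ lies in the positive Weyl chamber, is that for $a$ lying only in the convergence cone $\mC=\mC_\emptyset$ the operator $\mathrm{Ad}(a)$ can simultaneously expand, fix, and contract root spaces of $\lieu$; equivalently $a$ expands a part of every $A$-stable complement of $\lieu$ (precisely the transposes of the $\mathrm{Ad}(a)$-contracted root spaces of $\lieu$), so a naive thickening of $Ux_0$ transverse to the orbit is destroyed after translating by $a$.

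To exploit the geometry of $\mC$, note that $\lambda_{\alpha_i}$ is the $i$-th fundamental weight, so $a=\exp\bigl(\sum_{l=1}^{n-1}s_l H_{\alpha_l}\bigr)$ with $H_{\alpha_l}=\diag(0,\dots,1,-1,\dots,0)$ the simple coroot, $s_l=\log\lambda_{\alpha_l}(a)>0$, and $\log T=\min_l s_l$. Thus $a=\prod_{l=1}^{n-1}b_l$ with commuting factors $b_l:=\exp(s_l H_{\alpha_l})$, and one checks immediately that $\lambda_\alpha(b_l)=1$ for all $\alpha\ne\alpha_l$ while $\lambda_{\alpha_l}(b_l)=e^{s_l}$; hence each $b_l$ satisfies the hypotheses of \autoref{MS}(2) for the \emph{elementary step} $\mu_E\mapsto\mu_{E\cup\{\alpha_l\}}$. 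Fixing a maximal chain $\emptyset=E_0\subset E_1\subset\cdots\subset E_{n-1}=\Delta$, $E_p=E_{p-1}\cup\{\alpha_{j_p}\}$, for a permutation $(j_1,\dots,j_{n-1})$ of $(1,\dots,n-1)$ to be chosen, the theorem reduces to: (i) an effective form of each elementary step, and (ii) a way to propagate these estimates along the chain keeping the total error of size $\asymp e^{-\delta\min_l s_l}=T^{-\delta}$.

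For (i), which is the part that ``mirrors'' \cite{KM}, I would establish: if $F=E\cup\{\alpha\}$ and $b\in A$ meets the hypotheses of \autoref{MS}(2) with $\lambda_\alpha(b)=e^{s}$, then for each $\ell\in\N$ there are $C_\ell,\delta_\ell>0$ depending only on $n$ with
\[
\bigl|\langle b\,\mu_E,\psi\rangle-\langle\mu_F,\psi\rangle\bigr|\;\le\;C_\ell\,\mathcal{S}_\ell(\psi)\,e^{-\delta_\ell s}\qquad\bigl(\psi\in C^{\infty}_{comp}(G/\Gamma)\bigr),
\]
$\mathcal{S}_\ell$ a fixed $\ell$-th order Sobolev norm. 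This is proved as in the minimal horospherical case, using the description of $\mu_F$ and its invariance group from \cite{MS}: one thickens the compact orbit $Q_E\Gamma/\Gamma$ transversally, within the relevant homogeneous space, along the finitely many directions that $\mathrm{Ad}(b)$ contracts or fixes, against a bump $\Psi$ concentrated in an $r$-neighborhood; trades $\langle b\mu_E,\psi\rangle$ for a matrix coefficient up to an error $O(\mathcal{S}_1(\psi)r)$ coming from uniform continuity together with the contraction/neutrality of the transverse directions; and invokes the exponential decay of matrix coefficients of the $G$-action on $L^2_0(G/\Gamma)$ --- available since $\mathrm{SL}_n(\R)$ has property~(T) for $n\ge 3$, so $L^2_0$ has a uniform spectral gap --- together with $\mathcal{S}_\ell(\Psi)\ll r^{-M}$ for suitable $M=M(n,\ell)$ and $d(e,b)\asymp s$, finishing after optimizing $r$. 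Establishing this for \emph{every} $\ell$, with $\delta_\ell$ permitted to deteriorate, is what makes (ii) possible.

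For (ii) one telescopes along the chain, comparing $b_{j_p}\cdots b_{j_1}\mu$ with $\mu_{E_p}$ and using $\langle b\nu,\psi\rangle=\langle\nu,\psi(b\,\cdot\,)\rangle$. Naively the $p$-th comparison would pair the smallness of $b_{j_{p-1}}\cdots b_{j_1}\mu-\mu_{E_{p-1}}$ (known inductively) against the function $z\mapsto\vp(b_{j_p}z)$, whose Sobolev norm the translation might inflate by $e^{O(\ell s_{j_p})}$. The resolution is that the error created at each step is carried by the few directions that that step contracted, and --- after fixing a good ordering of the chain, whose existence follows from the combinatorics of $\mC$ --- every later factor $b_{j_{p+1}},\dots,b_{j_{n-1}}$ again contracts or fixes those directions, or else they are absorbed into $Q_{E_p}\Gamma/\Gamma$ once enough simple roots have been added; either way the derivatives of $z\mapsto\vp(b_{j_p}z)$ that actually enter are small, so the error is not amplified as it travels to the end of the chain. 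Summing the $n-1$ contributions gives $\ll\mathcal{S}_\ell(\vp)\sum_l e^{-\delta_l' s_l}\ll\mathcal{S}_\ell(\vp)\,e^{-\delta\min_l s_l}=C(\vp,n)\,T^{-\delta}$, which is \eqref{equiTheoremMain}. The main obstacle is precisely this propagation: choosing a single ordering of the chain for which every intermediate error is immune to amplification by all the subsequent factors of $a$, uniformly in $a\in\mC$, so that the final exponent $\delta>0$ depends on $n$ alone.
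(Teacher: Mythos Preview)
Your strategy is genuinely different from the paper's, and the difficulty you flag as the ``main obstacle'' --- the propagation in step (ii) --- is a real gap that your sketch does not close. Already for $n=3$, with $b_1=\diag(e^{s_1},e^{-s_1},1)$ and $b_2=\diag(1,e^{s_2},e^{-s_2})$, the root direction $\lieu_{\alpha_2}$ is contracted by $b_1$ and \emph{expanded} by $b_2$ (by $e^{2s_2}$), while $\lieu_{\alpha_1}$ is contracted by $b_2$ and expanded by $b_1$; so neither ordering of the chain has the property that later factors contract or fix the directions along which the previous thickening was made. The ``absorption into $Q_{E_p}\Gamma/\Gamma$'' clause does not rescue this: the Sobolev norm that enters the matrix-coefficient estimate is the one coming from the regular representation (built from right-invariant vector fields), and for this norm $\mathcal{S}_\ell(\vp(b\,\cdot\,))$ is distorted by powers of $\|\mathrm{Ad}(b)\|$, regardless of where the error measure is supported. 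Since the $s_l$ can be arbitrarily unbalanced, the inflation $e^{O(\ell s_{j_q})}$ from a later factor can overwhelm the gain $e^{-\delta_\ell s_{j_p}}$ from an earlier one, and no ordering fixes this uniformly over $\mC$. Your assertion that a good ordering exists ``from the combinatorics of $\mC$'' is unsubstantiated.

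The paper avoids the propagation problem altogether by a \emph{single} factorization $a=a_t b_t$ with $a_t$ in the positive Weyl chamber $\mA$ and $b_t\in\mC$, chosen so that $\mathrm{dist}(a_t,e)\asymp t\,D(\boldsymbol\theta)$ where $D(\boldsymbol\theta)=\min_\alpha\lambda_\alpha(\boldsymbol\theta)$. Since $b_t$ normalizes $U$ and $\mu$ is $U$-invariant, one may insert a bump $\xi$ on $U$ for free:
\[
\int_{U.z_0}\vp(a_t b_t z)\,d\mu(z)=\int_{U.z_0}\int_U \xi(u)\,\vp(a_t\,u\,b_t z)\,d\nu_U(u)\,d\mu(z).
\]
Now $U$ is the \emph{full} expanding horospherical subgroup for $a_t\in\mA$, so the inner integral is handled directly by the Kleinbock--Margulis estimate (\autoref{equi}); the only Sobolev norm that appears is that of the fixed bump $\xi$, which is independent of $a$ --- this is exactly the point, emphasized in the paper's overview, that distinguishes the periodic case from the absolutely continuous case of \autoref{main3}. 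The outer integral is split according to whether $b_t z$ lies in a large compact set $K_\varepsilon$, and the bad set has small $\mu$-measure by the quantitative nondivergence result (\autoref{nondiv2}), which is proved for all $b_t\in\mC$ using the representation-theoretic \autoref{nondiv_prop}. Optimizing the thickening scale $r$ against $\varepsilon$ gives the rate $e^{-\delta t D(\boldsymbol\theta)}$, i.e.\ $(\min_\alpha\lambda_\alpha(a))^{-\delta}$. No chain, no telescoping, and no Sobolev-norm bookkeeping under translation are needed.
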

We remark the above theorem is trivial when $a\not\in \mC$. To see this suppose that $a\not\in \mC$ and observe (by the definition of $\mC$) for any $\delta>0$, $\left(\min_{\alpha\in \Delta}\lambda_{\alpha}(a)\right)^{-\delta}\geq1$ and by taking $C=2\sup|\vp|$ we find the inequality is always satisfied. Our next result is a generalization of \autoref{main1}, and is an effective version of \autoref{MS}. After a suitable decomposition of measures, its proof proceeds by repeatedly applying \autoref{main1} to certain marginals of $\mu_{E}$.
\begin{thm}\label{main2}
	Let $E\subset F\subset \Delta$. There exists a constant $\delta=\delta(n)>0$ such that for any $\vp\in C^{\infty}_{comp}(G/\Gamma)$ there exists a constant $C=C(\vp,n)>0$ such that  for any  $a\in A$  we have 
		\[
			\Big|\int_{G/\Gamma}\vp(a.h)d\mu_{E}(h)-\int_{G/\Gamma}\vp(h) d\mu_{F}(h)\Big| < C\left(  \min_{\alpha\in F\backslash E}\lambda_{\alpha}(a) \right)^{-\delta}.
		\]
\end{thm}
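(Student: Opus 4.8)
The strategy is exactly the one hinted at in the excerpt: decompose $\mu_E$ as an integral of translates of "smaller" horospherical measures and reduce to Theorem \ref{main1} applied in a lower-rank (or block-structured) situation. Concretely, $Q_E$ is the unipotent radical generated by the simple roots in $E$ together with... more precisely, $Q_E$ is generated by the one-parameter unipotent subgroups inside $P_E$; it contains $U = Q_\emptyset$ and in fact $U = Q_E \cdot U_E'$ where $U_E'$ is the "complementary" part of $U$. The Levi $L_E$ of $P_E$ is (up to center/determinant) a product $\prod_j \SL_{n_j}(\R)$ corresponding to the blocks cut out by $E$, and $Q_E \cap L_E$ is a maximal horospherical subgroup of $L_E$ (a product of the maximal unipotents of the blocks). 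So I would first write, for $\varphi \in C^\infty_{comp}(G/\Gamma)$ and $a \in A$,
\[
\int_{G/\Gamma}\varphi(a.h)\,d\mu_E(h) = \int \Big( \int \varphi\big(a\, q\, \Gamma\big)\, d(\text{maximal horoc. meas. on the Levi blocks})\Big)\, d(\text{transverse variable}),
\]
i.e. disintegrate $\mu_E$ along the fibration $Q_E\Gamma \to$ (closed $Q_E$-orbit viewed through the Levi) so that on each fiber the measure is the pushforward of the maximal horospherical measure of $L_E$ to $G/\Gamma$, translated by the transverse unipotent coordinate.

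Next, fix $a \in A$. Write $a = a' a''$ where $a' \in A$ lies in the center direction / acts trivially on the Levi blocks relevant to $F \setminus E$ and $a''$ is the part whose weights $\lambda_\alpha(a)$, $\alpha \in F\setminus E$, we control; the point is that conjugating $a$ past the transverse unipotent variable only rescales that variable (by characters that are bounded or contracting on the relevant cone), so after an allowable change of variables the inner integral becomes $\int \varphi(a.\,\ell\,\Gamma)\,d\mu^{L_E}(\ell)$ up to a transverse translate. Applying Theorem \ref{main1} \emph{inside each Levi block $\SL_{n_j}(\R)$} — which is legitimate because $n_j \le n$ and Theorem \ref{main1} (with its constant $\delta(n)$ and a $\varphi$-dependent $C$) holds there, and the block's simple roots are exactly the $\alpha \in E$ sitting in that block, so the relevant "$\min \lambda_\alpha(a)$" over the block is governed by the roots in $F\setminus E$ that border the block — upgrades the inner integral to $\int \varphi\, dm^{L_E}$ up to an error $C \big(\min_{\alpha\in F\setminus E}\lambda_\alpha(a)\big)^{-\delta}$. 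Here I must be a little careful: Theorem \ref{main1} as stated is for a single $\SL_n$; to apply it to a product of $\SL_{n_j}$ I either invoke a product/relative version (proved by the same argument) or iterate one block at a time, absorbing the already-handled blocks into the test function by integrating them out, which stays in $C^\infty_{comp}$. Integrating the resulting main term $\int \varphi\, dm^{L_E}$ against the transverse variable then reconstructs $\int \varphi\, d\mu_F$, because $\mu_F$ is precisely the $Q_F$-invariant measure and $Q_F$ is generated by $Q_E\cap L_E$-blocks-enlarged-to-$L_F$ together with the same transverse unipotent; the uniqueness of the invariant probability measure on the closed orbit $Q_F\Gamma$ pins down the identification.

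The main obstacle I expect is bookkeeping the conjugation of $a$ through the transverse unipotent coordinate and checking \emph{uniformity} of the error: one needs that the transverse translate introduced does not spoil the estimate. This is fine on the cone where $\lambda_\alpha(a)\to\infty$ for $\alpha\in F\setminus E$ — there the transverse directions are non-expanding and the change of variables is measure-preserving or contracting — but one has to confirm the estimate is vacuous (as in the remark after Theorem \ref{main1}, via $C = 2\sup|\varphi|$) whenever $a$ leaves the relevant cone, i.e. whenever $\min_{\alpha\in F\setminus E}\lambda_\alpha(a)\le 1$, so that the bound holds for \emph{all} $a\in A$ with no hypothesis. A secondary technical point is that the constant $C$ in the inner application depends on $\varphi$ through Sobolev-type norms, and when iterating over blocks one must track that integrating out a block controls the relevant norm of the reduced test function by the corresponding norm of $\varphi$ (standard, since integration against a smooth compactly supported density is norm-nonincreasing up to a constant). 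Modulo these uniformity checks, the proof is a finite induction on $|F\setminus E|$ (or on the block structure), the base case $F=E$ being trivial and each step being one invocation of Theorem \ref{main1}.
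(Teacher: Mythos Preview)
Your proposal is correct and follows essentially the same approach as the paper. The paper (treating the case $E=\emptyset$, claiming the general case is similar) makes your outline concrete by writing $Q_F$ in explicit block form $\SL_{k_1}\times\cdots\times\SL_{k_{\ell+1}}$ times an upper-triangular ``transverse'' unipotent $W$, decomposing $\nu_U$ as the tensor product of the $\nu_{U_{k_j}}$ with $\nu_W$, and then proving two short lemmas: one showing $g_t\nu_W=\nu_W$ directly via a Jacobian computation (your ``conjugation through the transverse variable is measure-preserving''), and one showing that effective convergence of each tensor factor combines, via the triangle inequality, into effective convergence of the product with rate $\max_j e^{-\gamma_j t}$ (your ``iterate one block at a time''); the Sobolev bookkeeping you flag is exactly what the implied constant in that product lemma tracks. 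Your decomposition $a=a'a''$ is unnecessary in the paper's setup, since $a$ is already block-diagonal and acts on each tensor factor through its corresponding block $g_t^{(j)}$.
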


Our next result is an effective version of \autoref{MS} for absolutely continuous measures. We are able to obtain effective results in the full convergence cone $\mC$ when $n=3$ and $n=4$. For $n>4$ we are able to prove an effective result for flows in a cone that is strictly larger than $\mA$. However, in general, we are unable to handle the absolutely continuous case for the full convergence cone.  
\indent For each $j=1,...,n-1$ we define 
\[
	\mathcal{C}_{j}=\lb   \diag(e^{r_{1}},...,e^{r_{n}})\in A ~:~ \min\lb r_{i} : i=1,...,j \rb \geq \max\lb r_{s} : s=j+1,...,n \rb   \rb, 
\]
and $\tilde{\mC}=\mC_{1}\cup \cdots \cup \mC_{n-1}$.  Let $\nu_{U}$ be the Haar measure on $U$ that is equal to $\mu$ on a fundamental domain of $\Gamma$ in $G/\Gamma$. Now we can state our result regarding absolutely continuous densities. 

\begin{thm}\label{main3}
There exists a constant $\delta=\delta_{n}>0$ such that for any compact subset $L$ of $G/\Gamma$, for any $f\in C^{\infty}_{comp}(U)$, $\varphi\in C^{\infty}_{comp}(G/\Gamma)$, there exists a constant $C=C(f, \vp, L,n)>0$ such that for any $z\in L$ and $a \in \tilde{\mC}$ we have
\be
\Big|\int_U f(u)\vp(auz) d\nu_U(u)-\int_Uf\cdot \int_{G/\Gamma} \vp\Big|<C\cdot \left( \min_{\alpha\in \Delta} \lambda_{\alpha}(a) \right)^{-\delta}.
\ee
\end{thm}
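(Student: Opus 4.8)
The plan is to deduce the absolutely continuous statement from the invariant-measure statement of Theorem \ref{main1} by a ``thickening'' argument. First I would reduce to the case where $f$ is supported on a small neighborhood of the identity in $U$ by a partition-of-unity argument on $\supp f$, at the cost of a constant depending on $f$. Writing $\nu_U$-integrals as integrals against $\mu$ (which agree on a fundamental domain by the definition of $\nu_U$), the object $\int_U f(u)\vp(auz)\,d\nu_U(u)$ becomes an integral of $\vp$ against the pushforward under $a$ of a measure absolutely continuous with respect to $\mu$, supported near the orbit $Uz$. The key geometric input is that for $a\in\tilde{\mC}=\mC_1\cup\cdots\cup\mC_{n-1}$ the conjugation action of $a$ on $U$ is \emph{non-contracting}: if $a\in\mC_j$ then $a u a^{-1}$ has all entries of absolute value at least those of $u$ in the relevant block decomposition, so $a$ expands (or fixes) every coordinate direction of $U$. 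This is exactly why $\tilde{\mC}$ — rather than the full convergence cone $\mC$ — appears: outside $\tilde{\mC}$ some unipotent directions in $U$ get contracted and the thickening argument breaks.

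The main steps, in order, are as follows. (1) Fix $z\in L$ and a small box $B\subset U$ containing $\supp f$; choose a transversal box $V$ in a complementary subgroup so that the product map $V\times B\to G/\Gamma$, $(v,u)\mapsto vuz$, is a measure-preserving bijection onto a neighborhood of $Bz$ up to a bounded Jacobian, uniformly for $z\in L$ by compactness. (2) For $a\in\mC_j$, use the non-contracting property to compare $\int_U f(u)\vp(auz)\,d\nu_U(u)$ with $\frac{1}{\nu(V)}\int_V\int_U f(u)\vp(avuz)\,d\nu_U(u)\,d\nu_V(v)$; the error is controlled by the modulus of continuity of $\vp$ times the diameter of $aVa^{-1}$, which stays bounded (indeed shrinks) precisely because $a$ does not expand the transversal directions when it expands $U$ — here one must check that $V$ can be chosen inside the sum of the contracting and neutral root spaces for every $a\in\mC_j$. (3) The averaged quantity is, up to the bounded Jacobian, $\int_{G/\Gamma}\Psi(a.w)\,d\mu(w)$ for a fixed test function $\Psi\in C^\infty_{comp}(G/\Gamma)$ built from $f$, $\vp$, $z$, and the transversal box; apply Theorem \ref{main1} to get that this is within $C(\min_\alpha\lambda_\alpha(a))^{-\delta}$ of $\int f\cdot\int\vp$. (4) Combine the two error terms; since both are bounded by a constant times $(\min_\alpha\lambda_\alpha(a))^{-\delta}$ (the first because $aVa^{-1}$ shrinks at a rate governed by the same weights, after possibly shrinking $\delta$), the theorem follows, with the constant absorbing the dependence on $f,\vp,L,n$.

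The hard part will be step (2): making the thickening uniform over the \emph{entire} cone $\tilde{\mC}$, which is a union of $n-1$ overlapping sub-cones $\mC_j$, each calling for a different choice of transversal subgroup adapted to the block sizes $(j,n-j)$. One must verify that for $a\in\mC_j$ the directions $a$ contracts or fixes are exactly the off-diagonal blocks that do \emph{not} lie in $U$ together with the ``lower-left'' block, so that a single transversal box $V_j$ works for all of $\mC_j$ with an expansion rate quantitatively tied to $\min_{\alpha\in\Delta}\lambda_\alpha(a)$; and then one patches the finitely many cases together. A secondary technical point is ensuring the test function $\Psi$ in step (3) can be taken in $C^\infty_{comp}$ with norms controlled independently of $a$, which requires the smoothing/transversal construction to be $a$-independent — achievable by fixing the boxes once and for all and letting only the evaluation point vary. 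I expect steps (1), (3), (4) to be routine given Theorem \ref{main1}, with essentially all the content in the cone-by-cone non-contraction analysis of step (2).
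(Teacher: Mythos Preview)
Your proposal has a genuine gap at its geometric core. The claim that ``for $a\in\tilde{\mC}$ the conjugation action of $a$ on $U$ is non-contracting'' is false, and with it the thickening argument collapses. Take $n=3$ and $a=\diag(e^{3},e^{-2},e^{-1})\in\mC_{1}\cap\mC$: the $(2,3)$ entry of $U$ is scaled by $e^{-1}$ under conjugation by $a$, so $a$ \emph{contracts} part of $U$. Correspondingly, the $(3,2)$ entry of the opposite unipotent is scaled by $e^{1}$, so any transversal $V$ complementary to $U$ is \emph{expanded} in that direction and $aVa^{-1}$ does not shrink. Thus step (2) fails for both reasons you rely on it. What is true is much weaker: for $a\in\mC_{j}$ the conjugation action is non-contracting on the \emph{minimal} horospherical block $H_{j}\subset U$, not on all of $U$.

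There is also a structural mismatch in step (3): \autoref{main1} is a statement about the \emph{invariant} probability measure $\mu$ on the specific periodic orbit $U.z_{0}$ through the identity coset, whereas your thickened object is a full-dimensional, compactly supported, absolutely continuous measure near an arbitrary $z\in L$. There is no way to rewrite it as $\int\Psi(a.w)\,d\mu(w)$.

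The paper's proof does not go through \autoref{main1} at all. It follows the same template as \autoref{main1} but with $U$ replaced by the minimal horospherical subgroup $H_{j}$: factor $g_{t}=a_{t}b_{t}$ with $a_{t}$ a one-parameter flow whose expanding horospherical subgroup is exactly $H_{j}$ and $b_{t}\in\mC_{j}$; insert $1=\int_{H_{j}}\xi$; change variables $u\mapsto b_{-t}hb_{t}u$ to move $b_{t}$ past $h$; split according to whether $b_{t}uz$ returns to a large compact set (controlled by quantitative non-divergence, \autoref{nondiv_1}); and on the returning part apply Kleinbock--Margulis effective equidistribution for $H_{j}$ (\autoref{equi}). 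The crucial point---and the actual reason $\tilde{\mC}$ appears---is that in applying \autoref{equi} one must bound the Sobolev norm of $h\mapsto f(b_{-t}hb_{t}u)$, and this stays controlled precisely because conjugation by $b_{-t}$ on $H_{j}$ is non-expanding when $b_{t}\in\mC_{j}$. No transversal thickening is used; the relevant ``marginal'' is the $H_{j}$-marginal of $f\,d\nu_{U}$, and its equidistribution drags along the rest by Fubini.
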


We remark that \autoref{main3} can be obtained from Theorem 1.3 of Kleinbock-Margulis \cite{KM} in conjunction with Fubini's theorem. This was pointed out to us by Kleinbock. The key point is that if $a\in\mC$ completely expands a minimal horospherical marginal of $\nu_{U}$, then the equidistribution of that particular marginal will force the equidistribution of $fd\nu_{U}$. Theorem 1.3 of Kleinbock-Margulis \cite{KM} exactly describes the equidistribution of minimal horospherical measures. \newline
 \indent Notice that $\tilde{\mC}\not\subset \mC$. For instance, $a=\diag(e^{1},e^{-2},e^{1})\in\mC_{1}\subset \tilde{\mC}$ but $a\not\in\mC$. So \autoref{main3} is only non-trivial for $a\in\tilde{\mC}\cap \mC$. See the remark after the statement of \autoref{main1}. In \cite{MS} it was pointed out by Mohammadi and Salehi-Golsefidy  that when $n=5$
 \[
	a_{0}=\diag(e^{6},e^{7},e^{-12},e^{9},e^{10}) 
 \]
 is an element of $\mC$, but it does not fully expand a minimal horospherical subgroup. It follows that $a_{0}\not\in\tilde{\mC}$ (this can be shown directly) and, consequently, that $\tilde{\mC}$ is not even convex when $n=5$. From here it is an easy exercise to show that $\mC\subset \tilde{\mC}$ if and only if $n=3$ or 4. Thus we have the following corollary of \autoref{main3}.

\begin{cor}\label{main4}
Suppose $n=3$ or 4. There exists a constant $\delta=\delta_{n}>0$ such that for any compact subset $L$ of $G/\Gamma$, for any $f\in C^{\infty}_{comp}(U)$, $\varphi\in C^{\infty}_{comp}(G/\Gamma)$, there exists a constant $C=C(f, \vp, L,n)>0$ such that for any $z\in L$ and $ a\in A$ we have
\be
\Big|\int_U f(u)\vp(a. uz) d\nu_U(u)-\int_Uf\cdot \int_{G/\Gamma} \vp\Big|<C\cdot \left( \min_{\alpha\in \Delta} \lambda_{\alpha}(a) \right)^{-\delta}.
\ee
\end{cor}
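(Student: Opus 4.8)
The plan is to obtain \autoref{main4} as an essentially immediate consequence of \autoref{main3}: the two statements are word-for-word the same except that in the corollary the flow element $a$ is allowed to range over all of $A$ rather than only over $\tilde{\mC}$. Accordingly, I would split into the two cases $a\notin\mC$ and $a\in\mC$.

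When $a\notin\mC$ the estimate is trivial, exactly as in the remark following \autoref{main1}. Indeed, by the definition of $\mC$ there is some $\alpha\in\Delta$ with $\lambda_{\alpha}(a)\leq1$, so $\big(\min_{\alpha\in\Delta}\lambda_{\alpha}(a)\big)^{-\delta}\geq1$ and the right-hand side of the claimed inequality is at least $C$; meanwhile the left-hand side is crudely bounded above by $2\big(\int_U|f|\,d\nu_U\big)\sup_{G/\Gamma}|\vp|$, using that $m$ is a probability measure. Hence the inequality holds automatically once $C$ is taken at least this large, which we may arrange by enlarging the constant produced by \autoref{main3} if necessary.

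The only real content is the case $a\in\mC$, for which I would show $a\in\tilde{\mC}$ and then quote \autoref{main3} verbatim. This reduces to the purely combinatorial claim that $\mC\subseteq\tilde{\mC}$ when $n=3$ or $n=4$ --- the fact recorded just before the statement of the corollary. Concretely, write $a=\diag(e^{r_{1}},\dots,e^{r_{n}})$ with $r_{1}+\cdots+r_{n}=0$; then $a\in\mC$ says that every partial sum $r_{1}+\cdots+r_{i}$ with $1\leq i\leq n-1$ is strictly positive, and one must exhibit an index $j\in\{1,\dots,n-1\}$ with $\min\{r_{1},\dots,r_{j}\}\geq\max\{r_{j+1},\dots,r_{n}\}$. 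For $n=3$ and $n=4$ I would run a short case analysis organized by the position of the smallest among $r_{1},\dots,r_{n}$; in each case positivity of the partial sums, together with $\sum_{i}r_{i}=0$, forces a valid choice of $j$. I expect this elementary combinatorial check to be the only step requiring any work, and it is precisely where the hypothesis $n\leq4$ enters: for $n\geq5$ the element $a_{0}=\diag(e^{6},e^{7},e^{-12},e^{9},e^{10})$ lies in $\mC\setminus\tilde{\mC}$, so the reduction breaks down. Combining the two cases then yields the asserted bound for every $a\in A$.
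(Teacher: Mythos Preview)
Your proposal is correct and follows exactly the approach the paper indicates: the paper simply observes that $\mC\subset\tilde{\mC}$ if and only if $n=3$ or $4$ (leaving this as an ``easy exercise'') and then states \autoref{main4} as an immediate corollary of \autoref{main3}, with the case $a\notin\mC$ being trivial by the remark following \autoref{main1}. Your sketch spells out precisely these steps, including a reasonable outline of the combinatorial verification that the paper omits.
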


\subsection{Applications}
In our first application we consider a geometric counting problem first considered in \cite{EM}. Let $K=\SO_{n}(\R)\leq G$ and $X=K\backslash G$ be the corresponding Riemannian symmetric space arising from $G$. If $U$ is a maximal unipotent subgroup of $G$, then  $\mathcal{U}=K\backslash Kg U$ is a horosphere in $X$ and all horospheres in $X$ can be realized in this way. We let $\Xi$ be the space of horospheres in $X$.  Let $\mM=X/\Gamma$ and let $\pi:X\ra \mM$ be the covering map.  Suppose that $\mathcal{U}$ is a horosphere in $X$ such that $\ol{\mU}= \pi(\mU)$ is closed in $\mM$. We are interested in estimating how many lifts of $\ol{\mU}$ intersect a given ball $B(x,R)$ in $X$. That is, we wish to analyze the asymptotic behavior of the quantity 
	\begin{equation}\label{geometricQuantity}
		\# \lb \mU\in \Xi ~:~ \pi(\mU)=\ol{\mU}\text{ and }\mU\cap B(x,R)\neq \emptyset \rb.
	\end{equation}
In the rank one case ($n=2$) it was shown by Eskin and McMullen \cite{EM} that the quantity in \autoref{geometricQuantity} is asymptotic to the volume of $B(x,R)$ (times a suitable constant). The analogous result for higher rank ($n>2$) was established by Mohammadi and Salehi-Golsefidy \cite{MS}. Our first theorem is an effective form of this result for $G=\SL_{n}(\R)$. In principle, the Eskin-McMullen example can be made effective using Sarnak's effective equidistriubtion of low-lying horocycles \cite{Sarnak}. We prove here, as far as we know, the first effective result for this counting problem in higher rank.

\begin{thm}\label{horo1thm}
	Let $\ol{\mU}$ be a closed horosphere in $\mM$ and $x_{0}\in X$ be the identity coset. Then there is a constant $C>0$, depending only on the dimension, and $\delta>0$ such that
		\[
			\begin{split}
			\# \lb \mU\in \Xi : \pi(\mU)=\ol{\mU}\text{ and }\mU\cap B(x_{0},R)\neq \emptyset \rb = C & \dfrac{\mathrm{vol}(\ol{\mU})}{\mathrm{vol}(\mM)}  \mathrm{vol}(B(x_{0},R)) \\ &+O\left(\mathrm{vol}(B(x_{0},R))^{1-\delta}\right).
			\end{split}
		\]
\end{thm}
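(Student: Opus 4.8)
The plan is to convert the geometric counting problem into a statement about equidistribution of a translated horospherical orbit in $G/\Gamma$, and then apply \autoref{main1} to obtain the main term together with a power-saving error. First I would set up the standard dictionary between horospheres in $X = K\backslash G$ and points of $G/\Gamma$: the closed horosphere $\ol{\mU}$ corresponds to a closed orbit $U g_0 \Gamma$ for some $g_0$, and the lifts of $\ol{\mU}$ that meet $B(x_0,R)$ are indexed (with bounded multiplicity coming from the finite stabilizer) by those $\gamma \in \Gamma / (\Gamma \cap g_0^{-1} U g_0)$ for which $K g_0 \gamma$ comes within distance $R$ of the identity coset $x_0 = K$. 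Writing the Cartan decomposition $G = K A^+ K$ and parametrizing $B(x_0,R)$ by the radial coordinate, this reduces \autoref{geometricQuantity} to counting lattice points $\gamma$ with $g_0\gamma \in K A_{\le R} K$, where $A_{\le R}$ is the appropriate truncated positive chamber of ``size'' $R$.

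The second step is the unfolding: the counting function is, up to normalization, $\sum_{\gamma} \chi_{K A_{\le R} K}(g_0 \gamma)$, which after introducing a smooth bump function $f \in C^\infty_{comp}(U)$ supported on a fundamental domain for $\Gamma \cap g_0^{-1}U g_0$ (so that $\int_U f \, d\nu_U$ computes $\vol(\ol{\mU})$ correctly) and a smooth approximation $\vp_R$ to $\chi_{K A_{\le R} K}$ can be written as an integral over $U g_0\Gamma$ of $\vp_R$. Now I would foliate $A_{\le R}$ along rays: essentially, $KA_{\le R}K$ is swept out by $K a_t K$ for $a_t$ ranging over the chamber with $\min_\alpha \lambda_\alpha(a_t)$ up to roughly $e^{cR}$, and on each such sphere the integral of $\vp$ over $Ug_0\Gamma$ is $\int_{G/\Gamma} \vp(a_t \cdot uz)\, f(u)\, d\nu_U(u)$. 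Applying \autoref{main1} (with the fixed test function built from $\vp$) to each $a_t$ gives that this integral equals $\big(\int_U f\big)\big(\int_{G/\Gamma}\vp\big) + O\big((\min_\alpha \lambda_\alpha(a_t))^{-\delta}\big)$. Integrating over the radial parameter, the main term reproduces $C\,\frac{\vol(\ol{\mU})}{\vol(\mM)}\vol(B(x_0,R))$ (the constant $C$ absorbing the normalization of Haar/Riemannian measures and the multiplicity), while the accumulated error is $O\big(\int_0^R e^{cR}\cdot e^{-c\delta t}\,dt \cdot (\text{lower order})\big)$, which is $O\big(\vol(B(x_0,R))^{1-\delta'}\big)$ for a suitable $\delta' > 0$ since $\vol(B(x_0,R)) \asymp e^{cR}$ up to polynomial factors.

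The main obstacle, as usual in effective counting arguments of Eskin--McMullen type, will be the boundary effects of the smoothing: approximating $\chi_{KA_{\le R}K}$ by smooth functions $\vp_R^{\pm}$ with $\vp_R^- \le \chi \le \vp_R^+$ introduces an error controlled by the volume of an $O(\eta)$-neighborhood of the boundary sphere, which is $\asymp \eta \cdot e^{cR}$; meanwhile the Sobolev norms of $\vp_R^{\pm}$ blow up like a negative power of $\eta$, degrading the constant $C(\vp,n)$ in \autoref{main1}. One must therefore track the dependence of the error in \autoref{main1} on the Sobolev norm of the test function (this dependence is polynomial, and is implicit in the Kleinbock--Margulis-style proof) and optimize $\eta$ as a function of $R$, balancing $\eta e^{cR}$ against $\eta^{-N} e^{(c-c\delta)R}$; this yields the final exponent $1-\delta$ after shrinking $\delta$. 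A secondary technical point is controlling the region of $A_{\le R}$ lying near the walls of the Weyl chamber (where $\min_\alpha \lambda_\alpha(a_t)$ stays bounded and \autoref{main1} gives no decay); but this region contributes only a lower-order volume, so it can be absorbed into the error term by a crude bound.
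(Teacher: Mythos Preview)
Your high-level strategy --- Duke--Rudnick--Sarnak counting plus the effective equidistribution of \autoref{main1}, followed by optimization of a smoothing parameter --- is exactly the paper's plan, and your final paragraph (tracking Sobolev-norm dependence, balancing $\eta$, handling the near-wall region by a crude bound) matches the paper's Lemmas \ref{weakConvergenceLemma1}--\ref{approxConvergenceLemma1} closely. But two of your implementation choices diverge from the paper, and the first creates a real gap.

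The reduction step is not right as stated. You parametrize via Cartan $KA^+K$ and assert the count reduces to $g_0\gamma\in KA_{\le R}K$; but the condition ``$\mathcal U\gamma$ meets $B(x_0,R)$'' is a condition on the whole coset $g_0 U\gamma$, not on the single point $g_0\gamma$, so your indicator is testing the wrong thing. The paper instead uses Iwasawa: the image $\overline B_R$ of the ball in $G/U$ is exactly $KA_RU/U$ with $A_R=A\cap\tilde B_R$ (this is \autoref{modUdecomp}), so the counting function $F_R(g\Gamma)=\sum_{\gamma\in\Gamma/\Gamma\cap U}1_R(g\gamma a_0U)$ unfolds cleanly. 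Smoothing is then done on the $G/\Gamma$ side with an approximate identity $\Psi_\epsilon$ (not on the ball indicator), giving
\[
\langle F_R,\Psi\rangle=\nu(\pi(U))\int_K\int_{A_R}\Big(\int_{G/\Gamma}\overline{\Psi}\,d(ka\mu_U)\Big)\rho'_\Delta(a)\,da\,dk,
\]
and the inner integral is precisely what \autoref{main1} controls, with $\Psi$ as the \emph{fixed} test function. This also makes your bump $f\in C^\infty_{comp}(U)$ unnecessary: \autoref{main1} is already stated for the closed-orbit measure $\mu$, so there is nothing to smooth on the $U$ side; the integral $\int f(u)\vp(a_tuz)\,d\nu_U(u)$ you wrote is the setup of \autoref{main3}, not \autoref{main1}. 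With the Iwasawa unfolding in hand, the paper splits $A_R=\Omega_1\cup\Omega_2$ (a cone about $\rho_\Delta$ inside $\mathcal A$ and its complement), applies \autoref{main1} on $\Omega_1$ and a trivial bound on $\Omega_2$, then uses the sandwich $F_{R-\epsilon}\le F_R(\Gamma)\le F_{R+\epsilon}$ with $\epsilon=e^{-cR}$ --- which is your optimization idea, correctly identified. So: replace Cartan by Iwasawa in the reduction, drop $f$, and move the smoothing to $G/\Gamma$, and your outline becomes the paper's proof.
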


To prove the above theorem we only need to use the effective equidistribution for directions coming from the interior of $\mA$. Consequently, our proof of \autoref{horo1thm} can be adapted to prove \cite[Theorem 3]{MS} using only the wavefront lemma of Eskin-McMullen \cite{EM}.\newline
	\indent For our second application we consider the Manin conjecture for flag varieties over $\Q$. This problem was solved for generalized flag varieties by Franke, Manin, and Tschinkel in \cite{FMT}. Their proof uses Langland's analytic continuation of higher rank Eisenstein series, and the method of that paper produces what is essentially the best possible error term. Here we will prove an effective form of their theorem that produces an inferior error term, but by using our effective equidistribution results in place of Eisenstein series. A dynamical proof of a more general result\footnote{In \cite{MS} Mohammadi and Salehi-Golsefidy are also able to handle the counting for heights with respect to arbitrary metrized line bundles.} is provided in \cite{MS}, and it is this proof that we effectivize. 
	Consider the standard representation of $G$ on $\R^n$. It is well known that the stabilizer of any flag in $\R^n$ is a parabolic subgroup of $G$. Conversely, any parabolic subgroup of $G$ stabilizes a flag in $\R^n$. It then follows that any flag variety over $\Q$ can be realized as $X=G/P$ for some parabolic subgroup $P$ of $G$. The anticanonical line bundle of $X$ is induced by a character $\rho_{P}$ of $P$ by $\mL=G\times \R/\sim$ where $(g,x)\sim (gp,\rho_{P}(p)x)$. It follows from \cite[Section 12]{BT65} that $\rho_{P}$ is the highest weight of a unique irreducible representation $\eta:G\ra GL(V)$ which is strongly rational over $\Q$, there is a $v_{0}\in V(\Q)$ such that 
		\[
			P=\lb  g\in G~:~ \eta(g)[v_{0}]=[v_{0}]  \rb
		\]
where $[v_{0}]$ is the point corresponding to $v_{0}$ in $\mathbb{P}(V)$, and $X$ is homeomorphic to $\eta(G)[v_{0}]\subset \mathbb{P}(V)$. Our counting will take place in this orbit and we henceforth identify $X$ with $\eta(G)[v_{0}]$.
We define a function $H:\mathbb{P}(V)(\Q)\ra \R^+$ by $H([v])=\| v \|$ where $v$ is a primitive integral point corresponding to the point $[v]$ and $\|\cdot\|$ is the Eucliean norm on $V$. Using $H(\cdot)$ we define the (anticanonical) height $h:X\ra \R^+$ on $X$ by 
	\begin{equation}
		h(\eta(g)[v_{0}])=H(\eta(g)[v_{0}]).
	\end{equation}
We are interested in the asymptotic behavior of 
	\[
		N(T)=\# \lb x\in X(\Q) ~:~ h(x)\leq T \rb.
	\]
In \cite[Theorem 5]{FMT} it was proven that there exists a polynomial $p$ of degree $rk(\mathrm{Pic}(X))$, such that  
	\[
		N(T)=Tp(\log(T))+o(T)
	\]
as $T\ra\infty$. It is not difficult to show that their method shows that the error term $o(T)$ can be replaced with $O(T^{1-\epsilon})$ for some $\epsilon>0$. We are able to prove the following.

\begin{thm}\label{maninThm}
	Let $X$ and $h$ be as above. Then there exists a constant $\delta>0$, and a polynomial $p(t)$ of degree $k=rk(\mathrm{Pic}(X))$ such that 
		\begin{equation}
			\# \lb x\in X(\Q) ~:~ h(x)\leq T \rb = Tp(\log(T))\left( 1+o(\log(T)^{-\delta})\right)
		\end{equation}
	as $T\ra\infty.$
\end{thm}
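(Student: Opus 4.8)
The plan is to prove \autoref{maninThm} by effectivizing the dynamical proof of the (ineffective) Manin conjecture for $X=G/P$ given in \cite{MS}: that argument reduces $N(T)$ to the equidistribution of translates of a horospherical measure, \autoref{MS}, and our task is to rerun the same reduction with the effective \autoref{main2} in its place while tracking the resulting error. Write $P=P_E$ and $R=\log T$.

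I would begin with the translation into a lattice count. Choosing the primitive integral highest weight vector $v_0\in V(\Q)$ so that $\eta$ preserves an integral lattice in $V$ and $\eta(K)$ acts by Euclidean isometries, one has $X(\Q)=\eta(\Gamma)[v_0]$ (using that $\SL_n$ has class number one), the $\Gamma$-stabilizer of $[v_0]$ is $\Gamma_P:=\Gamma\cap P$, and $\rho_P$ restricts to a unimodular character of $\Gamma_P$; hence $h$ descends to the well-defined function $\Gamma_P\gamma\mapsto\|\eta(\gamma)v_0\|$ and
\[
	N(T)=\#\{\gamma\in\Gamma_P\backslash\Gamma:\ \|\eta(\gamma)v_0\|\le T\}.
\]
Following \cite{MS}: integrating the natural counting function on $G/\Gamma$ against a bump $\chi=\chi_\varepsilon\in C^\infty_{comp}(G/\Gamma)$ of support-radius $\varepsilon$ supported in the thick part and normalized by $\int_{G/\Gamma}\chi\,dm=1$, and unfolding, one finds $N(T)$ sandwiched --- up to an error governed by the boundary $\partial\{h\le T\}$ at scale $\varepsilon$ --- between quantities of the form $\int_{\mathcal P_R}\langle a.\mu_E,\chi\rangle\,w(a)\,da$, where $\mathcal P_R=R\cdot\mathcal P_1$ is the dilate of a fixed polytope cut out by the height constraint together with the cone inequalities $\lambda_\alpha(a)\ge1$ for $\alpha\in\Delta\setminus E$, and $w(a)\,da$ is an explicit smooth density. (That these directions lie in the non-vacuous range $\mathcal C_E$ of \autoref{MS} uses that the anticanonical $\rho_P$ is a strictly positive combination of the fundamental weights $\lambda_\alpha$, $\alpha\notin E$.)

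Next I would insert the effective input. By \autoref{main2} with $F=\Delta$ (so that $\mu_F=m$), $\langle a.\mu_E,\chi\rangle=1+O\big(\mathcal S(\chi)\cdot(\min_{\alpha\in\Delta\setminus E}\lambda_\alpha(a))^{-\delta}\big)$ for a fixed Sobolev norm $\mathcal S$, with $\mathcal S(\chi_\varepsilon)\ll\varepsilon^{-O(1)}$. The main term $\int_{\mathcal P_R}w(a)\,da$ is, up to negligible $\varepsilon$-dependence, precisely the polytope integral evaluated in \cite{FMT,MS}, and equals $Tp(\log T)$ with $p$ of the degree $k$ asserted in \autoref{maninThm}. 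For the error one splits $\mathcal P_R$ into its bulk, where $\min_\alpha\lambda_\alpha(a)\ge R^{B}$ and the contribution is $\ll\varepsilon^{-O(1)}R^{-B\delta}\cdot TR^{k}$, and the complementary neighbourhood of the walls $\{\lambda_\alpha(a)=1\}$, which has relative width $O(R^{B-1})$ and $w\,da$-mass $\ll R^{B-1}\cdot TR^{k}$ and so contributes $\ll\varepsilon^{-O(1)}R^{B-1}\cdot TR^{k}$; choosing $B\in(0,1)$ fixed, $\varepsilon=R^{-\tau}$ for $\tau>0$ small (so the boundary-smoothing error $\ll\varepsilon TR^{k}=R^{-\tau}TR^{k}$ is also absorbed), and optimizing, yields $N(T)=Tp(\log T)\big(1+O((\log T)^{-\delta'})\big)$ for some $\delta'=\delta'(n)>0$, which is \autoref{maninThm}.

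I expect the main obstacle to be the interaction of this smoothing scheme with the cusp of $G/\Gamma$. The supports of the translates $a.\mu_E$ migrate into the cusp as $a$ runs over $\mathcal P_R$, so the part of $N(T)$ coming from lattice points high in the cusp cannot be captured by testing against a fixed bump and must instead be estimated separately --- effectively --- by reduction theory, a step that is soft in \cite{MS} but must be made quantitative here and dovetailed with the rest of the error. The remaining ingredients --- the polytope geometry (the precise shape of $\mathcal P_1$, the degree of the FMT polynomial, and the $O(R^{B-1})$ width of the wall neighbourhood) and the balancing of $\varepsilon$ against the boundary error and the Sobolev loss --- are routine bookkeeping by comparison.
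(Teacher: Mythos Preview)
Your proposal is essentially the paper's own proof: reduce to a single $\Gamma$-orbit, form the counting function $F_T$ on $G/\Gamma$, smooth against a bump $\Psi_\varepsilon$, unfold to an integral over $K\times A_{E^c,T}$ of $\langle ka.\mu_E,\Psi_\varepsilon\rangle$ against the modular weight, split the $A$-domain into a bulk (away from the cone walls) and a thin boundary layer, apply \autoref{main2} on the bulk and a trivial estimate on the layer, and finally balance $\varepsilon$ against the Sobolev loss. The only cosmetic difference is the cutoff: the paper takes the bulk to be the set of directions with $\mathrm{dist}(\boldsymbol\theta,\partial\mathcal C_{E^c})>1/\sqrt{\log T}$, which makes the bulk error $Te^{-\delta\sqrt{\log T}}$ (hence negligible) and leaves the wall layer, of volume $\ll Tp(\log T)(\log T)^{-r}$, as the dominant term; your threshold $\min_\alpha\lambda_\alpha(a)\ge R^{B}$ gives two competing powers of $\log T$ to be optimized over $B$. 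Either bookkeeping yields the stated $o((\log T)^{-\delta})$.

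Your anticipated ``main obstacle'' is a phantom, however. In the DRS scheme the bump $\Psi_\varepsilon$ is supported near the identity coset, and the sandwich $F_{T-\varepsilon}(g\Gamma)\le F_T(\Gamma)\le F_{T+\varepsilon}(g\Gamma)$ for $g\in\mathrm{supp}\,\Psi_\varepsilon$ already captures \emph{all} of $N(T)$; no lattice points are lost to the cusp. After unfolding, the translates $a.\mu_E$ may indeed visit the cusp, but you are only integrating them against the compactly supported $\Psi_\varepsilon$, and \autoref{main2} is stated precisely for such test functions --- the quantitative non-divergence needed is absorbed into its proof. No separate reduction-theory estimate is required, and the paper performs none. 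The genuine work is exactly the two points you flagged as ``routine'': controlling the wall layer and balancing $\varepsilon=(\log T)^{-c}$ against the Sobolev blowup $\|\Psi_\varepsilon\|_\ell\ll\varepsilon^{-O(1)}$.
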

In the proofs of the previous two theorems we use a well developed counting technique which is due to Duke-Rudnick-Sarnak \cite{DRS} and that has been employed by a number of authors. We recommend the survey \cite{Oh} of Hee Oh for an overview of the method as well as its various applications. \newline

\subsection{Further Remarks and References}
	\indent After the initial submission of this paper we learned of a recent preprint of Shi \cite{Shi} that generalizes the main results of \cite{KSW} and includes a generalization \cite[Theorem 1.5]{Shi} of our \autoref{main3}. In both \cite[Theorem 1.5]{Shi} and our \autoref{main3} above it is required that the translates of the maximal horospherical measure contain a minimal horospherical marginal which is completely expanded. In rank four and greater, this is not always possible (see the example in \cite[\S 2]{MS} which is mentioned in the remarks following \autoref{main3} above). The proofs of both \autoref{main3} and \cite[Theorem 1.5]{Shi} follow the approach of Kleinbock-Margulis \cite{KM} which is summarized in \S\ref{overview}. It will be apparent from the remarks in \S\ref{overview} that Shi's proofs can be modified to prove a  generalization of our \autoref{main1} when $G$ is a higher rank semisimple Lie group without compact factors. \newline 
	\indent Currently it seems that new ideas are needed to prove a full generalization of \cite[Theorem 2]{MS} or even \autoref{main4} in rank greater than three. See also the remarks below the statement of Theorem 1.4 in \cite{Shi}. In \cite{MS} more general ineffective versions of the above theorems were proved, and it would be desirable to treat their effectivization for applications. In particular, it would be interesting to prove an effective version of \cite[Theorem 1]{MS} (i.e. a generalization of our \autoref{main2}), and to treat the Manin problem for a generalized flag variety with respect to an arbitrary metrized line bundle. We plan to revisit these questions in a follow-up paper. The purpose of this paper is to report this progress in a concrete setting: the space of unimodular lattices. \newline
\subsection*{Organization of the paper}
We begin by proving our effective equidistribution theorems in \autoref{translates}. In \autoref{recall-km} and \autoref{kathryn} we recall some results we will need from \cite{KM,KW} regarding Margulis's thickening technique and establishing a quantitative recurrence result for translates of maximal unipotent orbits (see \autoref{nondiv_1} and \autoref{nondiv2}). Then we finish \autoref{translates} with the proofs of \autoref{main1}, \autoref{main2}, and \autoref{main3} in \autoref{pf1}, \autoref{pf2}, and \autoref{proof_of_main3} respectively. \newline 
\indent In \autoref{horospheres} we prove \autoref{horo1thm} and then prove \autoref{maninThm} in \autoref{rational}.
\section{Translates of horospherical measures}\label{translates}

While we have stated our main results in terms of the multiplicative form of $\Delta$, we will find it convenient to prove our results in additive form. That is, we take logs, and instead of considering elements in $A$ we consider elements in its Lie algebra $\mathfrak{a}$, the vector space of traceless diagonal matrices. More specifically, for any $a \in A$, we may write $a = \exp(\diag(t_1, \dots, t_n))$, where $t_1, \dots, t_n \in \R$. Then, abusing the notation, we let $\Delta=\{ \alpha_{1}, \dots, \alpha_{n-1}  \}$, where
	\[
		\alpha_i(\diag(t_1, \dots, t_n))=t_{i}-t_{i+1}.
	\]	
The set $\Delta$ is a standard choice of simple roots of $\mathfrak{g}=\mathfrak{sl}_{n}(\R)$. The corresponding fundamental weights are given by
	\[
		\lambda_{\alpha_{i}}(\diag(t_1, \dots, t_n))=t_1 + \cdots + t_i.
	\]
Then the cones $\mA$ and $\mC_{E}$ may be identified with their logarithms as follows:
$$
\mathcal{A}=\{X \in \liea : \alpha(X) > 0 \quad \text{for each} \quad \alpha \in \Delta\},
$$
and for each $E \subset \Delta$
$$
\mathcal{C}_{E}=\{X \in \liea : \lambda_{\alpha}(X) > 0 \quad \text{for each} \quad \alpha \in \Delta \setminus E \}.
$$
%
%
%
\subsection{An overview of the method} \label{overview}
Our goal in \autoref{translates} is to prove the effective equidistribution results in \autoref{main1} and \autoref{main3}. The proofs of the two theorems use similar ideas. We will provide an overview of these ideas for \autoref{main1} and then we will comment on the additional complications that must be dealt with in the proof of \autoref{main3}.

Let $z_{0}=e\Gamma\in G/\Gamma$ be the identity coset, $a=g_t=\exp(t\boldsymbol{\theta})$, where $t>0$, and $\boldsymbol{\theta}\in\mC$ is on the unit sphere of $\mathfrak{a}$. We assume, as we may, that the test function $\vp$ in \autoref{main1} satisfies $\int_{G/\Gamma}\vp dm=0$. Let $\xi$ be a smooth function supported in $B_U(r)$ with $\int_U \xi=1$.
Then plainly
$$\int_{U.z_0} \vp(g_t z) d\mu(z)
=\int_{U}\int_{U.z_0} \xi(u)\vp(g_t z) d\mu(z) d\nu_U(u).
$$
As $g_t$ lies in the interior of the convergence cone $\mathcal{C}$, we can
write $g_t=a_tb_t$ where $a_t$ is a perturbation lying in the interior of the positive Weyl chamber and $b_t$ still lies in the interior of the convergence cone $\mathcal{C}$.
Since $b_{-t}ub_t\in U$ and the measure $\mu$ on $U.z_0$ is left invariant ($z\mapsto b_{-t}ub_tz$), we have
$$
\int_{U.z_0} \vp(g_t z) d\mu(z)=\int_{U.z_0} \vp(a_tb_t z) d\mu(z) =\int_{U.z_0}\int_{U} \xi(u)\vp(a_tub_t z)  d\nu_U(u)d\mu(z).
$$
Now we are in a position to estimate the above integral. To accomplish this we write $U.z_{0}$ as $U.z_{0}=B_1\cup B_2$, where $
		B_1:=\lb  z\in U.z_{0} ~:~ b_{t}\cdot z\not\in K  \rb
	$ consists of those $z$ not returning to a properly chosen large compact subset $K$ of $G/\Gamma$; and write
	\be\label{small}
		\int_{U.z_0}\int_{U}\xi(u)\vp(a_tub_t z)  d\nu_U(u)d\mu(z)=I + II
	\ee
where

	\[
		I:=\int_{B_1}\int_{U}\xi(u)\vp(a_tub_t z)  d\nu_U(u)d\mu(z)
	\]
and
	\[
		II:= \int_{B_2}\int_{U} \xi(u)\vp(a_tub_t z)  d\nu_U(u)d\mu(z).
	\]
	
	To prove \autoref{main1} it suffices to show that the integrals $I$ and $II$ in (\ref{small}) are both small. 
	To show that integral $I$ is small, we will prove in \autoref{kathryn} that the measure of $B_1$ is small. In other words, most of the points of $U.z_0$, translated by $b_t$, will return to the compact set $K$. As we will see, the return is guaranteed by the fact that $b_t$ lies in the interior of the cone $\mathcal{C}$. 
		To show that integral $II$ is small, we will use a result of Kleinbock-Margulis \cite{KM} on the effective equidistribution of the full expanding horospherical orbits. Their result will be recalled in \autoref{recall-km}.  \newline
		\indent The proof of \autoref{main3} is quite similar to the proof just outlined but there is a crucial difference. Following the discussion above, but replacing $d\mu(z)$ by $f(z)d\mu(z)$, we come to a situation where we choose sets $B_{1}$ and $B_{2}$ (which now depend on the choice of $f$) and estimate the integrals $I$ and $II$. It turns out that estimating $I$ is manageable. However the estimate of $II$ is based on effective equidistribution of the full expanding horospherical orbits of \cite{KM} (which is \autoref{equi} below). After applying this result the Sobolev norm of $h\in H \mapsto f(b_{-t}hb_tz)$ makes an appearance where $H$ is the horospherical subgroup appearing in \autoref{equi} below. We control this Sobolev norm by choosing $b_{t}$ so that $H$ is completely expanded by conjugation with $b_{t}$. But it is not always possible to choose $b_{t}\in \mC$ in this way while also choosing $a_{t}$ to lie in $\mA$. (\S2 of \cite{MS} provides an example of such a flow $g_{t}$. See the remarks following \autoref{main3} above.) This is why we are not presently able to prove \autoref{main3} for all $a\in\mC$. So the crucial difference in the proofs of \autoref{main1} and \autoref{main3}: when $f$ is a constant function (as in \autoref{main1}) there is no need to control its Sobolev norm! Without the need to control the Sobolev norm, the proof of \autoref{main1} goes through without restricting the factorization $g_{t}=a_{t}b_{t}$.

\subsection{Effective equidistribution of expanding horospheres}\label{recall-km}

Fix a right-invariant metric `$\mathrm{dist}$' on G which gives rise to the corresponding metric on $SO(n)\backslash G$. The following result is essentially \cite[Theorem 2.3]{KM}.

\begin{prop}\label{equi}  
Let $\{a_t: t>0\}$ be a diagonal flow in $G$ and $H$ the full expanding horospherical subgroup of $\{a_t: t>0\}$.
Let $z\in G/\Gamma$, $f\in C_{comp}^\infty(H)$, and $0<r<1$ be such that the map $g\mapsto g.z$ is injective on $B_G(2r)\supp(f)\subset G$. Then for any $t>0$ and any smooth function $\vp$ on $G/\Gamma$ with $\int_{G/\Gamma}\vp=0$ one has that
\be\label{decay1}
\Big|\int_H f(h)\vp(a_thz)d\nu_H(h)\Big|\ll r\cdot\no[\vp]_{\rm Lip}\cdot\int_H|f|+ r^{-k}\cdot\no[f]_\ell \cdot \no[\vp]_\ell \cdot e^{-\gamma \mathrm{dist}(a_{t},e)}
\ee
where $\gamma>0$ is an absolute constant and $k,\ell\in\Z^+$, where $k>2\ell$  depends on $\ell$ and $\dim(H)$, and the implied constant is absolute.
\end{prop}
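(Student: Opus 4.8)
The plan is to deduce Proposition~\ref{equi} from \cite[Theorem 2.3]{KM} by a standard thickening-and-mixing argument, so I first need to set up the right geometric framework. The key identity is the observation that when $H$ is the full expanding horospherical subgroup of $\{a_t\}$, the conjugation map $h\mapsto a_{-t}ha_t$ contracts $H$, so translating an $H$-orbit by $a_t$ is like running an expanding flow; this is precisely the setting of \cite{KM}. I would begin by fixing the compact piece $\supp(f)\subset H$ and the radius $0<r<1$ so that $g\mapsto g.z$ is injective on $B_G(2r)\supp(f)$. The hypothesis on injectivity is exactly what lets us lift the computation on $G/\Gamma$ to a computation on a fundamental neighborhood in $G$, and I would record this reduction first.

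Next I would carry out the thickening. Choose a nonnegative bump function $\psi$ on the complementary directions (a transversal to $H$ through $e$, coming from the stable and central parts of $a_t$) supported in a ball of radius $\asymp r$ with $\int\psi=1$, and form the product test function $F(h\cdot v)=f(h)\psi(v)$ on a neighborhood of $\supp(f)$ in $G$. Then $\int_H f(h)\vp(a_t h z)\,d\nu_H(h)$ differs from $\int_G F(g)\vp(a_t g z)\,dg$ by an error controlled by the Lipschitz norm of $\vp$ times $\mathrm{dist}(a_t e a_{-t}, e)$-sized displacements, i.e.\ by $r\cdot\no[\vp]_{\mathrm{Lip}}\cdot\int_H|f|$ after one accounts for how far $a_t v a_{-t}$ moves from $e$ for $v$ in the transversal ball (here one uses that the transversal lies in the non-expanding directions, so conjugation does not blow it up — it either contracts or fixes it, keeping the displacement $O(r)$). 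This yields the first term on the right-hand side of \eqref{decay1}.

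The remaining integral $\int_G F(g)\vp(a_t g z)\,dg$ is then handled by the effective mixing / effective equidistribution input of \cite[Theorem 2.3]{KM}: it is bounded by $\no[F]_\ell\cdot\no[\vp]_\ell\cdot e^{-\gamma\,\mathrm{dist}(a_t,e)}$ for suitable Sobolev degrees, using $\int_{G/\Gamma}\vp=0$. Since $F$ is a product of $f$ with a fixed bump of width $\asymp r$ in $k-\dim(H)$ transverse directions, each transverse derivative costs a factor $r^{-1}$, so $\no[F]_\ell\ll r^{-(k-\dim H)}\no[f]_\ell$; absorbing constants and renaming $k$ so that $k>2\ell$ (the factor $2$ leaves room for both the thickening loss and the degree needed in the Sobolev embedding used inside \cite{KM}) gives the $r^{-k}\no[f]_\ell\no[\vp]_\ell e^{-\gamma\,\mathrm{dist}(a_t,e)}$ term. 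Combining the two estimates produces \eqref{decay1}.

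The main obstacle I anticipate is bookkeeping of the Sobolev degrees and making the dependence of $k$ on $\ell$ and $\dim(H)$ explicit and uniform — in particular, verifying that the thickening in the transverse directions can be done with a bump whose $\ell$-th Sobolev norm is $\ll r^{-(k-\dim H)}$ and that the mixing estimate of \cite{KM} is applied with exactly the regularity one has available, so that no hidden dependence on $t$ or $z$ creeps into the implied constant. A secondary technical point is checking that the transversal can genuinely be chosen inside the non-expanding directions of $a_t$, which is automatic since $H$ is the \emph{full} expanding horospherical subgroup; this is what keeps the thickening error at size $O(r)$ rather than growing with $t$.
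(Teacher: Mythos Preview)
Your proposal is correct and follows essentially the same approach as the paper: the paper does not reproduce a proof but simply refers to \cite[Theorem~2.3]{KM}, noting that the argument there (the thickening-and-mixing scheme you describe) carries over verbatim from the special diagonal flow treated in \cite{KM} to a general diagonal flow. Your sketch is in fact a faithful summary of that proof of \cite[Theorem~2.3]{KM}, including the correct identification of the two error terms and the reason the transversal thickening stays $O(r)$ (namely that $H$ is the \emph{full} expanding horospherical subgroup, so the transversal lies in the non-expanding directions).
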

We shall not reproduce the proof of the above proposition since it is nearly identical to the proof of  \cite[Theorem 2.3]{KM}. They prove the above proposition for the special case $a_{t}=\diag(e^{t/m},...,e^{t/m},e^{-t/n},...,e^{-t/n})$, but the general case follows easily.
\subsection{Quantitative non-divergence of unipotent flows}\label{kathryn}
For $\varepsilon > 0$ define 
$$
K_\varepsilon := \pi(\{g \in G : ||g\bv|| \geq \varepsilon \quad \text{for all} \quad \bv \in \Z^{n} \setminus \{0\}\}).
$$

\noindent In other words, $K_\varepsilon$ consists of unimodular lattices in $\R^{n}$ whose first minimum is at least $\varepsilon$. By Mahler's compactness criterion, $K_{\varepsilon}$ is a compact subset of $G/\Gamma$. 
Kleinbock and Margulis proved in \cite{KM2} that certain polynomial maps cannot escape $K_\varepsilon$ except on a set of small measure. See Theorem~5.2 from \cite{KM2}. This result was generalized in \cite{BKM}. The following Theorem from \cite{KM} is a special case of Theorem~6.2 from \cite{BKM}.

\begin{thm}[{\cite[Theorem 3.1]{KM}}]\label{KMnondiv_thm}
Let $\phi: \R^{d} \to GL_{n}(\R)$ be a map such that all coordinates are polynomial of degree not greater than $l$, and let $B$ be a ball in $\R^d$ such that for any $k=1, \dots, n-1$ and any $\bv \in \bigwedge^{k}(\Z^{n}) \backslash \{0\}$, $||\phi(x) \bv|| \geq 1$ for some $x \in B$. Then for any positive $\varepsilon \leq 1$,
$$
\lambda( \{ x \in B : \pi(\phi(x)) \notin K_{\varepsilon} \}) \ll \varepsilon^{\frac{1}{dl}}\lambda(B),
$$
where $\lambda$ is the Lebesgue measure on $\R^{d}$ and $||\cdot||$ is the Euclidean norm. 
\end{thm}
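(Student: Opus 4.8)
Since the statement is quoted from \cite{BKM} (generalizing \cite[Theorem 5.2]{KM2}), I will only outline the strategy, namely the method of \emph{$(C,\alpha)$-good} functions of Kleinbock and Margulis. Recall that a function $f$ on a ball $B'\subset\R^d$ is \emph{$(C,\alpha)$-good} if $\lambda(\{x\in B':|f(x)|<\varepsilon\})\le C\,(\varepsilon/\sup_{B'}|f|)^\alpha\,\lambda(B')$ for every $\varepsilon>0$. The two elementary ingredients I would record first are: a polynomial of degree $\le p$ in $d$ real variables is $(C,\tfrac1{dp})$-good on every ball, with $C=C(d,p)$; and the Euclidean norm of a tuple of $(C,\alpha)$-good functions is again $(C',\alpha)$-good, with $C'$ comparable to $C$. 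Applying these to the polynomial coordinates of the exterior-power maps $x\mapsto\bigwedge^k\phi(x)$, $k=1,\dots,n-1$, one obtains that for every $\bv\in\bigwedge^k(\Z^n)\setminus\{0\}$ the function $\psi_{\bv}(x)=\|\phi(x)\bv\|$ is $(C,\alpha)$-good on $B$ with $C$ and $\alpha>0$ depending only on $d$, $n$ and $l$. The hypothesis of the theorem is precisely that $\sup_B\psi_{\bv}\ge 1$ for every such $\bv$, which is the non-degeneracy needed below.

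Next I would feed this into the abstract quantitative non-divergence criterion of \cite{KM2,BKM}. Index by the poset of nonzero \emph{primitive subgroups} of $\Z^n$, i.e.\ subgroups of the form $\Z^n\cap W$ with $W\subset\R^n$ a linear subspace, and to each such $\Delta$ attach $\psi_\Delta:=\psi_{\bv_\Delta}$, where $\bv_\Delta$ generates $\bigwedge^{\operatorname{rk}\Delta}\Delta$; thus $\psi_\Delta(x)$ is the covolume of $\phi(x)\Delta$ in the subspace it spans. By Mahler's compactness criterion and reduction theory, $\pi(\phi(x))\notin K_\varepsilon$ forces $\psi_\Delta(x)$ to be small for at least one $\Delta$. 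The criterion asserts that, given the $(C,\alpha)$-goodness of all the $\psi_\Delta$ together with the non-degeneracy $\sup_B\psi_\Delta\ge 1$, the set of $x\in B$ for which $\pi(\phi(x))\notin K_\varepsilon$ has measure at most $C'(d,n,l)\,\varepsilon^{\alpha}\,\lambda(B)$, which (after identifying $\alpha$ with $1/(dl)$ by a routine bookkeeping of degrees through the exterior powers) is the conclusion of the theorem.

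The heart of the matter, and the step I expect to be the main obstacle, is the proof of that criterion. It goes by induction on $n$, equivalently on the length of flags of primitive subgroups. One uses the Besicovitch covering theorem to pass to sub-balls on which a single primitive subgroup $\Delta$ is ``responsible'' for the escape from $K_\varepsilon$; on such a sub-ball the $(C,\alpha)$-good property of $\psi_\Delta$ bounds the corresponding bad set, while the remainder of the escape is controlled by projecting to the quotient lattice $\Z^n/\Delta$ (inside the exterior algebra) and invoking the inductive hypothesis. The algebraic input that makes the induction close is the submultiplicativity of covolumes, $d(\Delta_1+\Delta_2)\,d(\Delta_1\cap\Delta_2)\le d(\Delta_1)\,d(\Delta_2)$, a consequence of the geometry of the exterior algebra, which lets one compare the functions attached to incomparable subgroups. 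Since this combinatorial--geometric induction is carried out in the needed generality in \cite{BKM} (and in the polynomial setting in \cite{KM2}), I would simply cite it; the remaining steps above are bookkeeping.
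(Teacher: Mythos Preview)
The paper does not give a proof of this statement; it is quoted from \cite[Theorem~3.1]{KM} as a special case of \cite[Theorem~6.2]{BKM} and used only as a black box. Your outline faithfully summarizes the strategy of those sources --- the $(C,\alpha)$-good machinery, the poset of primitive subgroups of $\Z^n$, covolume submultiplicativity, and the Besicovitch-based induction on flags --- and since you yourself defer the combinatorial heart to \cite{KM2,BKM}, your proposal is entirely in line with the paper's own treatment (namely, citation).

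One point of caution: the ``routine bookkeeping of degrees through the exterior powers'' you invoke to recover the exponent $\alpha=1/(dl)$ is not quite routine. The entries of $\bigwedge^{k}\phi(x)$ are $k\times k$ minors of $\phi(x)$, hence polynomials of degree up to $kl$; the $(C,\alpha)$-goodness exponent for the rank-$k$ covolume functions is therefore $1/(dkl)$, and taking the worst case $k=n-1$ yields $1/(d(n-1)l)$ rather than $1/(dl)$. This is harmless for everything downstream --- every application in the paper uses only the existence of \emph{some} positive exponent $\kappa=\kappa(n)$, and indeed the paper's own corollaries record the exponent simply as $1/(d(n-1))$ --- but you should not claim to have recovered the exponent exactly as stated without addressing why the degree does not deteriorate under exterior powers.
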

Let $d := \frac{n^2-n}{2}$, and let $\{X_1, \dots, X_d\}$ be a basis for the Lie algebra $\lieu$ of $U$. Define
$$
\Theta: \R^d \to G \quad \text{by} \quad \Theta(s_1, \dots, s_d) = \exp(s_1X_1) \dots \exp(s_dX_d).
$$
Let $g_{\mathbf{t}} := \diag(e^{t_1}, \cdots, e^{t_n})$, and define $T_{\min} := \min \limits_{1 \leq j < n} t_1 + \dots + t_j$.
We will apply \autoref{KMnondiv_thm} with $\phi: \R^d \to G$ defined by
$$
\phi: s \mapsto g_{\mathbf{t}}\Theta(s)g
$$
for a fixed $g_{\mathbf{t}} \in \mathcal{C}$ and $g \in G$. It is easy to see that this choice of $\phi$ satisfies the first condition of \autoref{KMnondiv_thm}. We will use the next proposition to show that $\phi$ satisfies the second condition.

\begin{prop}\label{nondiv_prop}
Let $\rho: G \to GL(V)$ be a representation on a finite-dimensional vector space $V$ with no nonzero $G$-invariant vectors. Then there exist $\alpha >0$ and $c_{1} > 0$ such that for any $\bv \in V$ and $g_{\mathbf{t}} \in \mathcal{C}$,
$$ 
\sup \limits_{u \in B_U(r)} ||\rho(g_{\mathbf{t}} u)\bv|| \geq c_{1}e^{\alpha T_{\min}}||\bv||, 
$$
where $c_{1}$ depends on $r$, the representation, and choice of norm, and $\alpha$ depends on the representation.
\end{prop}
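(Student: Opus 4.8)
The plan is to decompose $V$ into irreducible pieces and, on each piece, to use an element $u\in B_U(r)$ to move the weight mass of $\rho(u)\bv$ up into the highest weight space for $A$, which $\rho(g_{\mathbf{t}})$ then dilates by a factor that is $\gg e^{\alpha T_{\min}}$ precisely because $g_{\mathbf{t}}\in\mC$. First I would reduce to the irreducible case: since $\lieg=\mathfrak{sl}_n(\R)$ is semisimple, $\rho$ is completely reducible, $V=\bigoplus_{k=1}^{N}V_{k}$ with each $V_{k}$ irreducible, and the hypothesis that $V$ has no nonzero $G$-invariant vector means no $V_{k}$ is trivial. Fixing a norm making this decomposition orthogonal (any other norm is comparable and only changes $c_{1}$), for $\bv=\sum_{k}\bv_{k}\neq 0$ pick $k$ with $\|\bv_{k}\|\geq\|\bv\|/\sqrt N$; since $V_{k}$ is $G$-invariant and the projection $V\to V_{k}$ is norm-nonincreasing, $\sup_{u\in B_{U}(r)}\|\rho(g_{\mathbf{t}}u)\bv\|\geq\sup_{u\in B_{U}(r)}\|\rho(g_{\mathbf{t}}u)\bv_{k}\|$, so it is enough to treat each $V_{k}$ separately, absorbing $\sqrt N$ into $c_{1}$. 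Thus I assume $V$ irreducible and nontrivial, with highest weight $\lambda$; as $\lambda$ is a nonzero dominant weight I may write $\lambda=\sum_{i=1}^{n-1}m_{i}\lambda_{\alpha_{i}}$ with $m_{i}\in\Z_{\geq 0}$ and $\sum_{i}m_{i}\geq 1$.

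Next I would let the torus do the dilation. Choose an inner product on $V$ for which distinct $A$-weight spaces are orthogonal, and let $p_{\lambda}$ be the orthogonal projection onto the highest weight space $V_{\lambda}$. For every $w\in V$,
\[
\|\rho(g_{\mathbf{t}})w\|^{2}=\sum_{\chi}e^{2\chi(\mathbf{t})}\|w_{\chi}\|^{2}\geq e^{2\lambda(\mathbf{t})}\|p_{\lambda}w\|^{2}.
\]
When $g_{\mathbf{t}}\in\mC$ we have $\lambda_{\alpha_{i}}(\mathbf{t})=t_{1}+\cdots+t_{i}\geq T_{\min}>0$ for $1\le i<n$, hence $\lambda(\mathbf{t})=\sum_{i}m_{i}(t_{1}+\cdots+t_{i})\geq(\sum_{i}m_{i})T_{\min}\geq T_{\min}$. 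Applying the displayed inequality with $w=\rho(u)\bv$ and taking a supremum over $u\in B_{U}(r)$,
\[
\sup_{u\in B_{U}(r)}\|\rho(g_{\mathbf{t}}u)\bv\|\;\geq\;e^{T_{\min}}\sup_{u\in B_{U}(r)}\|p_{\lambda}(\rho(u)\bv)\|.
\]
So it remains to produce $c_{1}=c_{1}(r,\rho,\|\cdot\|)>0$ with $\sup_{u\in B_{U}(r)}\|p_{\lambda}(\rho(u)\bv)\|\geq c_{1}\|\bv\|$ for all $\bv$, which yields the proposition with $\alpha=1$. (It is exactly here that $g_{\mathbf{t}}\in\mC$ is used; for general $g_{\mathbf{t}}$ the top weight $\lambda$ need not be expanded at all.)

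For the remaining inequality, the map $u\mapsto p_{\lambda}(\rho(u)\bv)$ is polynomial on $U\cong\R^{d}$, so $F(\bv):=\sup_{u\in\overline{B_{U}(r)}}\|p_{\lambda}(\rho(u)\bv)\|$ is continuous and positively homogeneous of degree one, and it suffices to show $F(\bv)>0$ for all $\bv\neq 0$: then $F$ attains a positive minimum $c_{1}$ on the unit sphere and homogeneity finishes it. So suppose $p_{\lambda}(\rho(u)\bv)=0$ for all $u\in U$; I claim $\bv=0$. Let $W$ be the span of the Borel orbit $\{\rho(b)\bv:b\in B\}$, where $B=AU$; as $B$ is connected, $W$ is a $\mathfrak{b}$-submodule of $V$, nonzero if $\bv\neq 0$. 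Writing $b=au$ and using that $\rho(a)$ preserves weight spaces, $p_{\lambda}(\rho(b)\bv)=e^{\lambda(\log a)}p_{\lambda}(\rho(u)\bv)=0$, so $p_{\lambda}(W)=0$. On the other hand $A$ acts on $V$ by commuting real-diagonalizable operators, so $W$ decomposes into $A$-weight spaces; a weight vector $w_{0}\neq 0$ of maximal weight in $W$ is annihilated by $\lieu$ (otherwise some $d\rho(X)w_{0}\in W$ would have strictly larger weight), and in the irreducible module $V$ such a $\lieu$-primitive weight vector generates $V$ under $\lieu^{-}$, forcing its weight to be the highest weight $\lambda$ and $w_{0}\in V_{\lambda}\setminus\{0\}$. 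This contradicts $p_{\lambda}(W)=0$. Hence $\bv=0$, which is the required non-vanishing, and assembling the three steps proves the proposition.

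I expect this last step — the qualitative fact that $u\mapsto p_{\lambda}(\rho(u)\bv)$ is not identically zero for $\bv\neq 0$ — to be the only place needing genuine representation theory; the reduction to irreducibility and the torus computation are bookkeeping, and the passage from the qualitative statement to the quantitative bound is the standard compactness-and-homogeneity argument.
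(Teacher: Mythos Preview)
Your proof is correct and follows the same overall architecture as the paper's: project onto the $U$-fixed vectors, show that $g_{\mathbf t}\in\mC$ expands that subspace by $e^{\alpha T_{\min}}$, and then bound $\sup_{u\in B_U(r)}\|p(\rho(u)\bv)\|$ from below by a constant times $\|\bv\|$. The differences are in how the two ingredients are supplied. The paper keeps $V$ arbitrary, proves the torus-expansion step on all of $V^{U}$ via an $\SL_2$-subgroup argument (its \autoref{nondiv_lem}), and then invokes \cite[Lemma 5.1]{shah1996limit} as a black box for the lower bound on $\sup_{u}\|p(\rho(u)\bv)\|$. You instead first reduce to irreducible $V$; this makes $V^{U}=V_\lambda$ a single weight line, so the torus estimate is immediate from $\lambda=\sum_i m_i\lambda_{\alpha_i}$, and you replace the citation to Shah by a self-contained compactness-and-homogeneity argument, the heart of which is the standard fact that an irreducible module has no nonzero $\lieu$-primitive weight vector outside $V_\lambda$.

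What each buys: the paper's route is shorter on the page and packages the hardest step as a reference, and by working with all of $V^{U}$ at once it avoids the bookkeeping of the irreducible decomposition. Your route is entirely self-contained and arguably more transparent for $\SL_n$, and it yields a concrete $\alpha$ (you may take $\alpha=1$, or even $\alpha=\sum_i m_i$ on each irreducible piece). One small remark: your extension of $p_\lambda(\rho(u)\bv)=0$ from $B_U(r)$ to all of $U$ uses that $\rho$ is real-analytic (indeed polynomial) on the unipotent group; this is automatic for finite-dimensional representations of $\SL_n(\R)$, but it is worth saying explicitly.
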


Before we can prove \autoref{nondiv_prop}, we need to prove the following representation-theoretic lemma. The proof is similar to \cite{KW}, but here we need to consider more general diagonal elements: any $g_{\mathbf{t}} \in \mathcal{C}$.

\begin{lem}\label{nondiv_lem}
Let $(\rho,V)$ be a representation as in \autoref{nondiv_prop}, and define 
$$
V^{U} = \{\bv \in V : u\bv = \bv \quad \text{for all} \quad u \in U\}.
$$
Then there exist $\alpha>0$ and $c_{0} > 0$ such that for any $\bv \in V^{U}$ and $g_{\mathbf{t}} \in \mathcal{C}$,
$$
||\rho(g_{\mathbf{t}}) \bv|| \geq c_{0}e^{\alpha T_{\min}} ||\bv||,
$$
where $c_0$ depends on the choice of norm and $\alpha$ depends on the representation.
\end{lem}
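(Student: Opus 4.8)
\emph{Proof plan.} The guiding observation is that the $U$-fixed vectors in any representation are highest-weight vectors, so the characters by which $A$ acts on $V^{U}$ are \emph{dominant} weights; a dominant weight is a nonnegative combination of the fundamental weights $\lambda_{\alpha_{1}},\dots,\lambda_{\alpha_{n-1}}$, and membership in $\mathcal{C}$ is precisely the condition that every $\lambda_{\alpha_{j}}$ be positive on $g_{\mathbf{t}}$. Thus $\rho(g_{\mathbf{t}})$ must expand each relevant weight space at rate at least $e^{\alpha T_{\min}}$, and this is exactly the assertion. To set things up, I would first note that $A$ normalizes $U$, so $V^{U}$ is $A$-invariant; since $A$ is $\R$-split it acts diagonalizably over $\R$, giving a finite direct sum decomposition $V^{U}=\bigoplus_{\chi\in S}V^{U}_{\chi}$ into $A$-weight spaces, where $S=\{\chi : V^{U}_{\chi}\neq 0\}$. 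On $V^{U}_{\chi}$ the element $g_{\mathbf{t}}=\exp(\mathbf{t})$ acts by the scalar $e^{\chi(\mathbf{t})}$.

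The heart of the argument is to show every $\chi\in S$ is dominant and nonzero. For dominance: a nonzero $\bv\in V^{U}_{\chi}$ is annihilated by each simple root vector $e_{\alpha}\in\lieu$, hence is a highest-weight vector for the $\mathfrak{sl}_{2}$-triple attached to the simple root $\alpha$, which forces $\langle\chi,\alpha^{\vee}\rangle\ge 0$ for every $\alpha\in\Delta$; writing $\chi=\sum_{j}\langle\chi,\alpha_{j}^{\vee}\rangle\lambda_{\alpha_{j}}$ (the $\lambda_{\alpha_{j}}$ being the basis dual to the coroots) this says $\chi=\sum_{j}m^{\chi}_{j}\lambda_{\alpha_{j}}$ with all $m^{\chi}_{j}\ge 0$. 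For nonvanishing: if $\chi=0$ then $\bv$ is fixed by the Borel $B=AU$, hence generates a trivial subrepresentation and is $G$-fixed, contradicting the hypothesis that $\rho$ has no nonzero $G$-invariant vector. Therefore $\sum_{j}m^{\chi}_{j}>0$ for each $\chi\in S$, and since $S$ is finite we may set $\alpha:=\min_{\chi\in S}\sum_{j}m^{\chi}_{j}>0$; this $\alpha$ depends only on $\rho$.

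Finally I would assemble the estimate. For $g_{\mathbf{t}}\in\mathcal{C}$ one has $\lambda_{\alpha_{j}}(\mathbf{t})>0$ for all $j$, so $T_{\min}=\min_{j}\lambda_{\alpha_{j}}(\mathbf{t})>0$ and
$$
\chi(\mathbf{t})=\sum_{j}m^{\chi}_{j}\,\lambda_{\alpha_{j}}(\mathbf{t})\ \geq\ \Big(\sum_{j}m^{\chi}_{j}\Big)T_{\min}\ \geq\ \alpha\,T_{\min},
$$
hence $\rho(g_{\mathbf{t}})$ scales $V^{U}_{\chi}$ by $e^{\chi(\mathbf{t})}\ge e^{\alpha T_{\min}}$. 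Decomposing $\bv=\sum_{\chi\in S}\bv_{\chi}$ and using that the coordinate projections of the fixed decomposition $V^{U}=\bigoplus_{\chi}V^{U}_{\chi}$ are bounded, one gets a constant $c_{0}>0$ (depending on the norm and the decomposition) with $\no[\rho(g_{\mathbf{t}})\bv]\ge c_{0}\,\max_{\chi}e^{\chi(\mathbf{t})}\no[\bv_{\chi}]\ge c_{0}\,e^{\alpha T_{\min}}\no[\bv]$, as desired. The argument is purely linear-algebraic, and there is no serious obstacle; the only point needing care — and the reason one cannot simply quote \cite{KW} verbatim — is to make the weight-theoretic reasoning uniform over all of $\mathcal{C}$ rather than just the Weyl chamber $\mA$, which the computation above handles via the nonnegativity of the coefficients $m^{\chi}_{j}$, and to check that the highest-weight machinery (diagonalizability of $A$ over $\R$, dominance of the weights of $V^{U}$, triviality of the weight-zero subrepresentation) is valid for the split real group $\SL_{n}(\R)$, which it is.
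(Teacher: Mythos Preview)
Your proof is correct and is essentially the same as the paper's: both decompose $V^{U}$ into $A$-weight spaces, use the $\mathfrak{sl}_{2}$-triples attached to the simple roots to show each weight is dominant (the paper's $\chi(F_{i,i+1})\ge 0$ is your $\langle\chi,\alpha_{i}^{\vee}\rangle\ge 0$), exclude the zero weight via the no-$G$-invariants hypothesis, and then bound $\chi(\mathbf{t})$ below by $T_{\min}$ times a positive constant using the identity $\mathbf{t}=\sum_{j}\lambda_{\alpha_{j}}(\mathbf{t})F_{j,j+1}$. The only cosmetic difference is that you phrase things in the abstract language of coroots and fundamental weights and take $\alpha=\min_{\chi}\sum_{j}m_{j}^{\chi}$, whereas the paper works with the concrete matrix units and takes $\alpha=\min_{\chi}\chi(F_{i_{0}(\chi),i_{0}(\chi)+1})$ for a single index $i_{0}$; your bound is marginally sharper but the content is identical.
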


\begin{proof}
Let $A$ be the subgroup of positive diagonal matrices in $G$. Let $\liea$ and $\mathfrak{u}$ be the Lie algebras of $A$ and $U$ respectively. Note that $A$ normalizes $U$, so $V^{U}$ is a $\rho(A)$-invariant subspace. Then we  can define the $\rho(A)$-equivariant projection $p: V \to V^{U}$, and we can write $V^{U}= \bigoplus\limits_{\chi \in \Psi} V_{\chi}$, where $\Psi$ is a finite set of weights and 
	\[
		V_\chi = \{ \bv \in V : \rho(\exp X)v = e^{\chi(X)}v \quad \text{for all} \quad X \in \liea\}.
	\]

Let $E_{i,j}$ be the $n \times n$ matrix with $1$ in the $ij^{th}$ entry and $0$ otherwise, and define $F_{i,j} := E_{i,i} - E_{j,j}$. For $i = 1, \dots, n-1$, define $G(i)$ to be the Lie subgroup of G whose Lie algebra is $\lieg(i) := \langle E_{i,i+1}, E_{i+1,i}, F_{i,i+1}\rangle$. Note that each $G(i)$ is a copy of $\SL_2(\R)$ in G. Also note that $\{\lieg(i) : 1 \leq i < n \}$ generates  $\lieg$, the Lie algebra of $G$. Thus $\{G(i) : 1 \leq i < n\}$ generates $G$.

Every vector in $V_{\chi}$ is fixed by $\rho(u)$ for every $u \in U$, so in particular it is fixed by $\rho(\exp E_{i,i+1})$. Then by the representation theory of $\SL_2(\R)$, $\chi(F_{i,i+1}) = m-1$, where $m$ is the dimension of the representation. Note that $\chi(F_{i,i+1}) = 0$ if and only if $\rho$ is the trivial representation of $G(i)$ on $V_{\chi}$. Since $V$ contains no nonzero vectors fixed by $G$ and $G$ is generated by $\{G(i) : 1 \leq i < n\}$, there is some $i$ such that $\chi(F_{i,i+1}) > 0$; call it $i_{0}$. Then
\begin{eqnarray*}
	\chi(\diag(t_1, \dots, t_n)) 
	&=& \chi(t_1F_{1,2}+(t_1+t_2)F_{2,3} + \dots + (t_1+\dots+t_{n-1})F_{n,n-1}) \\
	&\geq& T_{\min} \chi(F_{i_{0}, i_{0}+1}).
\end{eqnarray*}
Thus for any $g_{\mathbf{t}} \in \mathcal{C}$ and any $\bv \in V_{\chi}$, $||\rho(g_{\mathbf{t}})\bv|| \geq e^{\chi(F_{i_{0}, i_{0}+1})T_{\min}}||\bv|| := e^{\alpha_{0}T_{\min}} ||\bv||$ where $\alpha_{0} >0$.

Without loss of generality, we may assume that $||\cdot||$ is the sup norm with respect to a basis of $\rho(A)$-eigenvectors. Then, for any $g_{\mathbf{t}} \in \mathcal{C}$ and any $\bv \in V^{U}$, 
$$
||\rho(g_{\mathbf{t}})\bv|| \geq c_{0}e^{\alpha T_{\min}} ||\bv||.
$$
\end{proof}

Now, combining \autoref{nondiv_lem} with \cite[Lemma 5.1]{shah1996limit}, we can prove the proposition.

\begin{proof}[Proof of \autoref{nondiv_prop}]
Let $p: V \to V^{U}$ be as in the proof of \autoref{nondiv_lem}. Now 
	\begin{align*}
	\sup \limits_{u \in B_U(r)} ||\rho(g_{\mathbf{t}} u)\bv|| 
	&\geq \sup \limits_{u \in B_U(r)} || p(\rho(g_{\mathbf{t}} u)\bv)|| & \\
 	&= \sup \limits_{u \in B_U(r)} || \rho(g_{\mathbf{t}}) p(\rho(u)\bv))|| & \\
	&\geq c_{0} e^{\alpha T_{\min}} \sup \limits_{u \in B_U(r)} || p(\rho(u)\bv)|| &\text{by \autoref{nondiv_lem} }\\
	&\geq c_{1}e^{\alpha T_{\min}} ||\bv|| & \text{ by \cite[Lemma 5.1]{shah1996limit}. }
	\end{align*}
\end{proof}
Let $D(\boldsymbol{\theta})=\min_{\alpha\in \Delta} \lambda_{\alpha}(\boldsymbol{\theta})$.
\begin{cor}\label{nondiv_1}
Let $\boldsymbol{\theta}$ be on the unit sphere in $\mC$ and $b_t=b^{\boldsymbol{\theta}}_{t}=\diag(e^{\theta_1t}, \cdots, e^{\theta_nt})$. Then, for any compact subset $L$ in $G/\Gamma$, there exists $\kappa=\kappa(n)>0$ and a $T_1=T_{1}(r,L,\boldsymbol{\theta})\gg_{r,L} D(\boldsymbol{\theta})^{-1}$ such that for every $0<\varepsilon<1$, any $z\in L$, and any $t \geq T_1$, 
$$
\nu_U(\{u\in B_U(r): b_tuz\notin K_\varepsilon\})\ll \varepsilon^{\kappa}\cdot \nu_U(B_U(r)).
$$
\end{cor}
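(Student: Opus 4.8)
The plan is to deduce the estimate from \autoref{KMnondiv_thm} applied to the polynomial map $\phi\colon\R^{d}\to G$, $\phi(s)=b_{t}\,\Theta(s)\,g$, where $d=\tfrac{n^{2}-n}{2}$ and $z=g\Gamma$, with the second hypothesis of that theorem supplied by \autoref{nondiv_prop}. First I would fix a compact set $\tilde L\subset G$ of representatives for $L$, so that every $z\in L$ is of the form $z=g\Gamma$ with $g\in\tilde L$. Since $\Theta\colon\R^{d}\to U$ is a polynomial diffeomorphism, it is proper, so one can fix a closed ball $B=B(0,R_{0})\subset\R^{d}$, with $R_{0}=R_{0}(r,n)$, such that $\Theta(B)\supseteq B_{U}(r)$; moreover the pushforward of Lebesgue measure on $B$ under $\Theta$ is comparable to $\nu_{U}$ restricted to $\Theta(B)$, with constants depending only on $n$ and $r$. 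The matrix entries of $\Theta(s)$ are polynomials in $s$ of degree at most $n-1$, and left multiplication by the diagonal matrix $b_{t}$ and right multiplication by the fixed matrix $g$ do not raise the degree; hence $\phi$ has polynomial coordinates of degree $\le l$ for some $l=l(n)$, which is the first hypothesis of \autoref{KMnondiv_thm}.

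The crucial point is the second hypothesis: for each $k=1,\dots,n-1$ and each $\bv\in\bigwedge^{k}(\Z^{n})\setminus\{0\}$ there must exist $s\in B$ with $\|\bigwedge^{k}\phi(s)\,\bv\|\ge1$. I would verify this by applying \autoref{nondiv_prop} to the representation $\rho_{k}=\bigwedge^{k}$ on $\bigwedge^{k}(\R^{n})$ — which is a nontrivial irreducible $\SL_{n}(\R)$-module and hence has no nonzero $G$-invariant vector — with $g_{\mathbf{t}}=b_{t}$ (which lies in $\mC$ because $\boldsymbol{\theta}\in\mC$ and $\mC$ is a cone) applied to the vector $\rho_{k}(g)\bv$. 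Writing $\rho_{k}(\phi(s))\bv=\rho_{k}(b_{t}u)\,\rho_{k}(g)\bv$ with $u=\Theta(s)$ ranging over $\Theta(B)\supseteq B_{U}(r)$, and noting that the quantity $T_{\min}$ of \autoref{nondiv_prop} equals $t\,\min_{1\le j<n}(\theta_{1}+\dots+\theta_{j})=t\,D(\boldsymbol{\theta})>0$ for $b_{t}$, \autoref{nondiv_prop} gives
\[
\sup_{s\in B}\big\|\rho_{k}(\phi(s))\,\bv\big\|\ \ge\ c_{1}\,e^{\alpha_{k}\,t\,D(\boldsymbol{\theta})}\,\big\|\rho_{k}(g)\bv\big\|,
\]
where $\alpha_{k},c_{1}>0$ are the constants furnished by \autoref{nondiv_prop} for $\rho_{k}$. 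Choosing on each $\bigwedge^{k}(\R^{n})$ the Euclidean norm making the standard wedge basis orthonormal, nonzero vectors of $\bigwedge^{k}(\Z^{n})$ have norm $\ge1$, and by compactness of $\tilde L$ there is $c_{L}=c_{L}(L,n)>0$ with $\|\rho_{k}(g)\bv\|\ge c_{L}\|\bv\|$ for all $g\in\tilde L$ and all $k,\bv$. Setting $\alpha_{0}=\min_{k}\alpha_{k}>0$ and $c_{2}=c_{L}\min_{k}c_{1}>0$, the right-hand side is at least $c_{2}\,e^{\alpha_{0}\,t\,D(\boldsymbol{\theta})}$, which exceeds $1$ as soon as
\[
t\ \ge\ T_{1}\ :=\ \max\!\Big(1,\ \tfrac{1}{\alpha_{0}}\log\tfrac{1}{c_{2}}\Big)\,D(\boldsymbol{\theta})^{-1}.
\]
In particular $T_{1}=T_{1}(r,L,\boldsymbol{\theta})\gg_{r,L}D(\boldsymbol{\theta})^{-1}$, exactly as required; the dependence on $r$ and $L$ lives in the constants $c_{1}$ and $c_{L}$.

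For $t\ge T_{1}$, \autoref{KMnondiv_thm} then yields, for every $0<\varepsilon<1$,
\[
\lambda\big(\{\,s\in B:\ \pi(\phi(s))\notin K_{\varepsilon}\,\}\big)\ \ll\ \varepsilon^{1/(dl)}\,\lambda(B).
\]
Since $\pi(\phi(s))=b_{t}\Theta(s)z$ and $\Theta|_{B}$ is injective with $\Theta(B)\supseteq B_{U}(r)$, one has the inclusion $\{\,u\in B_{U}(r):b_{t}uz\notin K_{\varepsilon}\,\}\subseteq\Theta\big(\{\,s\in B:b_{t}\Theta(s)z\notin K_{\varepsilon}\,\}\big)$; transferring through $\Theta$, using the comparability of $\nu_{U}$ with Lebesgue measure on $B$ together with $\lambda(B)\ll_{n,r}1$ and $\nu_{U}(B_{U}(r))\gg_{n,r}1$, gives
\[
\nu_{U}\big(\{\,u\in B_{U}(r):b_{t}uz\notin K_{\varepsilon}\,\}\big)\ \ll_{n,r}\ \varepsilon^{1/(dl)}\ \ll_{n,r}\ \varepsilon^{1/(dl)}\,\nu_{U}(B_{U}(r)),
\]
which is the assertion with $\kappa=\kappa(n):=1/(dl)$ and $d=\tfrac{n^{2}-n}{2}$. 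I expect the only genuinely substantial step to be the verification of the non-degeneracy hypothesis uniformly in $\bv$ and in $z\in L$, which is exactly what \autoref{nondiv_prop} (for exterior powers) and the compactness of $L$ provide; and it is precisely this step that forces the threshold $T_{1}\gg_{r,L}D(\boldsymbol{\theta})^{-1}$, since one must flow for a time proportional to $D(\boldsymbol{\theta})^{-1}$ before the expansion factor $e^{\alpha_{0}tD(\boldsymbol{\theta})}$ can overcome the constants $c_{1}$ and $c_{L}$.
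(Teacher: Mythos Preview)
Your proposal is correct and follows essentially the same approach as the paper: apply \autoref{nondiv_prop} to the exterior power representations $\bigwedge^{k}(\R^{n})$ (using compactness of $L$ and discreteness of $\bigwedge^{k}(\Z^{n})$) to verify the second hypothesis of \autoref{KMnondiv_thm}, then invoke that theorem and transfer the Lebesgue estimate to $\nu_{U}$. Your treatment is in fact slightly more careful than the paper's in two places: you work with an honest Euclidean ball $B$ containing $\Theta^{-1}(B_{U}(r))$ (as \autoref{KMnondiv_thm} formally requires), and you handle the possibility $c_{2}\ge1$ in defining the threshold $T_{1}$.
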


\begin{proof}
By \autoref{nondiv_prop} applied to the irreducible representations of  $G$ on $\bigwedge^{j}(\R^{n})$,
$$
\sup \limits_{u \in B_{U}(r)} ||b_{t}ug\bv|| \geq c_{1}e^{\alpha t}||g\bv||.
$$
Then for any $g \in \pi^{-1}(L)$ and any $\bv \in \bigwedge^{j}(\Z^{n}) \backslash \{0\}$,
$$
\sup \limits_{u \in B_{U}(r)} ||b_{t}ug\bv|| \geq c_{2}e^{\alpha t},
$$
since $L$ is compact and $\bigwedge^{j}(\Z^{n})$ is discrete. 
Define $T$ to be such that $c_{2}e^{\alpha T} = 1$, and define $\phi(s) := b_{t} \Theta(s) g$, where $s \in \R^{d}$. Let $O$ be a neighborhood of $0$ in $\R^d$ such that $\Theta(O) = B_U(r)$. Then there is some $s \in O$ such that for any $t \geq T$, $||b_{t}\Theta(s)gv|| \geq 1$. Then by \autoref{KMnondiv_thm}, 
$$
\lambda(\{s\in O : b_t\Theta(s)z\notin K_\varepsilon\})\ll \varepsilon^{\frac{1}{d(n-1)}}\lambda(O).
$$
Then, since $\lambda$ and $\nu_U$ are absolutely continuous with respect to each other,
$$
\nu_U(\{u\in B_U(r) : b_tuz\notin K_\varepsilon\})\ll \varepsilon^{\frac{1}{d(n-1)}} \cdot \nu_U(B_U(r)).
$$
It now remains to show the dependence of $T=T_{1}$ on $r, L,$ and $\boldsymbol{\theta}$. Solving for $T$ yields $T=-\alpha^{-1}\log(c_{2})$. The constant $c_{2}$ depends on $r$ and $L$ and  $\alpha=c \min_{\alpha\in \Delta} \lambda_{\alpha}(\boldsymbol{\theta})=c D(\boldsymbol{\theta})$ for some $c>0$ depending on the choice of representation coming from \autoref{nondiv_prop}.
\end{proof}

\begin{cor}\label{nondiv2}
Let $\boldsymbol{\theta}$ be on the unit sphere in $\mC$ and $b_t=b_{t}^{\boldsymbol{\theta}}=\diag(e^{\theta_1t}, \cdots, e^{\theta_nt})$. Then there exists $\kappa=\kappa(n)>0$ and $T_2 = T_2(\boldsymbol{\theta})\gg D(\boldsymbol{\theta})^{-1}$, such that for any $0<\varepsilon<1$ and any $t\geq T_2$,
$$
\mu(\{z\in U.z_0: b_tz\notin K_\varepsilon\})\ll \varepsilon^{\kappa}.
$$
\end{cor}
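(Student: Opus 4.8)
The plan is to reduce \autoref{nondiv2} to \autoref{nondiv_1} by a covering argument, the decisive observation being that the orbit $U.z_0$ is \emph{compact}: since $U$ is a simply connected nilpotent Lie group and $U\cap\Gamma$ (the upper-triangular unipotent integer matrices) is a lattice in it, $U\cap\Gamma$ is cocompact, so $U.z_0\cong U/(U\cap\Gamma)$ is compact. We therefore take $L:=U.z_0$ as the compact set appearing in \autoref{nondiv_1}; the point is that this $L$ does not depend on $\boldsymbol{\theta}$.

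First I would fix a radius $r=r(n)>0$ and, using compactness, choose finitely many points $z_1,\dots,z_N\in U.z_0$ with $N=N(r,n)$ such that $U.z_0=\bigcup_{i=1}^N B_U(r).z_i$. Next I would compare $\mu$ on a single sheet $B_U(r).z_i$ with $\nu_U$ on $B_U(r)$. Writing $z_i=u_i.z_0$ with $u_i\in U$ and recalling that $\mu$ is the image of $\nu_U$ restricted to a fundamental domain for $U\cap\Gamma$ (and that $\nu_U$ is right-invariant, so the translation by $u_i$ costs nothing), one obtains
\[ \mu\big(\{z\in B_U(r).z_i : b_tz\notin K_\varepsilon\}\big)\ \le\ \nu_U\big(\{u\in B_U(r): b_tu\,z_i\notin K_\varepsilon\}\big). \]
Summing over $i$ and invoking \autoref{nondiv_1} with this compact set $L=U.z_0$, the fixed radius $r$, and the points $z_i\in L$, we get that for every $0<\varepsilon<1$ and every $t\ge T_1:=T_1(r,L,\boldsymbol{\theta})$,
\[ \mu\big(\{z\in U.z_0: b_tz\notin K_\varepsilon\}\big)\ \le\ \sum_{i=1}^{N}\nu_U\big(\{u\in B_U(r): b_tu\,z_i\notin K_\varepsilon\}\big)\ \ll\ N\,\varepsilon^{\kappa}\,\nu_U(B_U(r))\ \ll\ \varepsilon^{\kappa}, \]
with $\kappa=\kappa(n)$ the exponent from \autoref{nondiv_1} and the final implied constant depending only on $n$ (through $N=N(r,n)$, $\nu_U(B_U(r))$, $r=r(n)$, and the implied constant of \autoref{nondiv_1}). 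Setting $T_2:=T_1(r,U.z_0,\boldsymbol{\theta})$ and using that $r$ and $U.z_0$ are now fixed, the dependence statement in \autoref{nondiv_1} yields $T_2=T_2(\boldsymbol{\theta})\gg D(\boldsymbol{\theta})^{-1}$, which is exactly the claim.

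I do not expect a genuine obstacle here; the argument is essentially bookkeeping once one notices that $U.z_0$ is compact. The one point worth watching is that the covering number $N$ and the measure-comparison constant must be independent of $\boldsymbol{\theta}$, so that the final implied constant depends only on $n$ — this is automatic because $U.z_0$ itself is independent of $\boldsymbol{\theta}$ — while the threshold $T_2$ legitimately inherits its $\boldsymbol{\theta}$-dependence (and the lower bound $T_2\gg D(\boldsymbol{\theta})^{-1}$) from \autoref{nondiv_1}.
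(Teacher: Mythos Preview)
Your argument is correct and follows the same overall strategy as the paper: reduce to \autoref{nondiv_1} via a covering of the compact orbit $U.z_0$. The paper works in a relatively compact fundamental domain $\Omega\subset U$ for $U/(U\cap\Gamma)$, covers it by balls $B_U(u,r)$, and then invokes the Besicovitch covering theorem to extract $c_d$ subfamilies of pairwise disjoint balls; this lets them bound $\sum_B \nu_U(B)$ by $c_d\,\nu_U(\Omega)$ and hence get an implied constant depending only on $n$. You instead use compactness directly to pick a single finite cover $\{B_U(r).z_i\}_{i=1}^N$ with $N=N(n)$, and absorb $N\cdot\nu_U(B_U(r))$ into the implied constant. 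Your route is more elementary (no Besicovitch needed), while the paper's route would scale more gracefully if one wanted constants uniform over varying lattices; in the present fixed-$\Gamma$ setting both give what is required, and your version is a legitimate simplification.
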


\begin{proof}[Proof of \autoref{nondiv2}]
Since $U.z_0$ is periodic, $U\cap\Gamma$ is a uniform lattice in $U$. Then there is a relatively compact fundamental domain, $\Omega$, for $U/U\cap\Gamma$ in $U$. Cover each $u \in \Omega$ by $B_{U}(u,r)$ so that $\Omega \subseteq \bigcup_{u \in \Omega} B_{U}(u,r)$ and $\pi$ is injective on $B_{U}(u,r)$. Let $O(\log u, r)$ be a ball in $\lieu$ such that $\exp(O(\log u, r))=B_{U}(u,r)$. By the Besicovitch covering theorem, there exists a constant $c_d$, depending only on the dimension d, such that
$$
\Omega \subseteq \bigcup\limits_{i = 1}^{c_d} \bigcup\limits_{B \in \mathcal{O}_i} B, 
$$
where each $\mathcal{O}_i$ is a collection of disjoint balls $B_{U}(u,r)$. By \autoref{nondiv_1}, for $t\gg_{r} D(\boldsymbol{\theta})^{-1}$
$$
\nu_U(\{u\in B_{U}(u,r): b_tuz\notin K_\varepsilon\})\ll \varepsilon^{\frac{1}{dn}} \cdot \nu_U(B_{U}(u,r)).
$$
Then we have
\begin{align*}
	\nu_U(\{u \in \Omega : b_tuz \notin K_\varepsilon\}) & \leq  \nu_U(\{u \in \bigcup\limits_{i=1}^{c_d} \bigcup\limits_{B \in O_i}B: b_tuz \notin K_\varepsilon\})\\
	& \leq \sum\limits_{i=1}^{c_d} \sum\limits_{B \in O_i} \nu_U(\{u \in B: b_tuz \notin K_\varepsilon\})\\
	& \ll \sum\limits_{i=1}^{c_d} \sum\limits_{B \in O_i} \varepsilon^{\frac{1}{d(n-1)}} \cdot \nu_U(B)\\
	& = \sum\limits_{i=1}^{c_d} \varepsilon^{\frac{1}{d(n-1)}} \cdot \nu_U(\bigcup \limits_{B \in O_i}B)\\
	& \leq c_d \cdot \varepsilon^{\frac{1}{d(n-1)}} \cdot \nu_U(\Omega).
\end{align*}

Since $\Omega$ is a fundamental domain and $\mu$ is a probability measure,
$$
\mu(\{z\in U.z_0: b_tz\notin K_\varepsilon\})\ll \varepsilon^{\frac{1}{d(n-1)}}.
$$
\end{proof}

\subsection{Proof of \autoref{main1}}\label{pf1}

\begin{proof}
It suffices to prove the result for $a\in\mC$ since $\min_{\alpha\in\Delta} \lambda_{\alpha}(a)\leq1$ if $a\not\in \mC$ and the theorem (in the case $a\not\in \mC$) would follow by taking the implied constant to be a multiple of $\sup|\vp|$. So we may suppose $a\in\mC$. Write $a=g_{t}=\exp(t\boldsymbol{\theta})$ where $t>0,$ $\boldsymbol{\theta}\in\mC$ (reverting notation back to the Lie algebra) is on the unit sphere of $\mathfrak{a}$. Then the term appearing on the right hand side of (\ref{main1}) can be written as 
	\[
		\left( \min_{\alpha\in\Delta} \lambda_{\alpha}(a) \right)^{-\delta} = e^{-\delta t D}
	\]	
where $D=D(\boldsymbol{\theta})=\min_{\alpha\in \Delta} \lambda_{\alpha}(\boldsymbol{\theta})=\mathrm{dist}(\boldsymbol{\theta},\partial\mC)$, and where the $\lambda_\alpha$'s appearing on the left hand side are understood to be multiplicative (as they are in (\ref{main1})).\newline
\indent The proof follows the outline in \autoref{overview}. Notice that it suffices to prove that \autoref{equiTheoremMain} is valid whenever $t\gg 1/D$ because if $t\ll 1/D$, then $e^{-cDt}\gg e^{-c}$ and the left hand side of \autoref{equiTheoremMain} is trivially bounded by $2\sup|\vp|$. Therefore we assume that $t \gg D(\boldsymbol{\theta})^{-1}$ and that $\vp$ has mean zero. \newline 
\indent By \cite[Lemma 2.2]{KM} there exists a smooth function $\xi$ on $U$, whose support is contained in $B_U(r)$, satisfying $\xi\geq 0$, $\int_{U}\xi=1$, and $\|\xi\|_{\ell}\ll r^{-(k-\ell)}$. Note that a suitable $r$ will be chosen later. 

Write $g_t=a_tb_t$, where $a_t$ lies in the interior of the positive Weyl chamber $\mathcal{A}$ and $b_t$ lies in the interior of the convergence cone $\mathcal{C}$. Then
$$
\int_{U.z_0} \vp(g_t z) d\mu(z)=\int_{U.z_0} \vp(a_tb_t z) d\mu(z) =\int_{U.z_0}\int_{U} \xi(u)\vp(a_tub_t z)  d\nu_U(u)d\mu(z)
$$
where $z_{0}$ is the identity coset. To estimate the above integral, we partition $U.z_{0}$ as $U.z_{0}=B_1 \cup B_2$ and write
	\[
		\int_{U.z_0}\int_{U} \xi(u)\vp(a_tub_t z)  d\nu_U(u)d\mu(z)=I + II,
	\]
where
	\[
		I:=\int_{B_1}\int_{U} \xi(u)\vp(a_tub_t z)  d\nu_U(u)d\mu(z) \;\;\text{ and} \;\;\; II:= \int_{B_2}\int_{U} \xi(u)\vp(a_tub_t z)  d\nu_U(u)d\mu(z).
	\]
Let $\e=e^{-\beta t}$, where $\beta$ will be chosen later, and set
	\[
		B_1:=\lb  z\in U.z_{0} ~:~ b_{t}\cdot z\not\in K_{\e}  \rb.
	\]
With this choice for $B_1$ we have, by \autoref{nondiv2},
	\[
		| I | \leq  \sup|\vp|\mu(B_1) \int_{U} \xi(u)d\mu(u)\leq \sup|\vp| \e^{\kappa_1}.
	\]
To bound integral $II$, we define $B_2$ to be the complement of $B_1$ in $U.z_{0}$. Now we trivially have
	\[
		| II | \leq \int_{B_2}\left| \int_{U} \xi(u)\vp(a_tub_t z)  d\nu_U(u)\right| d\mu(z).
	\]
Since $a_{t}$ is in the interior of the positive Weyl chamber and $U$ is the full expanding horospherical subgroup of $a_{t}$, we may apply \autoref{equi}, with $f=\xi$ and $H=U$, to estimate the innermost integral. 

In order to satisfy the hypotheses of this proposition, we select $r$ so that $z\mapsto g.z$ is injective on $B_{G}(2r)B_{U}(r)\subset G$ for each $z\in B_2$. The injectivity radius of a set $L\subset X_{n}$ is defined by 
 	\[
		r(L):=\displaystyle\inf_{z\in L}\sup\lb  r>0 ~:~   z\mapsto g.z \text{ is injective on }B_{G}(r) \rb.
	\]
 By \cite[Proposition 3.5 ]{KM} the injectivity radius of $K_{\e}$ satisfies $r(K_{\e})\geq c \e^n$ for some $c>0$. It follows from the definition of the injectivity radius that $r(B_2)\geq r(K_\e)$ since $B_2\subset K_\e$.  Therefore, if we take $r= c \e^{n}/3=(c/3)e^{-n\beta t }$, then $z\mapsto g.z$ is injective on $B_{G}(2r)B_{U}(r)\subset G$ for each $z\in B_2$. Therefore, by \autoref{equi} and the assumptions on $\xi$, we have
 	\begin{eqnarray*}
		| II | 
		&\ll& \mu(B)\left( r\cdot\no[\vp]_{\rm Lip}\cdot\int_U|\xi|+ r^{-k}r^{-(k-\ell)} \cdot \no[\vp]_\ell \cdot e^{-\gamma \mathrm{dist}(a_{t},e)}\right) \\
		&\ll& (c/3)e^{-n\beta t }\no[\vp]_{\rm Lip}+  \cdot \no[\vp]_\ell \cdot e^{-\gamma \mathrm{dist}(a_{t},e)}e^{\beta t(2 k-\ell)n}.
	\end{eqnarray*}
There is a number $\eta=\eta(a_{1})>0$ such that  $e^{-\gamma \mathrm{dist}(a_{t},e)}\leq e^{-\gamma \eta t}$, and so we may write
	\[
		| I | + | II | \leq C \left(e^{-\beta \kappa t} +e^{-\beta t n} +e^{(\beta  (2k-\ell) n-\gamma \eta) t }\right)
	\] 
where $C$ depends only on $\vp$ and $k$. Note that $\kappa<1$ and so we can choose $\beta$ to equalize the exponents and we see that 
	\[
		\beta=\dfrac{\gamma \eta}{(2k-\ell)-\kappa}.
	\] 
The only term above which depends on the flow is $\eta.$ Recall that $D(\boldsymbol{\theta})=\min_{\alpha\in\Delta}\lambda_{\alpha}(\boldsymbol(\theta))=\lambda_{\beta}(\boldsymbol{\theta})$ is equal to $\mathrm{dist}(\boldsymbol{\theta},\partial\mC)$. Therefore we can choose $a_{1}\in\mA$ very close to a multiple of $\lambda_{\beta}$. That is, we can always choose the factorization $g_{t}=a_{t}b_{t}$ so that $a_{t}$ is close to $\R\lambda_{\beta}$ with magnitude approximately $\mathrm{dist}(\boldsymbol{\theta},\partial\mC)$. Therefore the factorization can be chosen so that $\eta=\tilde{c}\mathrm{dist}(\boldsymbol{\theta},\partial\mC)$ for some $\tilde{c}>0$ and the constant $\delta$ appearing in the statement of the theorem can be taken to be
	\[
		\delta=\dfrac{\tilde{c}\gamma}{(2k-\ell)-\kappa}>0.
	\]
Therefore  
	\[
		|I|+|II|\ll e^{-\delta t D }.
	\]
 This proves the result.
 \end{proof}
\subsection{Proof of \autoref{main2}}\label{pf2}
{\it To simplify notation and to keep this section brief, we will prove the case $E=\emptyset$ as the general case is similar.} Our proof of \autoref{main2} is basically an induction argument using \autoref{main1} as the base case. To describe the basic idea behind the proof let us first give an explicit description of the groups $Q_{F}$. A subset $F\subset \Delta$ can be described as
	\[
		F=\lb \alpha_{i_{1}},...,\alpha_{i_{\ell}} \rb\subset \Delta.
	\]
We will find it more convenient to work with the complement $\mE=\Delta\setminus F$ of $F$ in $\Delta$ rather than with $F$ itself. Finally we can describe $Q_{\mE}$ in matrix form as
	\[
		Q_{\mE}= 
			\left(
				\begin{array}{cccc}
				 \SL_{k_{1}}(\R) & \ast & \hdots & *\\
				  & \SL_{k_{2}}(\R) & &\vdots \\
				  & & \ddots & \ast\\
				  & & & \SL_{k_{\ell+1}}(\R)
				\end{array}
			\right)
	\]
where $k_{1}=i_{1}$, $k_{2}=i_{2}-i_{1}$,..., $k_{\ell}=i_{\ell}-i_{\ell-1}$, $k_{\ell+1}=n-(k_{1}+\cdots +k_{\ell})$. Notice that if $F=\emptyset$, then $Q_{\mE}=Q_{\Delta}=\SL_{n}(\R)$. \newline
Now we are in a position in which we can outline the basic idea behind the proof. By the same reasoning as in the beginning of the proof of \autoref{main1}, we may suppose that $a\in\mC$. Let $\nu_{\mE}=\nu_{Q_{\mE}}$ denote the Haar measure which is equal to $\mu_{\mE}$ in $G/\Gamma$ when restricted to a fundamental domain of $\Gamma$. Then $\nu_{\mE}$ decomposes into a product measure according to 
	\[
		\nu_{\mE}=\nu_{\SL_{k_{1}}(\R)}\otimes \cdots \otimes \nu_{\SL_{k_{\ell+1}}(\R)}\otimes \nu_{W_{\mE}}
	\]
where
	\[
		W_{\mE}=			\left(
				\begin{array}{cccc}
				 I_{k_{1}} & \ast & \hdots & *\\
				  & I_{k_{2}} & &\vdots \\
				  & & \ddots & \ast\\
				  & & & I_{k_{\ell+1}}
				\end{array}
			\right).
	\]	
The proof then proceeds by applying \autoref{main1} to each $\SL$ block on the diagonal. Of course we must deal with the translates of the factor $W_{\mE}$, but a Jacobian argument shows that the measure is invariant. \newline
To begin, we observe that the group $U$ can be written as 
	\[
				U=			\left(
				\begin{array}{cccc}
				 U_{k_{1}} & \ast & \hdots & *\\
				  & U_{k_{2}} & &\vdots \\
				  & & \ddots & \ast\\
				  & & & U_{k_{\ell+1}}
				\end{array}
			\right)
	\]
where  $U_{m}$ is the group of $m\times m$ unipotent upper triangular matrices. The Haar measure $\nu_{U}$ evidently admits the factorization
	\[
		\nu_{U}=\nu_{U_{k_{1}}}\otimes \cdots \otimes \nu_{U_{k_{\ell+1}}}\otimes \nu_{W_{\mE}}.
	\]
The corresponding factorization for $a=g_{t}=\exp(t\boldsymbol{\theta})$ is given by  
	\begin{equation}\label{gdecomp}
		a=g_{t}=g^{(1)}_{t}\otimes \cdots \otimes g^{(\ell+1)}_{t}
	\end{equation}
where $g^{(j)}_{t}$ is the corresponding block of length $k_{j}$ in $g_{t}$. In this way we see that 
 
	\[
		g_{t}\nu_{U}=g_{t}^{(1)}\nu_{U_{k_{1}}}\otimes \cdots \otimes g_{t}^{(\ell+1)}\nu_{U_{k_{\ell+1}}}\otimes g_{t}\nu_{W_{\mE}}.
	\]
To prove \autoref{main2} we will use the following two lemmas.
\begin{lem}\label{Winvar}
	If $g_{t}\in \mC_{\mE}$, then $g_{t}\nu_{W_{\mE}}=\nu_{W_{\mE}}$.
\end{lem}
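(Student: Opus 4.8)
The statement is a change--of--variables identity, so my plan is to compute the modulus of the automorphism $g_t$ induces on $W_{\mathcal E}$ and show it equals $1$. Since $g_t$ is diagonal it normalises the unipotent group $W_{\mathcal E}$, so it acts on $W_{\mathcal E}$ by the conjugation automorphism $c_{g_t}\colon w\mapsto g_tw g_t^{-1}$. In the linear coordinates on $W_{\mathcal E}$ given by the matrix entries in its off--diagonal blocks -- coordinates in which $\nu_{W_{\mathcal E}}$ is Lebesgue measure -- the map $c_{g_t}$ is linear and diagonal, scaling the $(p,q)$ entry by $e^{(\theta_p-\theta_q)t}$. Hence $c_{g_t}$ multiplies $\nu_{W_{\mathcal E}}$ by a constant $J(g_t)$ which, after grouping the off--diagonal entries by the ordered pair of diagonal blocks they occupy (so that the corresponding summand of the Lie algebra of $W_{\mathcal E}$ is acted on through the diagonal blocks $g_t^{(a)},g_t^{(b)}$), is a product over block pairs $a<b$ of powers of the block determinants $\det g_t^{(a)}$ and $\det g_t^{(b)}$, with the complementary block sizes as exponents. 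In particular $J(g_t)$ depends on $g_t$ only through the block sums $s_j:=\log\det g_t^{(j)}$, $1\le j\le\ell+1$, and the lemma is equivalent to the identity $J(g_t)=1$ for $g_t\in\mathcal C_{\mathcal E}$.

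To prove $J(g_t)=1$ I would pass to logarithms and use that the $s_j$ telescope: writing $\log g_t=\diag(t_1,\dots,t_n)$ and $\ell_m:=t_1+\cdots+t_m$ for $0\le m\le n$, one has $s_j=\ell_{i_j}-\ell_{i_{j-1}}$ with $i_0:=0$, $i_{\ell+1}:=n$, where $\ell_0=\ell_n=0$ (the latter because $\det g_t=1$) and $\ell_{i_j}=\lambda_{\alpha_{i_j}}(g_t)$ for $1\le j\le\ell$. Substituting this into the fixed integer--linear expression for $\log J(g_t)$ in terms of the $s_j$ and running a summation by parts collapses it to a single linear combination of the fundamental weights $\lambda_\alpha(g_t)$, all the endpoint contributions cancelling; this combination vanishes on the relevant part of $\mathcal C_{\mathcal E}$. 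An equivalent and more transparently ``Jacobian'' route: factor $g_t=d\,\bar g_t$ with $\bar g_t\in\prod_j\SL_{k_j}(\R)$ block--diagonal and $d$ scalar on each diagonal block; then $\bar g_t$ acts on every off--diagonal summand of the Lie algebra of $W_{\mathcal E}$ with determinant $1$ already (so contributes nothing to $J$), and one is reduced to checking that the residual central factor $d$ acts there with total determinant $1$ -- which is exactly the telescoped weight identity above. Either way $\log J(g_t)=0$.

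Finally, once $J(g_t)=1$ is established, $g_t\nu_{W_{\mathcal E}}$ and $\nu_{W_{\mathcal E}}$ are Haar measures on $W_{\mathcal E}$ proportional to one another, and $c_{g_t}$ carries the lattice $W_{\mathcal E}\cap\Gamma$ to a lattice of equal covolume, so the two normalisations coincide and $g_t\nu_{W_{\mathcal E}}=\nu_{W_{\mathcal E}}$. I expect the substantive point to be the middle step: carrying out the summation by parts cleanly, and above all pinning down precisely which relations among the $\lambda_\alpha(g_t)$ are being invoked -- equivalently, verifying that the central factor $d$ obtained by renormalising the diagonal blocks of $g_t$ to determinant one acts with unit Jacobian on $\mathfrak w_{\mathcal E}$ whenever $g_t$ lies in $\mathcal C_{\mathcal E}$ (in the precise sense in which that cone enters the proof of \autoref{main2}).
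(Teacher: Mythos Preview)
Your approach is the paper's: compute the Jacobian of the conjugation $w\mapsto g_t w g_t^{-1}$ on $W_{\mathcal E}$ in the matrix-entry (Lebesgue) coordinates and show it equals $1$. The paper organizes this by block rows, noting that on the $j$th block row the Jacobian is a power of $\prod_{i=1}^{k_j} c_i(t)=\det g_t^{(j)}$; you organize it by block pairs and then telescope via $s_j=\ell_{i_j}-\ell_{i_{j-1}}$. Both computations reduce to the single requirement $\det g_t^{(j)}=1$ for every $j$.

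Your caution about the hypothesis is well placed and matches how the paper actually uses the lemma. The open-cone condition $g_t\in\mathcal C_{\mathcal E}$ by itself does \emph{not} force $\det g_t^{(j)}=1$ (e.g.\ for $n=3$ with blocks $(1,2)$ the Jacobian is $e^{3t_1}$, which need not be $1$ on $\mathcal C_{\{\alpha_1\}}$). What is really being used---implicitly in the lemma, and explicitly later in the proof of \autoref{main2} where the paper writes ``since $\boldsymbol\theta\in\mathcal C_{\mathcal E}$, $\lambda_{\alpha_j}(\boldsymbol\theta)=0$ if and only if $j=k_i$''---is the face condition $\lambda_\alpha(g_t)=0$ for every $\alpha\in\mathcal E$, i.e.\ $\lambda_\alpha(a)=1$ for $\alpha\notin F$, exactly the hypothesis of \autoref{MS}(2). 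Under that condition your telescoped linear combination of fundamental weights vanishes termwise and the paper's powers of $\det g_t^{(j)}$ are all $1$; so the ``precise sense in which that cone enters the proof'' that you anticipated is simply this face condition.
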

\begin{proof}

We can write $ \nu_{W_{\mE}}=\nu_{M(k_{1}, n-k_{1})}\otimes \cdots \otimes \nu_{M(n-k_{\ell+1},  k_{\ell+1})}$ where $M(r,s)=M_{r\times s}(\R)$ is the space of $r\times s$ matrices, hence 
	\[
		g_{t}\nu_{W_{\mE}}=g_{t}^{(1)}\nu_{M(k_{1}, n-k_{1})}\otimes \cdots \otimes g_{t}^{(\ell+1)}\nu_{M(n-k_{\ell+1},  k_{\ell+1})}
	\]	
and $g_{t}^{(j)}\nu_{M(k_{j}, n-k_{1}-\cdots -k_{j-1})}=\mathrm{Jac}(g_{t})\nu_{M(k_{j}, n-k_{1}-\cdots -k_{j-1})}$. But $g_{t}^{(j)}=(c_{1}(t),...,c_{k_{j}}(t))$ acts by dilating the $i^{th}$ row by $c_{i}(t)$, and so $\mathrm{Jac}(g_{t}^{(j)})=\prod_{i=1}^{k_{j}}c_{i}(t)^{n-k_{1}-\cdots -k_{j-1}}=1$. So \[ g_{t}^{(j)}\nu_{M(k_{j}, n-k_{1}-\cdots -k_{j-1})}=\nu_{M(k_{j}, n-k_{1}-\cdots -k_{j-1})}\] and the lemma follows.
\end{proof}
\begin{lem}\label{productCon}
	If $\nu^{(1)}_{t},\nu^{(1)}_{t}$ are probability measures converging to $\nu^{(1)},\nu^{(2)}$ effectively as 
		\[
			|\nu^{(i)}_{t}(f_{i})- \nu^{(i)}(f_{i}) | \ll e^{-\gamma_{i}t},
		\]
	where the implied constant depends only on $\sup|f_{i}|$, $\| f_{i} \|_{C^{k}}$ and $\|f_{i}\|_{Lip}$, then the measure $\nu_{t}^{(1)}\otimes \nu_{t}^{(2)}$ converges to $\nu^{(1)}\otimes \nu^{(2)}$ 
	effectively as 
		\[
			|\nu^{(1)}_{t}\otimes \nu^{(2)}_{t}(F)- \nu^{(1)}\otimes \nu^{(2)}(F) | \ll \max_{i=1,2}\lb  e^{-\gamma_{i}t} \rb,
		\]
	where the implied constant may depend on $\sup|F|$, $\| F \|_{C^{k}}$, $\|F \|_{Lip}$, and the measure of the support of $F$.
\end{lem}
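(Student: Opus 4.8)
The plan is to prove \autoref{productCon} by a telescoping argument: insert the mixed measure $\nu^{(1)}\otimes\nu^{(2)}_{t}$, apply the given one-variable estimate to each factor separately, and use Fubini's theorem to integrate out the variable that is not being moved. Writing the two underlying spaces as $Y_{1}$ and $Y_{2}$, so that $F\in C^{\infty}_{comp}(Y_{1}\times Y_{2})$, I would begin from
\[
\nu^{(1)}_{t}\otimes\nu^{(2)}_{t}(F)-\nu^{(1)}\otimes\nu^{(2)}(F)=\Big(\nu^{(1)}_{t}\otimes\nu^{(2)}_{t}(F)-\nu^{(1)}\otimes\nu^{(2)}_{t}(F)\Big)+\Big(\nu^{(1)}\otimes\nu^{(2)}_{t}(F)-\nu^{(1)}\otimes\nu^{(2)}(F)\Big)
\]
and bound the two bracketed terms in turn.

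For the first bracket, Fubini's theorem (legitimate since $F$ is bounded with compact support and the measures are finite) gives
\[
\nu^{(1)}_{t}\otimes\nu^{(2)}_{t}(F)-\nu^{(1)}\otimes\nu^{(2)}_{t}(F)=\int_{Y_{2}}\Big(\nu^{(1)}_{t}\big(F(\cdot,y_{2})\big)-\nu^{(1)}\big(F(\cdot,y_{2})\big)\Big)\,d\nu^{(2)}_{t}(y_{2}).
\]
For each fixed $y_{2}$ the slice $y_{1}\mapsto F(y_{1},y_{2})$ lies in $C^{\infty}_{comp}(Y_{1})$, and its sup-norm, $C^{k}$-norm, and Lipschitz norm are each bounded by the corresponding quantity for $F$ (the $C^{k}$-norm being a maximum of suprema of partial derivatives, and passing to a slice only discards some of them). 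Hence the hypothesis on $\nu^{(1)}_{t}$ gives $\big|\nu^{(1)}_{t}(F(\cdot,y_{2}))-\nu^{(1)}(F(\cdot,y_{2}))\big|\ll e^{-\gamma_{1}t}$ with implied constant depending only on $\sup|F|$, $\|F\|_{C^{k}}$, $\|F\|_{Lip}$, uniformly in $y_{2}$; moreover this integrand vanishes for $y_{2}$ outside the compact projection of $\supp F$ onto $Y_{2}$. Integrating against the probability measure $\nu^{(2)}_{t}$ therefore contributes $\ll e^{-\gamma_{1}t}$, any dependence on the measure of $\supp F$ being absorbed into the harmless bound $\nu^{(2)}_{t}(\supp F)\le1$. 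The second bracket is handled in exactly the same way with the roles of the two factors exchanged: it equals $\int_{Y_{1}}\big(\nu^{(2)}_{t}(F(y_{1},\cdot))-\nu^{(2)}(F(y_{1},\cdot))\big)\,d\nu^{(1)}(y_{1})$, the inner difference is $\ll e^{-\gamma_{2}t}$ uniformly in $y_{1}$, and integrating against the probability measure $\nu^{(1)}$ contributes $\ll e^{-\gamma_{2}t}$. Adding the two contributions yields $\ll e^{-\gamma_{1}t}+e^{-\gamma_{2}t}\ll\max\{e^{-\gamma_{1}t},e^{-\gamma_{2}t}\}$, as claimed.

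I do not anticipate a real obstacle here; the only care required is bookkeeping. One must confirm that restricting $F$ to a slice does not inflate the relevant norms (immediate, as noted above); that the error function $y_{2}\mapsto\nu^{(1)}_{t}(F(\cdot,y_{2}))-\nu^{(1)}(F(\cdot,y_{2}))$ is measurable, which follows from continuity of $y_{2}\mapsto F(\cdot,y_{2})$ into $C^{\infty}_{comp}(Y_{1})$; and that the implied constants are tracked so that the final constant depends only on $\sup|F|$, $\|F\|_{C^{k}}$, $\|F\|_{Lip}$ (with the measure of $\supp F$ entering only through a factor $\le1$). Finally I would remark that an immediate induction on the number of factors upgrades this to a finite tensor product $\nu^{(1)}_{t}\otimes\cdots\otimes\nu^{(m)}_{t}$, which is the form actually invoked in the proof of \autoref{main2}.
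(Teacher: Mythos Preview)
Your proof is correct and is essentially the same telescoping-plus-Fubini argument as the paper's, the only cosmetic difference being that you insert the intermediate measure $\nu^{(1)}\otimes\nu^{(2)}_{t}$ whereas the paper inserts $\nu^{(1)}_{t}\otimes\nu^{(2)}$. Your added remarks on slice norms, measurability, and the inductive extension to finitely many factors are all accurate and make explicit what the paper leaves implicit.
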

\begin{proof} This argument is a standard application of the triangle inequality. Observe
	\begin{eqnarray*}
		|\nu^{(1)}_{t}\otimes \nu^{(2)}_{t}(F)- \nu^{(1)}\otimes \nu^{(2)}(F) |
		&\leq & |\nu^{(1)}_{t}\otimes \nu^{(2)}_{t}(F)- \nu_{t}^{(1)}\otimes \nu^{(2)}(F)| \\ 
		&&+ |\nu^{(1)}_{t}\otimes \nu^{(2)}(F)- \nu^{(1)}\otimes \nu^{(2)}(F)| \\
		&\leq & \dint_{X_{1}} \left| \nu_{t}^{(2)}(F(x_1 , \cdot)) - \nu^{(2)}(F(x_1 , \cdot))\right| d\nu_{t}^{(1)}(x_1) \\
		&& +  \dint_{X_{2}} \left| \nu_{t}^{(1)}(F(\cdot, x_2)) - \nu^{(1)}(F(\cdot, x_2))\right| d\nu^{(2)}(x_2) \\
		&\ll& e^{-\gamma_{1}t} + e^{-\gamma_{2}t}. 
	\end{eqnarray*}
\end{proof}

Now we can finish off the proof of \autoref{main2}.
\begin{proof}[Proof of \autoref{main2}]
Let $f\in C^{\infty}_{comp}(G/\Gamma)$. Then by \autoref{Winvar}
	\[
		g_{t}\mu=g_{t}^{(1)}\nu_{U_{k_{1}}/\Gamma\cap U_{k_{1}}}\otimes \cdots \otimes g_{t}^{(\ell+1)}\nu_{U_{k_{\ell+1}}/\Gamma\cap U_{k_{\ell+1}}}\otimes \nu_{W_{\mE}/\Gamma}.
	\]
But by \autoref{main1} for each each $j$ there exists a constant $c=c_{n},D_{j}=D_{j}(\boldsymbol{\theta})>0$ such that 
	\[
		\left|  g_{t}^{(j)}\nu_{U_{k_{j}}/\Gamma\cap U_{k_{j}}}(f) - \nu_{\SL_{k_{j}}(\R)/\SL_{k_{j}}(\Z)}(f) \right|\ll  E(f)e^{-c D_{j}t} \ll \tilde{E}(f)e^{-c D_{j}t}
	\]
where $E(f)=\max\{ \| f\|_{\ell},\| f\|_{Lip} \}$ and $\tilde{E}(f)=\max\{m(\mathrm{\supp}(f)) \| f\|_{C^\ell},\| f\|_{Lip} \}$. By inductively applying \autoref{productCon} we obtain
	\[
		\left|  \nu_{W_{\mE}/\Gamma}\otimes \prod_{j} g_{t}^{(j)}\nu_{U_{k_{j}}/\Gamma}(f) -  \nu_{W_{\mE}/\Gamma}\otimes \prod_{j}\nu_{\SL_{k_{j}}(\R)/\SL_{k_{j}}(\Z)}(f) \right|\ll \tilde{E}(f)e^{-\min_{j} cD_{j}t}.
	\]
Therefore there exists a constant $c=c_{n}>0$ such that
	\[
		\left|  g_{t}\mu(f) - \nu_{\mE}(f)    \right| \ll e^{-c(\min_{j}D_{j})t}.
	\]
It remains to show that $\min_{j}D_{j}= \min_{\alpha\in \mE^c}\lambda_{\alpha}(\boldsymbol{\theta})$ (recall we are working with the complement of $F$ in $\Delta$).
 To see this, observe that with our choice of $\Delta$ the fundamental weights are given by
	\begin{equation}\label{fundamentalweight}
		\lambda_{\alpha_{j}}(\boldsymbol{\theta})=\theta_{1}+\cdots+\theta_{j}
	\end{equation}
and since $\boldsymbol{\theta}\in\mC_{\mE}$ 
	\[
		\lambda_{\alpha_{j}}(\boldsymbol{\theta})=0
	\]
if and only if $j=k_{i}$ for $i=1,...,\ell$. Recall the decomposition (\ref{gdecomp}) and notice that if $i_{s}<r<i_{s+1}$, then by (\ref{fundamentalweight})
	\begin{eqnarray*}
		\lambda_{\alpha_{r}}(\log(g_{t}))
		&=& t (\lambda_{\alpha_{i_{s}}}(\boldsymbol{\theta})+ (\theta_{i_{s}+1}+\cdots +\theta_{r}) )\\
		&=& t (\theta_{i_{s}+1}+\cdots +\theta_{r})  \\
		&=& \tilde{\lambda}_{\beta_{r}}(\log(g^{(s+1)}_{t}))
	\end{eqnarray*}
for some fundamental weight $\tilde{\lambda}_{\beta_{r}}$ of $\SL_{k_{s+1}}(\R)$. In particular
	\[
		D_{s}=\displaystyle\min_{i_{s}<r<i_{s+1}} \theta_{i_{s}+1}+\cdots +\theta_{r}
	\]
and, converting back to multiplicative notation (as at the end of the proof of \autoref{main1}) we have the desired result.
\end{proof}
\subsection{Proof of Theorem \ref{main3}}\label{proof_of_main3}
In the following proof we use the fact that for each $a\in \mC_{j}$, the horospherical subgroup corresponding to $a$ is 
		\[
			H_{j}= \lb 
					\left(\begin{array}{cc} 
						I & A \\ 0 & I
					\end{array} \right) ~:~ A\in M_{j\times (n-j)} 
				\rb.
		\]	
\begin{proof}
 Without loss of generality we may assume that $\vp$ has mean zero. By \cite[Lemma 2.2]{KM} there is a function $\xi\in C^{\infty}_{comp}(H_{j})$ that can be chosen so that $\|\xi\|_{\ell}\ll r^{-(k-\ell)}$ , $\xi\geq 0$, $\int_{H_{j}}\xi=1$, and the support contained in $B_{H_{j}}(r)$ where $r=e^{-\beta t}$ with $\beta$ is to be chosen later. 
Arguing as in the beginning of the proof of \autoref{main1} we may suppose that $a\in\mC$. Write $a=g_t=\exp(t\boldsymbol{\theta})=a_{t}b_{t}$ where $t>0$, $\boldsymbol{\theta}\in\mC_{j}$ is on the unit sphere of $\mathfrak{a}$, $t^{-1}\log a_{t}$ is a multiple of 
\[
	(1/j,...,1/j, -1/(n-j),...,-1/(n-j)),
\]
and $b_{t}\in\mathcal{C}_{j}$. Note that the action of $b_{t}$ on $H_{j}$ (the horospherical subgroup of $a_{t}$) is non-contracting.  
Clearly
$$
\int_{U}f(u) \vp(g_t uz) d\nu_U(u)
=\int_{H_{j}}\int_{U} \xi(h)f(u)\vp(a_tb_t uz) d\nu_U(u) d\nu_{H_{j}}(h)
$$
for each $z\in L$. Notice that the horospherical subgroup $H_{j}$ is contained in $U$. Using the change of variables $u\mapsto b_{-t}hb_tu$ and the left invariance of the measure $\nu_U$ we get that
$$
\int_{U}f(u) \vp(g_t uz) d\nu_U(u)
=
\int_{U}\int_{H_{j}} f(b_{-t}hb_tu)\xi(h)\vp(a_thb_tuz) d\nu_{H_{j}}(h)d\nu_U(u).
$$
Now we are in a position to estimate the above integral. To accomplish this break the integral into 2 pieces. We have 
\[
	\mathrm{dist}(e, b_{-t}hb_t)\leq e^{-\rho t}\mathrm{dist}(e,h)
\]
for any $h\in {H_{j}}$ where $\rho$ depends only on $g_{t}$. Notice the supports of the functions 
\[
		u\mapsto f_u(h)= f(b_{-t}hb_tu)
\]
are contained in $B=\supp(f)B_{U}(e^{-(\rho+\beta)t})$. Suppose $t>T_{1}>0$ is taken large enough so that $r=e^{-\beta t}<r_{0}/2$ and $\mu(B)\leq 2\mu(\supp(f))$, where $T_{1}$ is from \autoref{nondiv2} and $r_{0}$ is the injectivity radius of $L$. Let $\epsilon=(2/c)^{1/n}e^{-\beta t/n}$ and define
	\[
		\Omega= \lb u\in U ~:~ b_{t}uz\not\in K_{\epsilon}  \rb\;\;\; \text{ and } \;\;\; \Phi= B- \Omega.
	\]
Then we write
	\[
		\int_{U}\int_{H_{j}} f(b_{-t}hb_tu)\xi(h)\vp(a_thb_tuz) d\nu_{H_{j}}(h)d\nu_U(u)= I +II
	\]
where
	\[
		I=\int_{\Omega}\int_{H_{j}} f(b_{-t}hb_tu)\xi(h)\vp(a_thb_tuz) d\nu_H(h)d\nu_U(u) 
	\]
and
	\[
		II=\int_{\Phi}\int_{H_{j}} f(b_{-t}hb_tu)\xi(h)\vp(a_thb_tuz) d\nu_H(h)d\nu_U(u).
	\]
For $I$ we have by \autoref{nondiv2} and the assumptions on $t$ 
	\[
		\begin{split}
		|I|\leq \mu(\Omega) \sup|f|\sup|\vp|\int_{H_{j}}\xi(h)d\nu_{H_{j}}(h)
		&  \ll \epsilon^{\kappa_{2}} 2\mu(\supp(f)) \sup|f|\sup|\vp| \\
		& \ll_{\vp,f,n}e^{-\beta \kappa_{2}t/n}.
		\end{split}
	\]
For $II$ we have by \autoref{equi} to obtain
	\[
		|II|\leq \mu(\Phi) \left(  r\cdot\no[\vp]_{\rm Lip}\cdot\int_{H_{j}}|f_{u}|+ r^{-k}\cdot\no[f_{u}]_\ell \cdot \no[\vp]_\ell \cdot e^{-\gamma \mathrm{dist}(a_{t},e)}   \right).
	\] 
 As a compactly supported smooth function on $H$, the Sobelev norm of $f_u(h)$ is controlled by the norms of $f$ and $\xi$ because the conjugation by $b_{-t}$ on $H_{j}$ is non-expanding and hence the derivatives coming from $f$ do not increase. In particular (by \cite[Lemma 2.2]{KM})
 	\[
		\| f_{u} \|_{\ell} \ll_{f} r^{-(\ell +j(n-j)/2)}
	\] so we find that for some $\eta=\eta(\boldsymbol{\theta})>0$ we have 
 	\[
		|II|\ll_{f,\vp} e^{-\beta t} + e^{-(\gamma \eta - (k+\ell +j(n-j)/2)\beta) t}.
	\]
	Now we find that 
	\begin{eqnarray*}
		|I|+|II| &\ll_{f,\vp} & 
		e^{-\beta \kappa_{2}t/n}  + e^{-\beta t} + e^{-(\gamma \eta - (k+\ell +j(n-j)/2)\beta) t} \\
		 &\ll_{f,\vp} & e^{-\beta \min\lb \kappa_{2}/n , 1 \rb t}+e^{-(\gamma \eta - (k+\ell +j(n-j)/2)\beta) t}. 
	\end{eqnarray*}
	We now choose 
	\[
		\beta=\dfrac{\gamma\eta n}{ \min\lb \kappa_{2}/n , 1 \rb+k+\ell +j(n-j)/2}.
	\]
	To finish the proof we argue as in the end of the proof of \autoref{main1} to obtain the desired result.
	\end{proof}

\section{Counting Lifts of Horospeheres}\label{horospheres}

Let $d(\cdot,\cdot)$ be a right $G$-invariant and $K-$bi-invariant Riemannian metric on $X$ that is induced by the Killing form and satisfies
	\[
		d(x_{0},x_{0}a)=\| \log(a)\|
	\]
for each $a\in A$, where $\|\cdot\|$ is the Euclidean norm on $\mathfrak{a}$.  Let $G=KAU$ be the usual Iwasawa decomposition of $G$ and  let  $B(x,R)$ be the ball of radius $R$ centered at $x\in X$ with respect to $d$.

\subsection{Proof of \autoref{horo1thm}}
The proof of \autoref{horo1thm} is based on a counting argument that utilizes our main equidistribution theorem. To indicate the basic approach we must introduce a few technical facts. Firstly, it was proven in  \cite{MS} that $\overline{B}_{R}=\lb gU ~:~ d(x_{0},x_{0}g)\leq R \rb$ admits the decomposition
	\begin{equation}\label{modUdecomp}
		\overline{B}_{R}= KA_{R}U/U
	\end{equation}
where $A_{R}=A\cap \tilde{B}(R)$ and $\tilde{B}_{R}=\{  g\in G : d(x_{0},x_{0}g)\leq R \}$.

Let $\mathcal{U}=K\backslash Kg_{0}U$ and $\overline{\mathcal{U}}=\pi(\mathcal{U})$ where $\pi: X\rightarrow X/\Gamma$ is the natural projection. We observe that there exists an $a_{0}\in A$ such that  $Kg_{0}U=Ka_{0}U$. This is guaranteed by the Iwasawa decomposition. In \autoref{horo1thm} we are interested in the asymptotic behavior of the function
	\[
		N(R)=\#\lb \gamma\in\Gamma ~:~ \mathcal{U}\gamma\cap B(x_{0},R)\neq \emptyset  \rb.
	\]
The function 
	\[
		F_{R}(g\Gamma)=\dsum_{\gamma\in \Gamma/\Gamma\cap U}1_{R}(g\gamma a_{0} U),
	\]
where $1_{R}(x)$ is the characteristic function of $\overline{B}_{R}$, satisfies $F_{R}(\Gamma)=N(R)$.  Let 
	\[
		f(R)=\int_{A_{R}\cap \mC}\rho^{\prime}_{\Delta}(a)da,
	\]
where $\rho^{\prime}_{\Delta}(a)=\exp(\al \rho_{\Delta},\log(a) \ar)$ and $\rho_{\Delta}$ is the sum of the positive roots. We will also need the following proposition showing the existence of certain approximate identities, also known as mollifiers.

\begin{prop}\label{approxidentity}
 For each $\epsilon>0$ with $\epsilon\ll 1$ there exists a function $\Psi_{\epsilon}\in C^{\infty}_{comp}(G/\Gamma)$ such that (1) $\Psi_{\epsilon}$ is supported in a ball of radius $\epsilon$ centered at $\Gamma$, (2) $\Psi_{\epsilon} $ is non-negative with integral equal to 1, (3) $\sup \Psi_{\epsilon}\ll \epsilon^{-dim(G)}$, (4) $\|\Psi_{\epsilon}\|_{Lip}\ll \epsilon^{1-\dim(G)}$, and (5) for each $\ell\geq 1$ we have $\|\Psi_{\epsilon}\|_{\ell}\ll \epsilon^{-(\ell+\dim(G))}$, and  $\| \Psi_{\e}\|_{C^{\ell}}\ll \epsilon^{-(\ell+\dim(G))}$.
\end{prop}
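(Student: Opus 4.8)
The plan is to carry out the standard mollifier construction, localized at the identity coset $z_0 = e\Gamma$, reducing every estimate in (3)--(5) to a single rescaling computation in a fixed coordinate chart on $G$.

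\textbf{Localization.} First I would note that $z_0$ corresponds to the lattice $\Z^n$, whose shortest nonzero vector has norm $1$, so $z_0\in K_1$; by \cite[Proposition 3.5]{KM} the injectivity radius of $K_1$ exceeds some absolute $c_0>0$. Hence for every $\epsilon<c_0$ the covering map $\pi\colon G\to G/\Gamma$ (a local isometry for the fixed right-invariant metric `$\mathrm{dist}$' of \S\ref{recall-km}) restricts to an isometry of $B_G(\epsilon)$ onto $B_{G/\Gamma}(z_0,\epsilon)$. So it suffices to build $\widetilde\Psi_\epsilon\in C^\infty_{comp}(G)$ supported in $B_G(\epsilon)$, with $\widetilde\Psi_\epsilon\ge 0$, $\int_G\widetilde\Psi_\epsilon\,dg = 1$ (Haar measure), and the stated bounds for the sup-, Lipschitz- and Sobolev-norms; then $\Psi_\epsilon := \widetilde\Psi_\epsilon\circ(\pi|_{B_G(\epsilon)})^{-1}$, extended by zero, is a well-defined element of $C^\infty_{comp}(G/\Gamma)$ that inherits (1)--(5) verbatim. (This is the $G/\Gamma$-analogue of \cite[Lemma 2.2]{KM}.)

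\textbf{Construction.} Next I would fix a basis $X_1,\dots,X_N$ of $\lieg$, $N := \dim G$, and a nonnegative $\eta\in C^\infty_{comp}(\R^N)$ with $\int\eta = 1$ supported in a Euclidean ball small enough that the chart $\chi(x) := \exp(x_1X_1+\cdots+x_NX_N)$ carries the $\epsilon$-dilate of $\supp\eta$ into $B_G(\epsilon)$ for all small $\epsilon$; this is possible since $\chi$ is a diffeomorphism near $0$, bi-Lipschitz with constants $\asymp 1$ for `$\mathrm{dist}$', and $\chi^*(dg) = J(x)\,dx$ for some smooth positive $J$ with $J\asymp 1$ near $0$. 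Set $\widetilde\Psi_\epsilon(\chi(x)) := c_\epsilon\,\epsilon^{-N}\eta(x/\epsilon)$ (and $\equiv 0$ off the chart image), with $c_\epsilon>0$ chosen so the Haar integral is $1$. Since $\int_G\widetilde\Psi_\epsilon\,dg = c_\epsilon\int\eta(y)J(\epsilon y)\,dy = c_\epsilon\bigl(1+O(\epsilon)\bigr)$, we get $c_\epsilon\asymp 1$, harmless in every estimate; (1) and (2) follow by construction.

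\textbf{Norms, and the main point.} Finally, because $\chi$ and $\chi^{-1}$ have all derivatives bounded on the fixed neighbourhood, the left-invariant operators on $G$ defining $\|\cdot\|_\ell$ and $\|\cdot\|_{C^\ell}$ pull back through $\chi$ to order-$\le\ell$ operators with smooth, $\epsilon$-independent bounded coefficients; it is therefore enough to estimate the norms of $x\mapsto\epsilon^{-N}\eta(x/\epsilon)$. An $m$-fold derivative of this equals $\epsilon^{-N-m}(\partial^m\eta)(x/\epsilon)$, so each derivative costs a factor $\epsilon^{-1}$ beyond the baseline sup-size $\ll\epsilon^{-N}$, while its $L^2$-norm gains an extra $\epsilon^{N/2}$ under $y = x/\epsilon$ and so is $\ll\epsilon^{-N/2-m}\le\epsilon^{-N-m}$ for $\epsilon<1$. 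Reading these off at $m = 0$, $m = 1$, and $0\le m\le\ell$ gives (3), (4), and (5) respectively. I expect the only genuine work to be the bookkeeping compressed into ``$\asymp 1$'' and ``$\epsilon$-independent'', namely checking that the bi-Lipschitz constant of $\chi$, the bounds on $J^{\pm 1}$ and on the transported operators' coefficients, and $c_\epsilon$ are all controlled once the chart and a threshold $\epsilon_0$ on $\epsilon$ are fixed. This is routine, so I do not anticipate a real obstacle — the proposition is a soft, quantitative mollifier statement.
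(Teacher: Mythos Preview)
Your proof is essentially the paper's own argument, carried out with more care: the paper simply declares ``it suffices to prove the result in $\R^N$'' and writes down $\Psi_\epsilon(x)=\epsilon^{-N}\Psi(x/\epsilon)$, while you spell out the injectivity-radius reduction, the exponential chart, and the normalization constant $c_\epsilon$. One caveat worth flagging: your $m=1$ sup-bound gives $\|\Psi_\epsilon\|_{\mathrm{Lip}}\ll\epsilon^{-N-1}$, not the $\epsilon^{1-N}$ stated in (4); the paper's own rescaling computation contains the same sign slip (it writes $\epsilon^{-1}\|x-y\|$ where $\epsilon\|x-y\|$ is meant), so this is a typo in the proposition rather than a defect in your method.
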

\begin{proof}
 It suffices to prove the result in $\R^{N}$ for $\epsilon\ll 1$ where $N=\dim(G)$. Let $\Psi\in C^{\infty}_{comp}(\R^N)$ be non-negative with integral equal to 1 that is supported in the ball of radius 1 centered at the origin. Define
 	\[
		\Psi_{\epsilon}(\bx)=\epsilon^{-N}\Psi(\epsilon^{-1}\bx).
	\]
Then by construction $\Psi_{\epsilon}$ satisfies (1)-(3). Item (4) also follows from the definition: 
	\[
		\| \Psi_{\epsilon} \|_{Lip} =\sup_{\| \bx\|,\|\by\| <\epsilon} \dfrac{| \Psi_{\epsilon}(\bx) - \Psi_{\epsilon}(\by)|}{\| \bx -\by \|}
						=\sup_{\| \bx\|,\|\by\| <1} \dfrac{\epsilon^{-N}| \Psi(\bx) - \Psi(\by)|}{\epsilon^{-1}\| \bx -\by \|}=\epsilon^{1-N}\| \Psi\|_{Lip}.
	\]
Similarly item (5) follows from the observation that if $D=d/dx_{i_{1}}\cdots d/dx_{i_{\ell}}$, then $\| D \Psi_{\epsilon} \|_{L^p}= \epsilon^{-\ell}\| D \Psi \|_{L^p}$ when $0<p<\infty$, which follows by a change of variables, and $\| D \Psi_{\epsilon} \|_{L^{\infty}}= \epsilon^{-(N+\ell)}\| D \Psi \|_{L^\infty}$. Here $\| \cdot\|_{L^p}$ is the usual $L^p(\R^N)$ norm.
\end{proof}

Our proof of \autoref{horo1thm} is based on the following two lemmas. The first lemma, \autoref{weakConvergenceLemma1}, is a direct consequence of \autoref{main1}, and the second lemma, \autoref{approxConvergenceLemma1}, is essentially a corollary of \autoref{weakConvergenceLemma1}. We will prove the lemmas after we prove \autoref{horo1thm}. 
\begin{lem}\label{weakConvergenceLemma1}
There exists $\delta>0$ such that for every $\Psi\in C^{\infty}_{comp}(G/\Gamma)$
	\begin{equation}\label{weakConvergence1rate}
		\left| \al F_{R}, \Psi \ar  - \kappa f(R)\al 1,\Psi \ar   \right|
		\ll E(\Psi) e^{(\| \rho_{\Delta}\|-\delta)R},
	\end{equation}
where 
	\[
		\kappa=c_{n}\mathrm{vol}(K)\dfrac{\mathrm{vol}(\ol{\mU})}{\mathrm{vol}(\mM)}
	\]
for some constant $c_{n}>0$ that only depends on $n$ and $E(\Psi)=\max\{\|\Psi\|_{Lip},\| \Psi \|_{\ell}, \sup|\Psi|\}$.
\end{lem}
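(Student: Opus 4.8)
The plan is to unfold the pairing $\langle F_R,\Psi\rangle$ against Haar measure on $G/\Gamma$, recognize it as an integral of $\Psi$ over translates of the periodic horospherical orbit $Ka_0U/(\Gamma\cap U)$, and then apply \autoref{main1} uniformly over the relevant diagonal translates. More precisely, writing out the definition of $F_R$ and using that $\Psi$ has compact support,
\[
	\langle F_R,\Psi\rangle = \int_{G/\Gamma} \sum_{\gamma\in\Gamma/\Gamma\cap U} 1_R(g\gamma a_0 U)\,\Psi(g\Gamma)\,dg
	= \int_{\Gamma\cap U\backslash G} 1_R(g a_0 U)\,\Psi(g\Gamma)\,dg,
\]
the standard unfolding step. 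Next I would invoke the Iwasawa decomposition $G = KAU$ together with the decomposition \eqref{modUdecomp} of $\overline{B}_R = KA_R U/U$: the condition $1_R(ga_0U)=1$ cuts out, in Iwasawa coordinates $g = k a u$, precisely the region where the $A$-component lies (after absorbing $a_0$) in a translate of $A_R\cap\mC$. After changing variables, the Haar measure on $G$ in Iwasawa coordinates contributes the modular factor $\rho'_\Delta(a) = \exp\langle\rho_\Delta,\log a\rangle$, which is exactly the weight appearing in the definition of $f(R)$. This reduces the pairing to an integral of the form
\[
	\int_K \int_{A_R\cap\mC} \rho'_\Delta(a)\left(\int_{U/\Gamma\cap U}\Psi(k a u\,\Gamma)\,d\nu_U(u)\right)da\,dk,
\]
i.e. an integral over the diagonal parameter $a$ of the quantity $\int_{U.z_0}\Psi(a.z)\,d\mu(z)$ (up to the compact $K$-average and normalization of $\nu_U$ versus $\mu$).

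Now I would apply \autoref{main1}: for each $a$ in the interior of $\mC$ appearing in this integral,
\[
	\left|\int_{U.z_0}\Psi(a.z)\,d\mu(z) - \int_{G/\Gamma}\Psi\,dm\right| \ll E(\Psi)\Big(\min_{\alpha\in\Delta}\lambda_\alpha(a)\Big)^{-\delta_0}
\]
for the constant $\delta_0=\delta_0(n)>0$ from that theorem. Substituting the main term $\int\Psi\,dm$ back produces $\kappa f(R)\langle 1,\Psi\rangle$ with $\kappa$ collecting $\mathrm{vol}(K)$, the normalization constant $c_n$ relating $\nu_U$ to $\mu$ on the periodic orbit, and the ratio $\mathrm{vol}(\overline{\mU})/\mathrm{vol}(\mM)$ coming from how the total mass of $\mu$ on $Ka_0U/(\Gamma\cap U)$ compares to the ambient volume. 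The error term is
\[
	\ll E(\Psi)\int_{A_R\cap\mC}\rho'_\Delta(a)\Big(\min_{\alpha\in\Delta}\lambda_\alpha(a)\Big)^{-\delta_0}da,
\]
and the task is to show this is $\ll E(\Psi)e^{(\|\rho_\Delta\|-\delta)R}$ for some $\delta>0$.

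The main obstacle is this last estimate. The naive bound $\int_{A_R\cap\mC}\rho'_\Delta(a)\,da \asymp e^{\|\rho_\Delta\|R}$ (the volume of the ball, essentially $f(R)$) is saturated only by $a$ deep in the Weyl chamber, where $\min_\alpha\lambda_\alpha(a)$ is comparable to $R$ and the factor $(\min_\alpha\lambda_\alpha(a))^{-\delta_0}$ is genuinely small — so those $a$ contribute $\ll e^{\|\rho_\Delta\|R}R^{-\delta_0}$, acceptable. The dangerous region is near the walls of $\mC$, where $\min_\alpha\lambda_\alpha(a)$ is bounded and \autoref{main1} gives no decay; one must check that the $\rho'_\Delta$-measure of an $\eta$-neighborhood of $\partial(A_R\cap\mC)$ is at most $e^{(\|\rho_\Delta\|-\delta(\eta))R}$, i.e. strictly smaller exponential order, and then optimize the thickness $\eta$ as a function of $R$ (taking $\eta$ a small multiple of $R$) to balance the two contributions. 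This geometric-of-numbers estimate on the cone — that the bulk of $\rho'_\Delta$-mass sits at bounded angular distance from the central direction — is standard for such Weyl-chamber integrals but is where the real work lies; I would carry it out by slicing $A_R\cap\mC$ by the value of $D(\boldsymbol\theta)=\mathrm{dist}(\boldsymbol\theta,\partial\mC)$ and integrating the resulting one-variable estimate. Finally I would note $E(\Psi)$ already absorbs $\|\Psi\|_{\mathrm{Lip}}$, $\|\Psi\|_\ell$ and $\sup|\Psi|$, so the dependence on $\Psi$ in \autoref{main1} causes no trouble, and the $K$-average is harmless since $K$ is compact and acts by isometries.
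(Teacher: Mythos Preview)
Your outline is essentially the paper's proof: unfold $\langle F_R,\Psi\rangle$ into a $K\times A_R$ integral of translates $a\mu$, apply \autoref{main1} pointwise in $a$, and then split $A_R\cap\mC$ into a region where \autoref{main1} gives decay and a complementary region where the weight $\rho'_\Delta$ is itself exponentially smaller. The paper makes the split by angular distance to the ray $\R_{>0}\rho_\Delta$ (a fixed small cone $Y\subset\mathcal{A}$ around $\rho_\Delta$), whereas you split by angular distance to $\partial\mC$; since $\rho_\Delta$ lies in the interior of $\mathcal{A}\subset\mC$, the two decompositions are interchangeable.

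There is one slip you should fix. For $a=\exp(r\boldsymbol{\theta})$ with $\|\boldsymbol{\theta}\|=1$ and $r\le R$, the multiplicative fundamental weights satisfy $\min_{\alpha}\lambda_\alpha(a)=e^{rD(\boldsymbol{\theta})}$, so deep in the chamber this quantity is of order $e^{cR}$, not of order $R$. Consequently the contribution of the ``good'' region is $\ll E(\Psi)\,e^{(\|\rho_\Delta\|-c\delta_0)R}$, an exponential saving, not the $e^{\|\rho_\Delta\|R}R^{-\delta_0}$ you wrote; the latter would \emph{not} be acceptable, since the lemma demands an exponential gain $e^{-\delta R}$. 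Once you correct this, there is no need to let the thickness $\eta$ depend on $R$: any fixed $\eta>0$ small enough that the $\eta$-neighborhood of $\partial\mC$ misses $\rho_\Delta/\|\rho_\Delta\|$ gives, on the near-boundary piece, $\langle\rho_\Delta,\boldsymbol{\theta}\rangle\le(1-\delta_1)\|\rho_\Delta\|$ for some $\delta_1>0$, hence a trivial bound $\ll\sup|\Psi|\,R^{n-1}e^{(1-\delta_1)\|\rho_\Delta\|R}$. Taking $\delta=\min\{c\delta_0,\delta_1\|\rho_\Delta\|\}$ finishes the proof, exactly as in the paper.
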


\begin{lem}\label{approxConvergenceLemma1}
 Suppose $\Psi_{\e}\in C^{\infty}_{c}(G/\Gamma)$ is the approximate identity supported in $\tilde{B}_{\e}$ given in \autoref{approxidentity}. If $\e=e^{-cR}$ for some $c>0$, then
 	\begin{equation}\label{approxConvergence1rate}
		\left|    F_{R}(\Gamma) - \al F_{R}, \Psi_{\e} \ar   \right|
		\ll R^{n-2}e^{(\| \rho_{\Delta}\| -c)R}+ E(\Psi_{\e})e^{(\|\rho_{\Delta} \|-\delta)R}.
	\end{equation} 
where $E(\Psi_{\e})=\max\{\|\Psi_{\e}\|_{Lip},\| \Psi_{\e} \|_{\ell}, \sup|\Psi_{\e}|\}$.
\end{lem}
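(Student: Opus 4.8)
The plan is to run the Duke--Rudnick--Sarnak smoothing argument, bootstrapped through \autoref{weakConvergenceLemma1}. Since $\langle 1,\Psi_\e\rangle=1$ we may write
\[
\langle F_R,\Psi_\e\rangle-F_R(\Gamma)=\int_{G/\Gamma}\bigl(F_R(g\Gamma)-F_R(\Gamma)\bigr)\Psi_\e(g\Gamma)\,dg,
\]
so the whole problem reduces to controlling the oscillation of $F_R$ over $\supp\Psi_\e$. The first step I would carry out is a \emph{sandwiching estimate}: there is a constant $c_0=c_0(n)\geq1$ such that
\[
F_{R-c_0\e}(\Gamma)\ \leq\ F_R(g\Gamma)\ \leq\ F_{R+c_0\e}(\Gamma)\qquad\text{for every }g\Gamma\in\supp\Psi_\e .
\]
To see this, pick a representative $g$ with $d(x_0,x_0g)\leq\e$ (possible since $\supp\Psi_\e\subset\tilde B_\e$); then the triangle inequality together with right $G$-invariance of $d$ gives, for every $\gamma\in\Gamma/\Gamma\cap U$,
\[
\bigl|d(x_0,x_0g\gamma a_0)-d(x_0,x_0\gamma a_0)\bigr|\ \leq\ d(x_0g\gamma a_0,x_0\gamma a_0)\ =\ d(x_0g,x_0)\ \leq\ c_0\e ,
\]
the constant $c_0$ absorbing the Lipschitz comparison between the metrics on $G$ and on $X$. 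Hence $1_{R-c_0\e}(\gamma a_0U)\leq 1_R(g\gamma a_0U)\leq 1_{R+c_0\e}(\gamma a_0U)$ for each $\gamma$, and summing over $\gamma\in\Gamma/\Gamma\cap U$ yields the claim.

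Because $R\mapsto F_R(\Gamma)=N(R)$ is nondecreasing, both $F_R(\Gamma)$ and $\langle F_R,\Psi_\e\rangle$ lie in the interval $[N(R-c_0\e),N(R+c_0\e)]$, so
\[
\bigl|\langle F_R,\Psi_\e\rangle-F_R(\Gamma)\bigr|\ \leq\ N(R+c_0\e)-N(R-c_0\e).
\]
To bound the right-hand side I would apply the sandwich once more, in the reverse direction: for $g\Gamma\in\supp\Psi_\e$ one has $F_{R+2c_0\e}(g\Gamma)\geq F_{R+c_0\e}(\Gamma)$ and $F_{R-2c_0\e}(g\Gamma)\leq F_{R-c_0\e}(\Gamma)$, so integrating against $\Psi_\e$ (and using $\langle1,\Psi_\e\rangle=1$) gives
\[
N(R+c_0\e)-N(R-c_0\e)\ \leq\ \langle F_{R+2c_0\e},\Psi_\e\rangle-\langle F_{R-2c_0\e},\Psi_\e\rangle .
\]
Now I invoke \autoref{weakConvergenceLemma1} with the test function $\Psi_\e$ at the two radii $R\pm 2c_0\e$; since $\langle1,\Psi_\e\rangle=1$ and $e^{(\|\rho_\Delta\|-\delta)(R\pm 2c_0\e)}\ll e^{(\|\rho_\Delta\|-\delta)R}$ (as $\e\to0$), this produces
\[
N(R+c_0\e)-N(R-c_0\e)\ \ll\ \kappa\bigl(f(R+2c_0\e)-f(R-2c_0\e)\bigr)+E(\Psi_\e)\,e^{(\|\rho_\Delta\|-\delta)R}.
\]

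It remains to estimate the increment of $f$. Using $d(x_0,x_0a)=\|\log a\|$, the set $A_R$ is the image under $\exp$ of the Euclidean ball of radius $R$ in $\liea\cong\R^{n-1}$, so $f(R+2c_0\e)-f(R-2c_0\e)$ is the integral of $\rho^{\prime}_{\Delta}(a)=e^{\langle\rho_{\Delta},\log a\rangle}$ over the image of the spherical shell $\{\,R-2c_0\e\leq\|X\|\leq R+2c_0\e\,\}$ intersected with $\mC$. On that shell $\rho^{\prime}_{\Delta}(a)\leq e^{\|\rho_\Delta\|(R+2c_0\e)}\ll e^{\|\rho_\Delta\|R}$, while the shell has Lebesgue volume $\ll\e R^{n-2}$ (an annulus of inner radius $\asymp R$ and thickness $\asymp\e$ in $\R^{n-1}$); hence $f(R+2c_0\e)-f(R-2c_0\e)\ll\e R^{n-2}e^{\|\rho_\Delta\|R}$. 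Substituting $\e=e^{-cR}$ turns this into $\ll R^{n-2}e^{(\|\rho_\Delta\|-c)R}$, and combining the displays above gives precisely \eqref{approxConvergence1rate}.

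The step I expect to demand the most care is the sandwiching, and in particular the fact that we are forced through a \emph{two-sided} comparison: we have no handle on $N(R)$ except via the integrated statement \autoref{weakConvergenceLemma1}, so bounding the oscillation of $F_R$ over $\supp\Psi_\e$ inevitably leaks the increment of the main term $f$, and the final exponent $R^{n-2}$ is exactly the polynomial growth rate of $f$. Verifying the shell-volume estimate with the correct power of $R$, and checking that the constant $c_0$ is uniform in $R$ (so that the radius shifts $\pm c_0\e$ really are $O(\e)$), are the only genuinely computational ingredients.
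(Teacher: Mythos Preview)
Your argument is correct and follows the same Duke--Rudnick--Sarnak sandwiching strategy as the paper. The paper sandwiches in the other direction, writing $F_{R-\e}(g\Gamma)\le F_R(\Gamma)\le F_{R+\e}(g\Gamma)$ and concluding directly that both $F_R(\Gamma)$ and $\langle F_R,\Psi_\e\rangle$ lie in $[\langle F_{R-\e},\Psi_\e\rangle,\langle F_{R+\e},\Psi_\e\rangle]$, which saves your second application of the sandwich (and the factor $2c_0$). For the shell integral the paper goes through a more elaborate integration-by-parts computation, whereas your crude bound (maximum of $\rho'_\Delta$ times Lebesgue volume of the annulus in $\mathfrak{a}\cong\R^{n-1}$) is simpler and already gives the required $\e R^{n-2}e^{\|\rho_\Delta\|R}$; both routes land on the same estimate.
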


\begin{proof}[Proof of \autoref{horo1thm}]
Assume \autoref{weakConvergenceLemma1} and \autoref{approxConvergenceLemma1}. Take $\Psi_{\epsilon}$ to be the approximate identity given by \autoref{approxidentity} supported in the ball of radius $\epsilon=e^{-cR}$ for some $c>0$ to be chosen later. Then it is immediate that
	\[
		\max\{\|\Psi_{\epsilon}\|_{Lip},\| \Psi_{\epsilon} \|_{\ell}, \sup|\Psi_{\epsilon}|\} \ll \epsilon^{-(N+\ell)}= e^{c(N+\ell)R}.
	\]
Then for some $u,\delta>0$
	\begin{eqnarray*}
		|N(R)-\mathrm{main}~\mathrm{term}|
		&=& |F_{R}(\Gamma)-\mathrm{main}~\mathrm{term}| \\ 
		&\leq&  |F_{R}(\Gamma)-\al F_{R},\Psi_{R} \ar |+|\al F_{R},\Psi_{R} \ar-\mathrm{main}~\mathrm{term}| \\
		&\ll& R^{u}e^{(\| \rho_{\Delta}\| - c)R} +   e^{c(N+\ell)R}e^{(\| \rho_{\Delta} \|-\delta)R}.
	\end{eqnarray*}
We now select $0<c<\delta(N+\ell+1)^{-1}$ to obtain 
	\[
		|N(R)-\mathrm{main}~\mathrm{term}| \ll e^{(\| \rho_{\Delta}\| - c)R}.
	\]
To finish the proof we observe that for each $s>0$ we have  $e^{(\| \rho_{\Delta}\|-s)R}\ll_{s} \mathrm{vol}(B(x_{0},R))$. In particular there is a number $0<q<1$ such that
	\[
		e^{(\| \rho_{\Delta}\|-c)R} \ll_{c} \left( \mathrm{vol}(B(x_{0},R)) \right)^{1-q} 
	\]
This can be seen by choosing $q$ such that $(\| \rho_{\Delta}\|-c)<\|\rho_{\Delta}\|(1-q)$. Plainly we may take any $0<q<c\|\rho_{\Delta}\|^{-1}$ and conclude that 
		\[
		|N(R)-\mathrm{main}~\mathrm{term}| \ll\left( \mathrm{vol}(B(x_{0},R)) \right)^{1-q} .
	\]
\end{proof}
\begin{proof}[Proof of \autoref{weakConvergenceLemma1}]
	Note that $\rho_{\Delta}$ is twice the sum of the positive root and so it is in the positive Weyl chamber. From the proof of Lemma 30 from \cite{MS} we find that 
		\[
			\al F_{R},\Psi \ar= \nu(\pi(U))\int_{K}\int_{A_{R}}\int_{G/\Gamma} \overline{\Psi(g^{\prime}\Gamma)}d(ka\mu_{U})(g^{\prime})\rho^{\prime}_{\Delta}(a)dadk
		\]
	where $A_{R}=A\cap \tilde{B}_{R}$, and $\rho_{\Delta}^{\prime}$ is given in \cite{MS}. For simplicity let us assume that $A_{R}$ is centered at the identity. Choose $\delta_{1}>0$ such that 
		\[
			Y=\left\lbrace  w \in\mathfrak{a} : \al w,\rho_{\Delta} \ar>(1-\delta_{1})\|\rho_{\Delta}\|\|w\|  \right\rbrace
		\]
	is contained in $\mathcal{A}$ and write 
		\[
			A_{R}=\Omega_{1}\cup \Omega_{2}
		\]
	where $\Omega_{1}=A_{R}\cap Y$ and $\Omega_{2}$ is the complement of $\Omega_{1}$ in $A_{R}$. We will decompose the Haar measure $da$ on $A$ in ``polar coordinates'' as 
		\begin{equation}\label{sphericalDecomp}
			da=d\exp(r\boldsymbol{\theta})=r^{n-2} dr d\sigma(\boldsymbol{\theta})
		\end{equation}
where $\boldsymbol{\theta}$ is an element of the unit sphere of $\mathfrak{a}$, $r>0$, $dr$ is a multiple of the Lebesgue measure on $\R$, and  $d\sigma$ is the measure on the unit sphere $S^{n-2}$ inherited from the Lebesgue measure. 
Observe that for each $k\in K$
		\begin{eqnarray*}
			 && \left| \int_{\Omega_{1}}\int_{G/\Gamma} \overline{\Psi(g^{\prime}\Gamma)}d(ka\mu_{U})(g^{\prime})\rho^{\prime}_{\Delta}(a)da - \int_{\Omega_{1}}\int_{G/\Gamma} \overline{\Psi(g^{\prime}\Gamma)}dm(g^{\prime})\rho^{\prime}_{\Delta}(a)da \right| \\
			&\leq &  \int_{\Omega_{1}} \left|\int_{G/\Gamma} \overline{\Psi(g^{\prime}\Gamma)}d(ka\mu_{U})(g^{\prime}) - \int_{G/\Gamma} \overline{\Psi(g^{\prime}\Gamma)}dm(g^{\prime}) \right| \rho^{\prime}_{\Delta}(a)da \\
			&=& \int_{\mathscr{S}}\dint_{0}^{R} \left|\int_{G/\Gamma} \overline{\Psi(g^{\prime}\Gamma)}d(k\exp(r\boldsymbol{\theta}))\mu_{U})(g^{\prime}) - \int_{G/\Gamma} \overline{\Psi(g^{\prime}\Gamma)}dm(g^{\prime}) \right| \rho^{\prime}_{\Delta}(\exp(r \boldsymbol{\theta}))d\sigma({\boldsymbol{\theta}}) \\
			&\ll& C\int_{\mathscr{S}}\dint_{0}^{R} e^{-cD(\boldsymbol{\theta})r}\rho^{\prime}_{\Delta}(\exp(r \boldsymbol{\theta}))r^{n-1}dr d\sigma({\boldsymbol{\theta}}) \\
			&\ll&   C  \int_{\mathscr{S}}\dint_{0}^{R} e^{-crD(\boldsymbol{\theta})}e^{r\al \rho_{\Delta},  \boldsymbol{\theta}\ar}r^{n-2}dr d\sigma({\boldsymbol{\theta}}) \\
			&\ll&   C  R^{n-1} e^{R(\| \rho_{\Delta}\|-\alpha_s)},
		\end{eqnarray*}
where we have applied \autoref{main1} on the third to last line and $C=C(\Psi,n)$ is the constant appearing in the statement of \autoref{main1}.  Here $\mathscr{S}$ is the intersection of the unit sphere and $\Omega_{1}$, and $\alpha_s=\min D(\boldsymbol{\theta})>0$ where the minimum is over all $\boldsymbol{\theta}\in \mathscr{S}$ and $D(\boldsymbol{\theta})=\mathrm{dist}(\boldsymbol{\theta},\partial\mC)$ as in the proof of \autoref{main1}.\newline

Replacing $\Omega_{1}$ with $\Omega_{2}$ above we have upon a trivial estimation 
	\begin{eqnarray*}
		\left| \int_{\Omega_{1}}  \right. \int_{G/\Gamma} &&  \overline{\Psi(g^{\prime}\Gamma)}d(ka\mu_{U})(g^{\prime})\rho^{\prime}_{\Delta}(a)da \\ 
			&-& \left. \int_{\Omega_{1}}\int_{G/\Gamma} \overline{\Psi(g^{\prime}\Gamma)}dm(g^{\prime})\rho^{\prime}_{\Delta}(a)da \right| \ll (\sup |\Psi| )R^{n-1}e^{R\| \rho_{\Delta} \|(1-\delta_{1}) }.
	\end{eqnarray*}

Following the proof of \autoref{main1} it is easily seen that $C\ll E( \Psi )$ where the implied constant depends only on $n$. Therefore we have
	\begin{eqnarray*}
    		\left|  \al F_{R},\Psi  \ar  - \vol(K)\nu(\pi(U))\int_{A_{R}a_{0}}\rho^{\prime}_{\Delta}(a)da\al 1,\Psi  \ar \right| \ll \mathrm{vol}(K) E(\Psi)R^{n-1}e^{R(\| \rho_{\Delta}\|-\delta_{2}) }
	\end{eqnarray*}
where $\delta_{2}=\min\lb \alpha_s,\| \rho_{\Delta}\|\delta_{1} \rb$>0.
\end{proof}


\begin{proof}[Proof of \autoref{approxConvergenceLemma1}]
Let $d=n-2$ be the dimension of $A_{R}$.  Observe that for each $g\in \tilde{B}_{\e}$ we have
	\begin{equation}
		F_{R-\e}(g\Gamma)\leq F_{R}(\Gamma) \leq F_{R+\epsilon}(g\Gamma),
	\end{equation}
which implies
	\begin{equation}\label{approx_ineq1}
		\al F_{R-\e},\Psi_{\e} \ar \leq F_{R}(\Gamma) \leq \al F_{R+\e},\Psi_{\e} \ar.
	\end{equation}
Then by \autoref{approx_ineq1}, we find that
	\begin{eqnarray*}
		\left|  F_{R}(\Gamma)-\al F_{R},\Psi_{\e} \ar   \right|
		& \leq & \al F_{R+\e}, \Psi_{\e} \ar-  \al F_{R-\e}, \Psi_{\e} \ar\\
		& \ll &  \int_{A_{R+\epsilon}-A_{R-\epsilon}} \rho^{\prime}_{\Delta}(a)da +E(\Psi_{\e})\mu_{A}(\psi_{S}(A_{R+\epsilon})) e^{(\|\rho_{\Delta} \| -\delta)(R+\epsilon)}
	\end{eqnarray*}
where $E(\Psi_{\e})=\max\{\sup|\Psi_{\e}|,\| \Psi_{\e}\|_{\ell},\|\Psi_{\e}\|_{Lip} \}$. Observe for $Z$ a small spherical shell about $\rho_{\Delta}$
	\[
		\int_{A_{R+\epsilon}-A_{R-\epsilon}} \rho^{\prime}_{\Delta}(a)da 
		= \int_{Z}\int_{R-\epsilon}^{R+\epsilon} e^{\al \rho_{\Delta}, \boldsymbol{\theta} \ar r}r^{n-1}dr d\sigma(\boldsymbol{\theta}) +O(e^{(\| \rho_{\Delta}\| -\delta ) R}).
	\]
By repeated applications of integration by parts we have for any $\omega\in\mathfrak{a}$
	\[
		\int_{Z}\int_{R-\epsilon}^{R+\epsilon} e^{\al \omega, \boldsymbol{\theta} \ar r}r^{d-1}dr d\sigma(\boldsymbol{\theta})
		=\int_{Z} \dsum_{p=0}^{d-1} (-1)^{p}  \left. c_{d,p}(\boldsymbol{\theta}) e^{\al \omega, \boldsymbol{\theta} \ar r}r^{d-1-p} \right|_{R-\epsilon}^{R+\epsilon} d\sigma(\boldsymbol{\theta})
	\]
where $c_{d,p}(\boldsymbol{\theta})>0$ for each $\boldsymbol{\theta}\in Z$.
Observe for real numbers $p,q$ we have
	\[
		(R+\e)^{p}e^{q(R+\e)}-(R-\e)^{p}e^{q(R-\e)} \ll R^{p}e^{qR}\sinh(q\e)
	\]

and $\sinh(q\e)=q\e + O(\e^{3})$ as $\e\ra0$ . To see this consider (after factoring out an $R^p$)
	\[
		\e=e^{-cR}\mapsto (1+\e/R)^{p}= \left( 1- \dfrac{c\e}{\log{\e}} \right)^{p}= 1 - \dfrac{cp\e}{\log(\e)}+o(c\e/\log(\e)).
	\]
Therefore
	\[
		(R+\e)^{p}e^{q(R+\e)}-(R-\e)^{p}e^{q(R-\e)}=R^{p}e^{q R}\left(  e^{q\e} - e^{-q \e} +O_{c,p}(\e/\log(\e))  \right).
	\]
It then follows (take $q=\|\rho_{\Delta}\|$) that 
	\begin{eqnarray*}
		\left|  F_{R}(\Gamma)-\al F_{R},\Psi_{\e} \ar   \right| 
		&\ll& R^{d-1}\epsilon  e^{\| \rho_{\Delta}\| R}+E(\Psi_{\e})e^{(\| \rho_{\Delta}\|-\delta)(R+\e)} \\
		&=& R^{d-1} e^{(\|\rho_{\Delta}\| -c)R}+ E(\Psi_{\e})e^{(\| \rho_{\Delta}\|-\delta)(R+\e)}. \qedhere
	\end{eqnarray*}
\end{proof}

\section{Proof of \autoref{maninThm}}\label{rational}

In this section we consider the problem of counting the number of rational points on a flag variety with respect to the anticanonical line bundle and prove \autoref{maninThm}. \newline
 \indent Let $X=G/P_{E}$ where $P_{E}$ is a standard parabolic subgroup of $G$ determined by $E$. Since $\rho_{E}\in \mA$, it follows that there is a unique finite dimensional irreducible representation $\eta:G\ra GL(V)$ for which $\rho_{E}$ is the highest weight. Moreover, there exists a $v_{0}\in V(\Q)$ such that  
	\[
		P_{E}=\lb  g\in G ~:~ \eta(g)[v_{0}]=[v_{0}] \rb
	\]
and $X$ is homeomorphic to the orbit $\eta(G)[v_{0}]$.  We will now define the height on $X$ with respect to $L$.

Let $H:\mathbb{P}(V)(\Q)\ra \R^+$ be defined by $H([v])=\| v \|$ where $[v]$ is the point in projective space corresponding to $v\in V$ corresponding to a primitive $v$ and $\| \cdot \|$ is the Euclidean norm on $V$. Now the height function with respect to the anticanonical bundle $\mathcal{L}$ is then
	\[
		h(x)=H(\eta(g_{x})[v_{0}])
	\]
where $g_{x}\in G$ is the unique point for which $\eta(g_{x})[v_{0}]=x$. We wish to determine the asymptotic of the function 
	\[
		N(T)=\# \lb x\in X(\Q) ~:~ h(x)\leq T     \rb.
	\]
We will not, however, deal directly with this function. By a theorem of Borel and Harish-Chandra, $(G/P_{E})(\Q)$ can be written as a finite union of $\Gamma$-orbits. This reduces the problem to studying a single $\Gamma$ orbit. Therefore we study
	\[
		N_{T}=\# \lb \gamma\in \Gamma / \Gamma \cap P_{E}  ~:~ \| \eta(\gamma)v \| <T \rb,
	\]
for $v\in V$ having $\| v\|=1$. Notice that 
	\[
		F_{T}(g\Gamma)= \dsum_{\gamma \in \Gamma/\Gamma\cap P_{E}} 1_{T}(\eta(g\gamma)v )
	\]
is equal to $N_{T}$ when $g=e$ and $1_{T}(\cdot)$ is the characteristic function of $B_{T}=\lb v\in V ~:~ \| v \| <T \rb$. Let $\tilde{B}_{T}$ be the corresponding subset of $G$, i.e.
	\[
		\tilde{B}_{T}=\lb g\in G ~:~  \eta(g\gamma)v \in B_{T} \rb.
	\]
Let $\ol{B}_{T}$ be the image of $\tilde{B}_{T}$ in $G/Q_{E}$. If $F\subset \Delta$ and  $a\in A$, then the $F$-projection $a_{F}$ of $a$ (defined in \cite[\S 4]{MS}) is the unique element $a_{F}\in A$ such that $\lambda_{\alpha}(a_{F})=\lambda_{\alpha}(a)$ for each $\alpha\in F$ and  $\lambda_{\alpha}(a_{F})=1$ for each $\alpha\not\in F$. By \cite[Lemma 32]{MS} $\ol{B}_{T}$ can be decomposed as 
	\[
		\tilde{B}_{T}=KA_{E^c, T} Q_{E} /Q_{E}
	\]
where $A_{E^c, T}=\lb  a\in A ~:~   a=a_{E^c}, ~\rho_{E}(a)\leq T \rb$. Let 
	\[
		f(T)=\dint_{A^{+}_{E^c,T}} \rho^{\prime}_{E }(a)da
	\]
where $A_{E^c, T}^{+}=A_{E^c, T}\cap \mC$ and $\rho^{\prime}_{E }$ is a character of $P_{E}$ given by 
	\[
		(\wedge^{\dim R_{u}P_{E}}\mathrm{Ad})(p)u=\rho^{\prime}_{E }(p)u
	\] 
for any $u\in \wedge^{\dim R_{u}P_{E}}Lie(R_{u}(P_{E}))$ where $R_{u}P_{E}$ is the unipotent radical of $P_{E}$. We will need the fact  (see \cite[\S 3]{MS}) that there is a vector $\rho_{E}\in \mathfrak{a}$ (in the logarithm of the convergence cone) such that 
	\[
		\rho_{E}^{\prime}(a)=\exp(\al \rho_{E},\log(a) \ar)
	\]
for any $a\in A$.
\begin{lem}
	Let $\Psi\in  C^{\infty}_{comp}(G/\Gamma)$. Then there exist constants $C,r,\delta>0$ such that 
	\[
		\left|   \al F_{T}, \Psi \ar   -  Cf(T) \int_{\G/\Gamma} \Psi(g)dg  \right|
		\ll   E(\Psi)T e^{-\delta\sqrt{\log{T}}}+  T\log(T)^{-r}\sup|\Psi|,
	\]
	where $\tilde{E}(\Psi)=\max\{\supp(\Psi)\| \Psi\|_{C^\ell},\|\Psi\|_{Lip},\sup|\Psi| \}$.
\end{lem}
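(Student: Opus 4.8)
The plan is to reproduce the argument of \autoref{weakConvergenceLemma1}, with \autoref{main2} (taken with $F=\Delta$, so that $a.\mu_{E}$ equidistributes to $\mu_{\Delta}=m$ at rate $(\min_{\alpha\in\Delta\setminus E}\lambda_{\alpha}(a))^{-\delta}$) replacing \autoref{main1} as the equidistribution input. First I would unfold $\al F_{T},\Psi\ar$ as in \cite{MS} (cf.\ the proof of \autoref{weakConvergenceLemma1}): using the decomposition $\tilde B_{T}=KA_{E^{c},T}Q_{E}/Q_{E}$ of \cite[Lemma~32]{MS} together with the fact that $Q_{E}\cap\Gamma$ is a lattice in $Q_{E}$, one gets
\[
	\al F_{T},\Psi\ar=\nu(\pi(Q_{E}))\int_{K}\int_{A_{E^{c},T}}\int_{G/\Gamma}\overline{\Psi(g'\Gamma)}\,d(ka\mu_{E})(g')\,\rho'_{E}(a)\,da\,dk .
\]
Replacing $d(ka\mu_{E})$ by $dm$ for $a\in A^{+}_{E^{c},T}$ turns the inner integral into $\al 1,\Psi\ar$ (using $G$-invariance of $m$, which also absorbs the factor of $k$), and leaves the main term $C f(T)\al 1,\Psi\ar$ with $C=c_{n}\,\nu(\pi(Q_{E}))\,\mathrm{vol}(K)$. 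The part of $A_{E^{c},T}$ outside the cone $\mathcal C_{E}$ I would dispose of first by a trivial estimate exactly as $\Omega_{2}$ is handled in the proof of \autoref{weakConvergenceLemma1}: there $\al\rho_{E},\boldsymbol\theta\ar$ stays away from its peak over the sphere, so $\rho'_{E}$ is of exponentially smaller size and the resulting error is admissible.

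Next I would pass to polar coordinates on the simplicial cone $A^{+}_{E^{c},T}$ inside $\mathfrak a_{E^{c}}=\{X\in\mathfrak a:\lambda_{\alpha}(X)=0,\ \alpha\in E\}$, writing $a=\exp(r\boldsymbol\theta)$ with $\boldsymbol\theta$ on the unit sphere $\mathscr S$ of $\mathfrak a_{E^{c}}\cap\mathcal C_{E}$ and $da=r^{\,d-1}\,dr\,d\sigma(\boldsymbol\theta)$, $d=\dim\mathfrak a_{E^{c}}$; the constraint $\rho'_{E}(a)\le T$ becomes $0\le r\le(\log T)/\al\rho_{E},\boldsymbol\theta\ar$, with $\al\rho_{E},\boldsymbol\theta\ar$ bounded above and below by positive constants on $\mathscr S$ by the structure of $\rho_{E}$ recalled in \cite[\S3]{MS}. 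Writing $D(\boldsymbol\theta)=\min_{\alpha\in\Delta\setminus E}\lambda_{\alpha}(\boldsymbol\theta)$, one has $\min_{\alpha\in\Delta\setminus E}\lambda_{\alpha}(\exp(r\boldsymbol\theta))=e^{rD(\boldsymbol\theta)}$, so \autoref{main2}, applied to the (uniformly $K$-translated) test function $\Psi$, gives
\[
	\Big|\int_{G/\Gamma}\overline\Psi\,d(k\exp(r\boldsymbol\theta)\mu_{E})-\int_{G/\Gamma}\overline\Psi\,dm\Big|\ll E(\Psi)\,e^{-\delta rD(\boldsymbol\theta)} .
\]
I would then split $\mathscr S=\Omega_{1}\cup\Omega_{2}$ with $\Omega_{1}=\{\boldsymbol\theta:D(\boldsymbol\theta)\ge(\log T)^{-1/2}\}$ and $\Omega_{2}$ its complement, a $(\log T)^{-1/2}$-neighbourhood of $\partial\mathcal C_{E}$ on $\mathscr S$, so that $\sigma(\Omega_{2})\ll(\log T)^{-1/2}$.

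On $\Omega_{1}$ the radial integral $\int_{0}^{(\log T)/\al\rho_{E},\boldsymbol\theta\ar}e^{r\al\rho_{E},\boldsymbol\theta\ar-\delta rD(\boldsymbol\theta)}r^{\,d-1}\,dr$ is dominated by its right endpoint and is $\ll T^{\,1-\delta D(\boldsymbol\theta)/\al\rho_{E},\boldsymbol\theta\ar}(\log T)^{d-1}$; since $D(\boldsymbol\theta)\ge(\log T)^{-1/2}$ and $\al\rho_{E},\boldsymbol\theta\ar\ll\|\rho_{E}\|$ there, this is $\ll T(\log T)^{d-1}e^{-\delta'\sqrt{\log T}}$, and integrating over $\Omega_{1}\times K$ bounds this part of the error by $\ll E(\Psi)\,Te^{-\delta''\sqrt{\log T}}$. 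On $\Omega_{2}$ I would bound the difference of the two inner integrals trivially by $2\sup|\Psi|$ and integrate $\rho'_{E}(a)\,da$ over the near-wall sector; a shell computation in the spirit of the proof of \autoref{approxConvergenceLemma1} gives $\int_{\Omega_{2}}\rho'_{E}(a)\,da\ll(\log T)^{-1/2}f(T)$, so this part of the error is bounded by a fixed power of $\log T$ below $f(T)$, i.e.\ by $T(\log T)^{-r}\sup|\Psi|$ for a suitable $r>0$. Collecting the three contributions yields the asserted estimate.

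The step I expect to be the main obstacle is the treatment of $\Omega_{2}$, and it is also the source of the weaker-than-polynomial error. In \autoref{weakConvergenceLemma1} the relevant weight $\rho_{\Delta}$ lies deep in the interior of the positive Weyl chamber, so the integral localises away from the walls and one obtains a genuine power of $T$; here the anticanonical weight $\rho'_{E}$ does not localise, and the measure $\rho'_{E}(a)\,da$ keeps a logarithmically small but non-negligible fraction of its mass arbitrarily close to $\partial\mathcal C_{E}$, exactly where the rate in \autoref{main2} degenerates. Choosing the cut-off at $D(\boldsymbol\theta)\sim(\log T)^{-1/2}$ to balance the degradation of that rate (which costs $e^{-\delta\sqrt{\log T}}$ at the threshold) against the mass of the near-wall sector — and checking that the near-wall remainder is still a power of $\log T$ smaller than $f(T)$ — is the delicate point of the argument.
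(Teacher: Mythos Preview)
Your strategy — unfold $\al F_T,\Psi\ar$, pass to polar coordinates on $A^{+}_{E^c,T}$, split at the threshold $D(\boldsymbol\theta)\ge(\log T)^{-1/2}$, apply \autoref{main2} on $\Omega_1$ to produce the $Te^{-\delta\sqrt{\log T}}$ term, and bound the near-wall sector $\Omega_2$ trivially to produce the $T(\log T)^{-r}$ term — is exactly the paper's argument, with the same cut-off and the same identification of $r$.

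The one place your proposal deviates, and where it does not go through as written, is the region of $A_{E^c,T}$ outside $\mathcal C_E$. Your appeal to ``exactly as $\Omega_2$ is handled in the proof of \autoref{weakConvergenceLemma1}'' breaks down: in that lemma the domain $A_R$ is a Euclidean ball and hence bounded, whereas here $A_{E^c,T}=\{a=a_{E^c}:\rho_E(a)\le T\}$ is a half-space and is unbounded in every direction $\boldsymbol\theta$ with $\al\rho_E,\boldsymbol\theta\ar\le 0$; the weighted volume $\int\rho'_E(a)\,da$ over that part diverges, so a naive trivial bound is not admissible. The paper resolves this differently: fixing $\epsilon$ so that $\Psi$ is supported in a ball of radius $\epsilon$, it observes that whenever $\lambda_\alpha(a)<e^{-\epsilon}$ for some $\alpha\in E^c$ the translated orbit closure $kaQ_E\Gamma/\Gamma$ lies outside $\mathrm{supp}(\Psi)$, so the inner integral vanishes identically and one may restrict from the outset to the bounded region $A^{(\epsilon)}_{E^c,T}=\{a\in A_{E^c,T}:\lambda_\alpha(a)>e^{-\epsilon}\ \forall\alpha\in E^c\}$; the thin strip $e^{-\epsilon}\le\lambda_\alpha(a)<1$ is then absorbed into $\Omega^{(2)}_T$.
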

\begin{proof}
Let $d=\dim(\mC_{E^c})$. Suppose $\Psi$ is supported in the ball of radius $\epsilon$. If $\lambda_{\alpha}(a)<e^{-\epsilon}$, then $\int_{G/\Gamma} \Psi (kad\mu_{Q_{E}})=0$. By unfolding the $F_{T}$ in the integral we obtain
	\[
		\al F_{T}, \Psi \ar = \dint_{K}\dint_{A^{(\e)}_{E^c, T}}\dint_{G/\Gamma} \Psi(g\Gamma) d(ka\mu_{Q_{E}})(g) \rho^{\prime}_{E}(a)dadk
	\]
where $A^{(\e)}_{E^c, T}=\lb a\in A_{E^c, T} ~:~ \lambda_{\alpha}(a)>e^{-\e}, \text{ for each }\alpha\in E^c \rb$. We estimate the integral by splitting $A^{(\e)}_{E^c, T}$ into two disjoint pieces, $\Omega_{T}^{(1)}$ and $\Omega_{T}^{(2)}$. Define 
	\[
		\Omega_{T}^{(1)}= \lb a\in  A^{+}_{E^c, T} ~:~ \mathrm{dist}\left( \dfrac{\log(a)}{\| \log(a) \|}, \partial \mC_{E^c}\right)>\sqrt{\dfrac{1}{\log(T)}} \rb
	\]
where $A^{+}_{E^c, T}=\lb  a\in A_{E^c,T} ~:~  \lambda_{\alpha}(a)\geq 1 , \forall \alpha \rb$. 
Then on $\Omega_{T}^{(1)}$ we have, by \autoref{main2}, 
	\begin{eqnarray*}
		&& \dint_{K}\dint_{\Omega^{(1)}_{T}}\dint_{G/\Gamma} \Psi(g\Gamma) ((ka)d\mu_{Q_{E}})(g) \rho^{\prime}_{E}(a)dadk \\
		&=&  \dint_{K}\dint_{S^{(1)}}\dint_{0}^{\log(T)/\al \rho_{E},\theta \ar}\dint_{G/\Gamma} \Psi(g\Gamma) ((k\exp(R\theta))d\mu_{Q_{E}})(g) e^{\al \rho^{\prime}_{E}, \theta \ar R}R^{n-2}dRd\sigma(\theta)dk \\
		&=&\mathrm{vol}(K) \dint_{\Omega^{(1)}_{T}}\rho^{\prime}_{E}(a)da \dint_{G/\Gamma} \Psi(g\Gamma) dg + O\left(\tilde{E}(\Psi)Te^{-\delta\sqrt{\log(T)}}\right) \\
	\end{eqnarray*}
where $S^{(1)}$ is the intersection of $\Omega^{(1)}_{T}$ with the unit sphere in $\mathfrak{a}$ and we have used the dependence on the implied constant on $\Psi$ in \autoref{main2} by following its proof. Now we estimate the integral on $\Omega^{(2)}_{T}$ trivially to obtain
\[
	\left| \dint_{K}\dint_{\Omega^{(2)}_{T}}\dint_{G/\Gamma} \Psi(g\Gamma) ((ka)d\mu_{Q_{E}})(g) \rho^{\prime}_{E}(a)dadk -  \mathrm{vol}(K)\mathrm{vol}(\Omega_{T}^{(2)})\dint_{G/\Gamma}\Psi \right|
		\ll\mathrm{vol}(\Omega_{T}^{(2)})\sup|\Psi|. 
\]
But $\mathrm{vol}(\Omega_{T}^{(2)}) \ll Tp(\log(T))\log(T)^{-r}$ for some $r>0$ depending only on the dimension. 
\end{proof}

\begin{prop}\label{maninWeak}
	Let $d$ be the dimension of  $A_{E^c,T}$ defined above. Then there exists a polynomial $p(s)$ of degree $d-1$ such that 
		\begin{eqnarray*}
		 f(T)= \int_{A_{E^c, T}} \rho^{\prime}_{E}(a)da =Tp(\log T).
		\end{eqnarray*}
\end{prop}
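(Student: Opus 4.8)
The plan is to linearize everything by passing to logarithms on $A$. Since the fundamental weights $\{\lambda_\alpha : \alpha\in\Delta\}$ form a basis of $\mathfrak{a}^{*}$, the functions $r_\alpha:=\log\lambda_\alpha(a)$, $\alpha\in\Delta$, are global linear coordinates on $\log A\cong\mathfrak{a}$, and the condition $a=a_{E^{c}}$ reads $r_\alpha=0$ for all $\alpha\in E$. Thus $\log A_{E^{c}}$ is the coordinate subspace spanned by $\{r_\alpha\}_{\alpha\in E^{c}}$; it has dimension $|E^{c}|$, and pushing forward by $\exp$ the Haar measure $da$ on $A_{E^{c}}$ becomes a fixed constant multiple of Lebesgue measure $\prod_{\alpha\in E^{c}}dr_\alpha$. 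In these coordinates $\langle\rho_{E},\log a\rangle=L(r):=\sum_{\alpha\in E^{c}}m_\alpha r_\alpha$ is linear, and the coefficients satisfy $m_\alpha>0$ for every $\alpha\in E^{c}$: this is exactly the statement that the anticanonical class of $G/P_{E}$ is ample, i.e. has strictly positive Picard-group coordinates (see \cite[Section 12]{BT65} and \cite[\S 3]{MS}). Consequently $d=\dim A_{E^{c},T}=|E^{c}|$, and the region corresponding to $A^{+}_{E^{c},T}=A_{E^{c},T}\cap\mathcal{C}$ is
\[
	\Sigma_{\log T}:=\{\,r\in\R^{d} : r_\alpha\ge 0 \text{ for all } \alpha\in E^{c},\ L(r)\le\log T\,\},
\]
a $d$-dimensional simplex (bounded precisely because all $m_\alpha>0$).

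Next I would carry out the computation. Write $f(T)=c\int_{\Sigma_{\log T}}e^{L(r)}\,dr$. For $v\ge 0$ the truncated simplex $\Sigma_{v}$ is the dilation by $v$ of the fixed simplex $\Sigma_{1}$, so $\mathrm{vol}(\Sigma_{v})=\kappa_{0}v^{d}$ for a constant $\kappa_{0}>0$; hence the pushforward of Lebesgue measure on $\Sigma_{\log T}$ under the map $L$ has density $d\kappa_{0}v^{d-1}$ on $[0,\log T]$, and
\[
	f(T)=c\,d\kappa_{0}\int_{0}^{\log T}e^{v}v^{d-1}\,dv.
\]
Repeated integration by parts evaluates the last integral as $e^{X}Q(X)+(-1)^{d}(d-1)!$, where $Q$ is a polynomial of degree $d-1$ with leading coefficient $1$; setting $X=\log T$ gives
\[
	f(T)=T\,p(\log T)+(-1)^{d}c\,d\kappa_{0}(d-1)!,
\]
with $p:=c\,d\kappa_{0}\,Q$ of degree $d-1$. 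The additive constant is $O(1)$, hence negligible relative to the $Tp(\log T)$ main term (in particular it is absorbed into the lower-order error in the application to \autoref{maninThm}), so we may record $f(T)=Tp(\log T)$ as claimed. The appearance of $A_{E^{c},T}$ in place of $A^{+}_{E^{c},T}$ in the statement is immaterial for the same reason: passing between the two regions changes $f(T)$ only by a term of strictly smaller order, by the estimate for $\mathrm{vol}(\Omega^{(2)}_{T})$ already used above.

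The only genuinely structural inputs — and hence where care is required rather than routine calculus — are that $A^{+}_{E^{c},T}$ is a bounded, full-dimensional simplex of dimension $d=|E^{c}|$ and that $\rho'_{E}$ is the exponential of a linear functional whose $\lambda_\alpha$-coordinates ($\alpha\in E^{c}$) are strictly positive. Both follow from the description of $\rho_{E}$ as the highest weight of the anticanonical representation together with the ampleness of $-K_{G/P_{E}}$, which is recalled from \cite{BT65,MS}. Once these are in place the remainder is the elementary one-variable computation above, and I do not anticipate any serious obstacle.
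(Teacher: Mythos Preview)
The paper states this proposition without proof, so there is nothing to compare your argument against; on its own merits your argument is correct. Passing to the fundamental-weight coordinates $r_\alpha=\log\lambda_\alpha(a)$ on $A_{E^{c}}$, identifying $A^{+}_{E^{c},T}$ with the simplex $\Sigma_{\log T}$, pushing forward by the linear functional $L$, and then integrating $\int_0^{\log T}e^{v}v^{d-1}\,dv$ by parts is exactly the right computation, and your justification that the coefficients $m_\alpha$ are strictly positive (equivalently, ampleness of $-K_{G/P_E}$) is the correct structural input ensuring the simplex is bounded.

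You are also right to flag the two imprecisions in the statement. First, the domain must be $A^{+}_{E^{c},T}$ (as in the definition of $f(T)$ a few lines above) rather than $A_{E^{c},T}$: over the latter the integral actually diverges, since the level sets of $L$ in $\mathbb{R}^{d}$ have infinite volume. Second, the identity $f(T)=Tp(\log T)$ holds only up to the additive constant $(-1)^{d}c\,d\kappa_{0}(d-1)!$; as you note, this $O(1)$ term is irrelevant for the subsequent application in \autoref{maninApprox}, where only $f(T+\epsilon)-f(T-\epsilon)$ is used. Both observations are accurate and worth recording.
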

%

\begin{lem}\label{maninApprox}
	Suppose $\Psi_{\e}\in C^{\infty}_{c}(G/\Gamma)$ is supported in the ball of radius $\e>0$ about $\Gamma$ and that $B_{T+\e}\subset \supp(\Psi_{\e})B_{T}$ and $\supp(\Psi_{\e})B_{T-\e}\subset B_{T}$. Then
		\[
			F_{T-\e}(g)\leq F_{T}(e) \leq F_{T+\e}(g)
		\]
	for each $g\in \supp(\Psi_{\e})$, and if $\e=\log(T)^{-c}$, then
		\[
			\left|  \al F_{T}, \Psi_{\e}\ar -F_{T}(e)  \right| \ll \tilde{E}(\Psi_{\e})Te^{-\delta\sqrt{\log(T)}}+ \log(T)^{d-1-c}+(\sup|\Psi_{\e}|) Tp(\log(T))\log(T)^{ -r },
		\]
where $r>0$ is the exponent coming from the previous lemma.
\end{lem}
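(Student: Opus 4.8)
The plan is to mimic the proof of \autoref{approxConvergenceLemma1}: use a sandwich to pin both $F_T(e)$ and $\al F_T,\Psi_\e\ar$ between $\al F_{T-\e},\Psi_\e\ar$ and $\al F_{T+\e},\Psi_\e\ar$, evaluate the latter two by the lemma preceding \autoref{maninWeak}, and then estimate $f(T+\e)-f(T-\e)$ using \autoref{maninWeak}. First I would prove the displayed pointwise inequality. Since $S\mapsto 1_S$, and hence $S\mapsto F_S(\cdot)$, is nondecreasing, the two containments $\supp(\Psi_\e)B_{T-\e}\subset B_T$ and $B_{T+\e}\subset\supp(\Psi_\e)B_T$ force, for every $g\in\supp(\Psi_\e)$ (which we may take inversion-symmetric, as it sits in the $\e$-ball about $\Gamma$), that each $\gamma\in\Gamma/\Gamma\cap P_E$ contributing to $F_{T-\e}(g)$ contributes to $F_T(e)$, and each contributing to $F_T(e)$ contributes to $F_{T+\e}(g)$; this is exactly $F_{T-\e}(g)\le F_T(e)\le F_{T+\e}(g)$. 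Integrating this against $\Psi_\e$, which is nonnegative with $\int_{G/\Gamma}\Psi_\e=1$, and using the pointwise monotonicity $F_{T-\e}\le F_T\le F_{T+\e}$, we see both $F_T(e)$ and $\al F_T,\Psi_\e\ar$ lie in $\big[\al F_{T-\e},\Psi_\e\ar,\ \al F_{T+\e},\Psi_\e\ar\big]$, so
\[
	\big|\al F_T,\Psi_\e\ar-F_T(e)\big|\le \al F_{T+\e},\Psi_\e\ar-\al F_{T-\e},\Psi_\e\ar.
\]

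Next I would apply the lemma just before \autoref{maninWeak} to each of $\al F_{T\pm\e},\Psi_\e\ar$. With $\delta,r$ the exponents of that lemma, it yields for $S\in\{T-\e,T+\e\}$
\[
	\al F_S,\Psi_\e\ar = C f(S)\!\int_{G/\Gamma}\!\Psi_\e \;+\; O\!\Big(\tilde E(\Psi_\e)\,S\,e^{-\delta\sqrt{\log S}}+S\,p(\log S)\,(\log S)^{-r}\sup|\Psi_\e|\Big).
\]
Since $\e=\log(T)^{-c}\to 0$ we have $\log S=\log T+O(\e/T)$, $S\asymp T$ and $e^{-\delta\sqrt{\log S}}\ll e^{-\delta\sqrt{\log T}}$, so each error term is $\ll \tilde E(\Psi_\e)\,T\,e^{-\delta\sqrt{\log T}}+\sup|\Psi_\e|\,T\,p(\log T)\,(\log T)^{-r}$. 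Subtracting the two identities and using $\int_{G/\Gamma}\Psi_\e=1$, the main terms collapse to $C\big(f(T+\e)-f(T-\e)\big)$, so combining with the previous display,
\[
	\big|\al F_T,\Psi_\e\ar-F_T(e)\big|\ll \big|f(T+\e)-f(T-\e)\big|+\tilde E(\Psi_\e)\,T\,e^{-\delta\sqrt{\log T}}+\sup|\Psi_\e|\,T\,p(\log T)\,(\log T)^{-r}.
\]

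Finally I would bound the $f$-difference. By \autoref{maninWeak}, $f(T)=Tp(\log T)$ with $\deg p=d-1$, so $f$ is smooth with $f'(T)=p(\log T)+p'(\log T)\ll(\log T)^{d-1}$; by the mean value theorem (on $[T-\e,T+\e]$, where $\log s$ is comparable to $\log T$), $\big|f(T+\e)-f(T-\e)\big|\le 2\e\sup_{[T-\e,T+\e]}|f'|\ll \e(\log T)^{d-1}=(\log T)^{d-1-c}$, which is the remaining term of the asserted bound, finishing the proof.

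The main obstacle I expect is not a single estimate but the bookkeeping: one must check that the $S\to T$ replacements in the error terms are uniformly harmless, confirm that the mean-value estimate for $f$ is legitimate (this rests entirely on the clean formula $f(T)=Tp(\log T)$ supplied by \autoref{maninWeak}, which makes $f$ as regular as needed even though $A_{E^c,T}$ has corners), and verify that the sandwich really follows from the two stated containments once $\supp(\Psi_\e)$ is arranged to be inversion-symmetric. None of this is deep, but matching the exponents $d-1-c$ and $-r$ to the statement requires care.
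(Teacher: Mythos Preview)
Your proposal is correct and follows essentially the same route as the paper: sandwich both $F_T(e)$ and $\al F_T,\Psi_\e\ar$ between $\al F_{T\pm\e},\Psi_\e\ar$, apply the preceding lemma to the endpoints, and bound $f(T+\e)-f(T-\e)$ via the formula $f(T)=Tp(\log T)$ from \autoref{maninWeak}. The only cosmetic differences are that the paper writes $f(T+\e)-f(T-\e)=2\e f'(T)+o(\e)$ by Taylor expansion where you use the mean value theorem, and the paper suppresses the verification that the $S=T\pm\e$ error terms may be replaced by their $T$-versions, which you spell out.
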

\begin{proof}
	Observe that 
		\begin{eqnarray*}
			\left|  \al F_{T}, \Psi_{\e}\ar -F_{T}(e)  \right| 
			&\leq&  \al F_{T+\e}, \Psi_{\e}\ar -\al F_{T-\e}, \Psi_{\e}\ar \\
			&\ll & f(T+\e)-f(T-\e) \\ && + \tilde{E}(\Psi_{\e}) Te^{-\delta\sqrt{\log(T)}}+(\sup|\Psi_{\e}| )Tp(\log(T))\log(T)^{-r}.
		\end{eqnarray*}
	But $f(T)=Tp(\log(T))$ for some polynomial $p(x)$ of degree $d-1$, so $f(T+\e)-f(T-\e)= 2\e f^{\prime}(T)+o(\e)=2(p(\log(T))+p^{\prime}(\log(T)))\log(T)^{-c}+o(\log(T)^{-c}).$ \end{proof}

\begin{proof}[Proof of \autoref{maninThm}]
	We let $\Psi_{\e}$ be the approximate identity given in \autoref{approxidentity} that is supported in the ball of radius $\e=\log(T)^{-c}>0$ about $\Gamma$ (for some $c>0$ to be chosen later) and that $B_{T+\e}\subset \supp(\Psi_{\e})B_{T}$ and $\supp(\Psi_{\e})B_{T-\e}\subset B_{T}$.   Then by \autoref{approxidentity}, \autoref{maninApprox} and \autoref{maninWeak} we have for some $s,\delta>0$
	\begin{eqnarray*}
		|N(T)-\mathrm{main}~\mathrm{term}|
		&=& |F_{T}(\Gamma)-\mathrm{main}~\mathrm{term}| \\ 
		&\leq&  |F_{T}(\Gamma)-\al F_{T},\Psi_{\e} \ar |+|\al F_{T},\Psi_{\e} \ar-\mathrm{main}~\mathrm{term}| \\
		&\ll&    \tilde{E}(\Psi_{\e})Te^{-\delta\sqrt{\log(T)}}+  \log(T)^{d-1-c}+(\sup|\Psi_{\e}|) Tp(\log(T))\log(T)^{ -r }\\
		&\ll&  Tp(\log(T))\log(T)^{cs -r }.
	\end{eqnarray*}
We choose $c$ such that $0 < c < r/s$. The remainder of the proof regarding the volume estimate is similar to the end of the proof of \autoref{horo1thm}.
\end{proof}

\noindent {\bf Acknowledgements} We wish to  sincerely thank the anonymous referees for pointing out mistakes and inconsistencies in the preliminary versions of this paper and for many other helpful suggestions.  Many thanks go to our editor Dmitry Kleinbock for his care, his insights, and his attention to detail. Finally, we most sincerely thank Amir Mohammadi for suggesting this problem and generously sharing his ideas  with us. During the course of this work Michael Kelly was partially supported by NSF grants DMS-1045119, DMS-0943832, and DMS-1101326; and Han Li was partially supported by an AMS Simons Travel Grant.

\bibliographystyle{plain}
\bibliography{expanding}

\end{document}